\tikzstyle{vtx}=[circle, draw, fill=black, inner sep=0pt, minimum width=2pt]
\tikzstyle{every node}=[circle, draw, fill=black!50, inner sep=0pt, minimum width=4pt]
\tikzstyle{text}=[circle, draw, fill=blue!50, inner sep=0pt, minimum width=4pt]
\tikzset{middlearrow/.style={
		decoration={markings,
			mark= at position 0.5 with {\arrow[very thick]{#1}} ,
		},
		postaction={decorate}
	}
}
\newtheorem{theorem}{Theorem}[section]
\newtheorem*{theorem*}{Theorem}
\newtheorem{lemma}[theorem]{Lemma}
\newtheorem{question}[theorem]{Question}
\newtheorem{corollary}[theorem]{Corollary}
\newtheorem{conjecture}[theorem]{Conjecture}
\newtheorem{claim}[theorem]{Claim}
\newtheorem*{claim*}{Claim}
\newtheorem{definition}[theorem]{Definition}
\newtheorem{prop}[theorem]{Proposition}
\newtheorem{remark}[theorem]{Remark}
\def\N{\mathbb{N}}
\newcommand{\CP}{\mathcal{P}}
\newcommand{\CT}{\mathcal{T}}
\newcommand{\G}{\mathcal{G}}
\newcommand{\wt}{\widetilde}
\newcommand{\C}{\mathcal{C}}
\newcommand{\D}{\mathcal{D}}
\newcommand{\veps}{\varepsilon}
\newenvironment{claimproof}[1]{\par\noindent\textit{Proof of the claim:}\space#1}{\leavevmode\unskip\penalty9999 \hbox{}\nobreak\hfill\quad\hbox{$\blacksquare$}}
\title{On the Tur\'anability and tileability of oriented graphs}
\author{Igor Araujo}
\author{Zimu Xiang}
\thanks{IA: Research supported in part by the James E. and Rebecca A. Wetzel Fellowship. \\
\indent ZX: Research supported in part by the Franz Hohn and J.P. Nash Fellowship.\\
\indent Both authors supported in part by NSF RTG DMS-1937241.}
\address{Department of Mathematics, University of Illinois Urbana-Champaign, Urbana, IL 61801, USA} 
\email{\parbox[t]{\linewidth}{\{igoraa2, zimux2\}@illinois.edu}.} 
\date{}
\begin{document}

\begin{abstract}
    An oriented graph $H$ is \emph{Tur\'anable} (resp. \emph{tileable}) if there exist $n_0 \in \mathbb{N}$ such that every semi-regular near-tournament on $n \ge n_0$ vertices contains a copy of $H$ (resp. a perfect $H$-tiling). We disprove a conjectured characterization of Tur\'anable oriented graphs by DeBiasio, Han, Lo, Molla, Piga, and Treglown~\cite{DHLMPT}
    , show that there are Tur\'anable oriented graphs which are not tileable, and provide a new example of tileable oriented graph. 
\end{abstract}

\maketitle

\section{Introduction}

For graphs $H$ and $G$, an \emph{$H$-tiling} in $G$ is a collection of disjoint copies of $H$ inside $G$. A \emph{perfect $H$-tiling} is an $H$-tiling that covers all vertices of $G$. Minimum degree thresholds to ensure perfect tilings in graphs have been extensively studied since the 1960s (see, e.g., \cite{corradi} for perfect $K_3$-tiling, and \cite{hajnal} for perfect $K_r$-tiling), culminating in the celebrated result by K\"uhn and Osthus~\cite{kuhn-osthus} which provides the correct minimum degree threshold, up to an additive term, for perfect $H$-tiling for every graph $H$. 

In this paper, we are interested in the analog version of this problem for oriented graphs.
A \emph{directed graph}, or digraph, is a a set of vertices and directed edges, i.e. oriented pairs of distinct vertices. An \emph{oriented graph} is a direct graph for which for every pair of vertices $u$ and $v$ at most one of the ordered edges $uv$ or $vu$ is included. A \emph{tournament} is an oriented graph for which for every pair of vertices $u$ and $v$, exactly one of the ordered edges $uv$ or $vu$ is included.
An oriented graph is a \emph{near-tournament} if for every vertex $u$, there is at most one vertex $v$ disjoint from $u$ such that neither $uv$ or $vu$ is included.

For oriented graphs, a natural analog of minimal degree is the minimum semi-degree. The \emph{minimum semi-degree} $\delta^0(G)$ of an oriented graph $G$ is the minimum of all in- and out-degrees of the vertices of $G$. 
Note that the minimum semi-degree of an oriented graph on $n$ vertices is at most $\lfloor \frac{n-1}{2} \rfloor$. An oriented graph $G$ on $n$ vertices for which $\delta^0(G) = \lfloor \frac{n-1}{2} \rfloor$ is called a \emph{semi-regular near-tournament}. When $n$ is odd, a semi-regular near-tournament on $n$ vertices is called a \emph{regular tournament}.
It is natural to seek minimum semi-degree thresholds for perfect $H$-tiling for oriented graphs $H$.

\begin{question}\label{quest:min_degree_tiling}
    For an oriented graph $H$ on $h$ vertices, and $n \in \N$ a multiple of $h$, what is the smallest number $f(n, H)$ so that every oriented graph $G$ on $n$ vertices with $\delta^0(G) \ge f(n,H)$ contains a perfect $H$-tiling?
\end{question}

This question has a different flavor than the analog question for graphs, since for some oriented graphs $H$ there are regular tournaments avoiding even a single copy of $H$ (see, e.g., Theorem~\ref{thm:bh}).
This motivates the following definitions.

\begin{definition}[Tur\'anable] \rm
    We say that an oriented graph $H$ is \emph{Tur\'anable} if there exists $n_0 \in \N$ such that every semi-regular near-tournament on $n \ge n_0$ vertices contains a copy of $H$.
\end{definition}

\begin{definition}[Tileable] \rm
    We say that an oriented graph $H$ on $h$ vertices is \emph{tileable} if there exists $n_0 \in \N$ such that for every semi-regular near-tournament $G$ on $n \ge n_0$ vertices where $h$ divides $n$, $G$ contains a perfect $H$-tiling.
\end{definition}

With this notation, tileable oriented graphs are exactly the oriented graphs $H$ for which Question~\ref{quest:min_degree_tiling} is well-defined for sufficiently large $n$.
One could also ask an analogous equivalent to Question~\ref{quest:min_degree_tiling} for the containment of a single copy of $H$. 

\begin{question}\label{quest:min_degree_turan}
    For an oriented graph $H$ and $n \in \N$, what is the smallest number $g(n, H)$ such that every oriented graph $G$ on $n$ vertices with $\delta^0(G) \ge g(n,H)$ contains a copy of $H$?
\end{question}

We will consider the asymptotic normalized version of the minimum degree threshold given in Question~\ref{quest:min_degree_turan}. That is, we define 
\begin{equation} \label{eq:kappa}
    \kappa^0(H) \coloneqq \lim_{n \to \infty} \frac{g(n,H)}{n} .
\end{equation}
As noted in~\cite{DHLMPT}, this limit can be shown to exist with a proof similar to~\cite[Proposition 1.2]{mubayi-zhao}.

The goal of this paper is to begin the systematic study of Tur\'anable and tileable oriented graphs, aiming at finding a characterization for both classes. This might seem a bold task at first, but we note that similar results in the context of edge-ordered graphs are already known~\cite{APTX, GMNPTV}. Our first result disproves a conjectured characterization of Tur\'anable oriented graphs by DeBiasio, Han, Lo, Molla, Piga, Treglown~\cite{DHLMPT}. Before we state their conjecture, we need the following definition.

For integers $a, b, c \ge 1$, let $D_{a,b,c}$ denote the tournament on $a+b+c$ vertices obtained from the blow-up of the consistently oriented triangle $C_3$ by replacing its vertices with transitive tournaments on $a$, $b$, and $c$ vertices.
When $a=b=c=s$, we write $D_s$ to denote $D_{s,s,s}$, i.e., $D_s$ is the tournament on $3s$ vertices obtained from the $s$-blow-up of the consistently oriented triangle $C_3$ by replacing the three independent sets with transitive tournaments.

\begin{conjecture}[Conjecture 8.3 in~\cite{DHLMPT}] \label{old_conj}
    An oriented graph $H$ is Tur\'anable if and only if there is an $s \in \N$ with $H \subset D_s$.
\end{conjecture}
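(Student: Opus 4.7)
The plan is to disprove Conjecture~\ref{old_conj} by exhibiting a Tur\'anable oriented graph $H$ that is not contained in any $D_s$. First, I would confirm that the converse direction is in fact true and easy: in any regular tournament on $n$ vertices, the $\Omega(n^3)$ cyclic triangles allow one to extract an arbitrarily large blow-up of $C_3$ by supersaturation, and Erd\H{o}s--Moser inside each part produces a transitive sub-tournament, realising $D_s$ as a subgraph. So only the ``only if'' direction can fail, and the task reduces to finding a Tur\'anable $H$ outside every $D_s$.

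A clean family of candidates not in any $D_s$ is the regular tournaments $T_k$ on an odd number $k \ge 5$ of vertices: if $V(T_k) = A \sqcup B \sqcup C$ realised $T_k \subset D_s$, each part would induce a transitive sub-tournament of $T_k$, so the source of $A$ would have out-degree $(a-1)+b$, bounded by $\Delta^+(T_k) = (k-1)/2$. Summing the three symmetric inequalities $a+b, a+c, b+c \le (k+1)/2$ yields $2k \le 3(k+1)/2$, i.e.\ $k \le 3$, contradicting $k \ge 5$. Hence $T_k$ avoids every $D_s$ for each odd $k \ge 5$, and the same degree-summing argument excludes many other oriented graphs with small maximum out-degree.

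The hard step will be Tur\'anability. The naive candidate $H = T_5$ in fact fails: the composition tournament $QR_7[C_3]$ (regular on 21 vertices, with each of the seven ``$QR_7$-parts'' replaced by a cyclic triangle) contains no $T_5$, since any 5-subset either spans at most five of the seven parts and gives a 5-subtournament of $QR_7$ (a short check rules out $T_5$ inside $QR_7$) or places two vertices in one $C_3$-part, producing a within-part edge that desymmetrises their out-degrees. Similarly, the ``cyclic triangle plus dominated vertex'' tournament $H^*$ is killed by the consecutive Cayley tournaments $\mathrm{Cay}(\mathbb{Z}_n, \{1, \ldots, (n-1)/2\})$, whose in-neighborhoods are all transitive.

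The main obstacle is therefore to engineer an $H$ that (i) inherits a $T_k$-style degree obstruction to $D_s$-embedding, (ii) has no vertex with a cyclic triangle in its in- or out-neighborhood (to survive consecutive-Cayley-type tournaments), and (iii) has a small within-part footprint (to survive composition-type tournaments). I would first narrow the search by ruling out these structured families explicitly, and then verify Tur\'anability of the final candidate via a case analysis against each known family (composition, Cayley, Paley) combined with a generic Hamiltonicity/pancyclicity argument (Camion, Moon) for arbitrary regular tournaments. Isolating the right $H$---rigid enough to obstruct every $D_s$ yet flexible enough to appear in every large regular tournament---is where the real difficulty of the problem lies.
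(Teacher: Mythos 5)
Your overall plan—exhibit a Tur\'anable oriented graph $H$ with $H\not\subset D_s$ for all $s$—is the right objective, and your observation that any regular tournament on $k\geq 5$ vertices cannot embed into any $D_s$ (by the out-degree summing argument) is correct. But your search is pointed in a direction that cannot succeed, and you have actually already encountered the reason: Theorem~\ref{thm:bh} (Bollob\'as--H\"aggkvist) says that a \emph{tournament} $T$ is Tur\'anable if and only if $T\subset D_s$ for some $s$. Consequently every tournament not contained in any $D_s$—in particular every $T_k$ with $k\geq 5$, and every ``engineered'' $H$ you describe, all of which are tournaments—is automatically \emph{not} Tur\'anable. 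Your observation that $T_5$ is killed by $QR_7[C_3]$ is not a one-off obstacle to be routed around; it is a symptom of this general fact. The counterexample to Conjecture~\ref{old_conj} must be a non-tournament oriented graph, i.e.\ one with at least one non-adjacent pair, and the proposal never considers such a graph.

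The paper's actual counterexample $S$ is a $5$-vertex oriented graph with $8$ edges (so two non-adjacent pairs). The non-embeddability into $D_s$ is shown not by degree sums but by tracking cyclic triangles: $S$ contains cyclic triangles $u_1u_2u_4$ and $u_2u_3u_5$; since every cyclic triangle in $D_s$ uses one vertex from each of the three parts, any embedding forces $u_1,u_2,u_3$ into distinct parts in cyclic order, and then the edge $u_1u_3$ has the forbidden orientation from $V_1$ to $V_3$. The Tur\'anability proof is also structurally different from your proposal: rather than a case analysis against known tournament families plus a Hamiltonicity argument (which would not constitute a proof, since ``every regular tournament'' is not exhausted by the families you list), the paper starts from a copy of $D_{1,1,2}$ (which exists in every large regular tournament by Theorem~\ref{thm:bh}) with its $V_2$-vertices $u_2,u_3$, and shows by in-/out-degree counting that some vertex $v$ outside this copy satisfies $vu_2, u_3v\in E(G)$; such a $v$ extends the $D_{1,1,2}$-copy to a copy of $S$. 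This short counting argument is what makes the proof go through, and it has no analogue in your proposal.
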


\begin{theorem} \label{thm:conj_is_false}
    There exists a Tur\'anable oriented graph which is not a subgraph of $D_s$ for any $s \in \N$. 
\end{theorem}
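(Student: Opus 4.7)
The plan proceeds in three stages: a structural reformulation of the condition $H \subset D_s$, construction of a candidate oriented graph $H$, and verification that $H$ is Tur\'anable.

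First, I would prove that $H \subset D_s$ for some $s \in \N$ if and only if $V(H)$ admits a partition $V_0 \sqcup V_1 \sqcup V_2$, which I will call a \emph{cyclic acyclic $3$-partition}, such that every edge $u \to v$ with $u \in V_i$, $v \in V_j$ satisfies $j - i \in \{0, 1\} \pmod 3$, and each induced subgraph $H[V_i]$ is acyclic. The ``if'' direction is immediate: extend each $H[V_i]$ to a transitive tournament on $s = \max_i |V_i|$ vertices to obtain an embedding $H \hookrightarrow D_s$. The converse follows by intersecting the natural tripartition of $D_s$ with $V(H)$. A useful corollary is that any $H$ with dichromatic number at least $4$ automatically fails to embed in any $D_s$.

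Second, I would design $H$ so that every cyclic $3$-partition forces a monochromatic directed cycle. A natural template is a blow-up of $C_3$ whose three parts are small strongly connected oriented graphs, augmented by additional edges whose adjacency constraints eliminate alternative cyclic $3$-partitions. A finite case analysis would then confirm that no cyclic acyclic $3$-partition exists, using the fact that any directed cycle of $H$ must either sit entirely in a single class of the partition or traverse all three classes in the cyclic order.

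Third, I would argue that every regular tournament $G$ on $n$ sufficiently large vertices contains $H$. The condition $\delta^0(G) = (n-1)/2$ yields, via standard quasi-random or expansion estimates, that the common in- and out-neighborhoods of any fixed set of $k$ vertices have size $\Theta(n / 2^k)$. This supports a greedy embedding of $V(H) = \{v_1, \ldots, v_h\}$ vertex by vertex, selecting each image from the intersection of in- and out-neighborhood constraints imposed by the previously embedded vertices.

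The main obstacle is reconciling the second and third steps. Structured regular tournaments such as the rotational tournament $R_n$ lack some substructures used by the naive greedy embedding; for instance, in $R_n$ no directed cycle admits a common out-neighbor. Thus $H$ must avoid any ``cycle with dominator'' configuration while still forcing a monochromatic directed cycle in every cyclic $3$-partition. Achieving this balance between ruling out cyclic acyclic $3$-partitions and ensuring universal embeddability is where I expect the technical heart of the proof to lie.
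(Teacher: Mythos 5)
Your Step 1 reformulation of $H \subset D_s$ as the existence of a cyclic acyclic $3$-partition is correct and captures the same structural criterion the paper uses implicitly (though the paper reasons directly about where the directed triangles of its candidate graph can land in the tripartition of $D_s$). However, the proposal contains two genuine gaps. First, Step 2 never produces a concrete graph $H$: you outline a template (blow-up of $C_3$ with strongly connected parts plus extra edges) and promise a finite case analysis, but no specific oriented graph is exhibited, so nothing has actually been proved to lack a cyclic acyclic $3$-partition. Second, and more seriously, Step 3 is built on a false premise: regular tournaments are \emph{not} quasi-random, and common in-/out-neighborhoods of small vertex sets do not behave like $\Theta(n/2^k)$. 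Indeed circulant regular tournaments (e.g.\ the one in Figure~\ref{fig:tourn_T}) omit various small subtournaments entirely, which is precisely why not every oriented graph is Tur\'anable. You acknowledge this in your final paragraph but leave it unresolved, which means the main burden of the theorem---that the constructed $H$ actually appears in \emph{every} large regular tournament---has not been discharged.

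For comparison, the paper takes a much more economical route. It exhibits a concrete $5$-vertex oriented graph $S$ and shows $S \not\subset D_s$ by tracking where two of its directed triangles must be embedded (a special case of your cyclic-partition criterion). For Tur\'anability it does not attempt a greedy embedding from scratch: it starts from a copy of $D_{1,1,2}$, which is already known to be Tur\'anable by Theorem~\ref{thm:bh}, and shows that in any regular tournament one can extend this copy by a single vertex to a copy of $S$, via a short pigeonhole argument using the exact semi-degree $\frac{n-1}{2}$ ($N^-(u_2,A)$ and $N^+(u_3,A)$ are too large to be disjoint inside $A$). This ``bootstrap from a known Tur\'anable graph plus one-vertex counting extension'' is exactly the device you need to replace the unfounded quasi-randomness step.
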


We propose an updated conjecture on a characterization of Tur\'anable oriented graphs. In order to state the conjecture, we need to introduce the following notation. 
First, let $C_{k}^{\ell}$ denote $\ell$-th power of a consistently oriented cycle on $k$ vertices.
Furthermore, let $F_r$ denote the tournament with vertex set $V(F_r) = [3]^r$ where there is an edge from $u=(u_1,\dots, u_r)$ to $v = (v_1, \dots, v_r)$ if $(u_i,v_i) \in \{(1,2),(2,3),(3,1)\}$ for $i \in [r]$ being the smallest such that $u_i \neq v_i$. 

Analogously, $F_r$ can be defined iteratively as $F_1 = C_3$ and, for every $r\ge 1$, $F_{r+1}$ consists of three vertex-disjoint copies of $F_r$, say $V_1$, $V_2$, $V_3$ such that all edges between distinct copies go from $V_1$ to $V_2$, from $V_2$ to $V_3$, and from $V_3$ to $V_1$.
Note that $F_r$ is a regular tournament on $3^r$ vertices, while $C_{2k+1}^k$ is a regular tournament on $2k+1$ vertices. Conjecture~\ref{old_conj} was motivated by the following result.

\begin{theorem}[Bollob\'as, H\"{a}ggkvist~\cite{bollobas}] \label{thm:bh}
    A tournament $T$ is Tur\'anable if and only if there is an $s \in \N$ with $T \subset D_s$.\footnote{For a complete proof of Theorem~\ref{thm:bh}, see Section 8.2 in~\cite{DHLMPT}.}
\end{theorem}

However, the proof of Theorem~\ref{thm:bh} in~\cite{bollobas} explores the fact that, for a tournament $T$, there exist $r$ and $k$ such that $T \subset F_r$ and $T \subset C_{2k+1}^k$ if and only if there exists an $s$ such that $T \subset D_s$. We then suggest the following updated conjecture.

\begin{conjecture} \label{new_conj}
    An oriented graph $H$ is Tur\'anable if and only if there are integers $r, k$ such that $H \subset F_r$ and $H \subset C_{2k+1}^k$.
\end{conjecture}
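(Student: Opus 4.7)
My plan begins with the \emph{only if} direction, which is immediate. If $H$ is Tur\'anable with threshold $n_0$, then since $F_r$ is a regular tournament on $3^r$ vertices and $C_{2k+1}^k$ is a regular tournament on $2k+1$ vertices, taking $r, k$ large enough that $3^r, 2k+1 \ge n_0$ forces $H \subset F_r$ and $H \subset C_{2k+1}^k$.

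The \emph{if} direction contains essentially all the content. Fix $r, k$ such that $H \subset F_r$ and $H \subset C_{2k+1}^k$. I would aim to establish a structural dichotomy for regular tournaments: every sufficiently large regular tournament $T$ contains either a blow-up of $F_r$ or a blow-up of $C_{2k+1}^k$ large enough to embed $H$. To produce an $F_r$-like structure, the approach is, mirroring the recursive construction of $F_r$, to find a balanced $3$-partition $V(T) = V_1 \cup V_2 \cup V_3$ in which most edges between consecutive parts respect the cyclic orientation $V_1 \to V_2 \to V_3 \to V_1$, and then to recurse inside each $V_i$ (which inherits approximate regularity); iterating $r$ times yields a blow-up of $F_r$. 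To produce a $C_{2k+1}^k$-like structure, I would seek instead an ordering of $V(T)$ whose edge distribution approximates a rotational pattern, possibly via spectral or Fourier-analytic tools adapted to Cayley-type tournaments (of which the Paley tournament and $C_{2k+1}^k$ are prototypes).

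The principal obstacle is the dichotomy itself. For tournaments, Bollob\'as and H\"aggkvist~\cite{bollobas} could collapse both behaviours into a single extremal object $D_s$, which is why Theorem~\ref{thm:bh} requires only one structural condition; but Theorem~\ref{thm:conj_is_false} shows that this collapse fails for oriented graphs, since the non-edges of $H$ give enough flexibility to embed into $F_r$ or $C_{2k+1}^k$ without embedding into any $D_s$. Consequently the hard case is a regular tournament that is neither close to an iterated $C_3$-blow-up nor close to a circular pattern; ruling out such intermediate behaviour, or showing that it still accommodates $H$, is precisely where a new structural result for regular tournaments seems to be needed, likely one that goes beyond the standard tournament regularity lemma. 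A natural first step would be to verify the conjecture for the specific Tur\'anable oriented graph witnessing Theorem~\ref{thm:conj_is_false}, which would at least test whether the $F_r$--$C_{2k+1}^k$ dichotomy captures the right phenomena outside the tournament world.
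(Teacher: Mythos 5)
This statement is labeled a \emph{Conjecture} in the paper, not a theorem: the authors give no proof, and indeed propose it precisely because a proof seems out of reach. So there is no paper argument to compare against. Judged on its own merits, your proposal establishes only the easy direction and leaves the substantive one open, as you yourself acknowledge.

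The ``only if'' direction is correct: $F_r$ and $C_{2k+1}^k$ are regular tournaments, so if $H$ appears in every sufficiently large regular tournament it appears in $F_r$ and $C_{2k+1}^k$ once $3^r$ and $2k+1$ exceed the Tur\'anability threshold. This is the same observation that motivates the conjecture.

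For the ``if'' direction, the dichotomy you propose---that every large regular tournament contains either a blow-up of $F_r$ or a blow-up of $C_{2k+1}^k$ rich enough to host $H$---is unsubstantiated, and as stated it is almost certainly false. By Theorem~\ref{thm:bh}, for $r\ge 2$ the tournament $F_r$ is itself not Tur\'anable (it is not a subtournament of any $D_s$), so there are arbitrarily large regular tournaments avoiding $F_r$; similarly $C_{2k+1}^k$ is not Tur\'anable for $k\ge 2$. Random regular tournaments, for example, look like neither $F_r$ nor $C_{2k+1}^k$ at the macroscopic level, yet the conjecture requires them to contain $H$. The recursion you sketch for producing an $F_r$-like partition also has no mechanism to force the residual tournaments to remain regular or to prevent the ``reverse'' edges from accumulating across $r$ iterations, and the Fourier-analytic route to a circular pattern is only gestured at. In short, what you have written is a correct heuristic discussion of why the conjecture is hard, not a proof; the missing ingredient is exactly the structural dichotomy you name, and you have not supplied it.

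One small clarification on your closing remark: the oriented graph $S$ from Theorem~\ref{thm:conj_is_false} is already shown to be Tur\'anable in the paper (Claim~\ref{claim:S_turanable}), and Figure~\ref{fig:graph_S} records $S\subset F_2$ and $S\subset C_5^2$, so $S$ is consistent with Conjecture~\ref{new_conj} rather than a test case that still needs verifying.
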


We note that Conjecture~\ref{new_conj} is similar to Theorems~2.4 and~2.6 in~\cite{APTX}, in which the regular tournaments $F_r$ and $C_{2k+1}^k$ would be the analog to the \emph{canonical edge-orderings} in the context of oriented graphs.

The second goal of this paper is to study the differences between Tur\'anable and tileable oriented graphs. In particular, DeBiasio, Han, Lo, Molla, Piga, and Treglown asked the following question.

\begin{question}[Question 8.4 in~\cite{DHLMPT}] \label{quest:dhlmpt} 
    Is it true that an oriented graph $H$ is tileable if and only if $H$ is Tur\'anable?
\end{question}

We answer this question in the negative. It follows from Theorem~\ref{thm:bh} that the oriented graph $D_s$ is Tur\'anable for every $s \ge 1$. Our second result is the following.

\begin{theorem}\label{thm:ds} 
    For every $s\ge 2$, the oriented graph $D_s$ is not tileable.
\end{theorem}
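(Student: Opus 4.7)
The plan is to exhibit, for arbitrarily large $n$ divisible by $3s$, a semi-regular tournament $G_n$ on $n$ vertices with no perfect $D_s$-tiling. The natural construction to try is a ``blow-up'' of the cyclic triangle $C_3$: partition $V(G_n) = V_1 \sqcup V_2 \sqcup V_3$ into three parts of size $n/3$, place every cross-edge as $V_1 \to V_2 \to V_3 \to V_1$, and put inside each part $V_i$ a carefully chosen internal tournament $T_i$ so that $G_n$ is semi-regular.

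The first key step would be a structural lemma about copies of $D_s$ in such a $G_n$: by case analysis on how the three transitive classes $A, B, C$ of a copy of $D_s$ can distribute themselves among the parts (using that each class is transitive and the cross-edges are directed one way), one shows that every copy of $D_s$ in $G_n$ either (i) lies entirely within a single part $V_i$, or (ii) has each of its three classes fully contained in a distinct part $V_i$, respecting the cyclic order $V_1 \to V_2 \to V_3 \to V_1$. The analysis is short because any class that spans two parts in the ``wrong'' cyclic direction immediately contradicts either the transitivity within the class or the direction of the remaining cross-edges of the $D_s$.

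Given this lemma, ruling out a perfect $D_s$-tiling reduces to choosing the $T_i$'s so that (a) $T_i$ contains no copy of $D_s$, blocking tilings using type (i) copies, and (b) $V_i$ admits no partition into transitive $s$-subtournaments of $T_i$, blocking tilings using type (ii) copies. Together (a) and (b) preclude any perfect tiling whatsoever.

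The main obstacle is producing semi-regular $T_i$'s satisfying both (a) and (b) for all large enough $t = n/3$. For $s \ge 3$, condition (b) is genuinely restrictive and can plausibly be enforced by arranging the transitive $s$-subsets of $T_i$ to cluster in a way that prevents a disjoint partition of $V_i$. The $s = 2$ case is by far the most delicate: every pair of vertices is automatically a transitive $2$-subset, so (b) is vacuous and the pure blow-up above always admits a $D_2$-tiling (partition each $V_i$ into pairs and match pairs arbitrarily across the three parts). The $s = 2$ case therefore seems to require a genuine modification of the construction — for instance, a mild perturbation of the blow-up by reversing a small, carefully chosen family of cross-edges so that any attempted matching of pairs across $V_1, V_2, V_3$ into $D_2$-copies encounters a global parity or matching obstruction — and carrying out this modification rigorously is what I expect to be the technical heart of the proof.
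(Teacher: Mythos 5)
Your structural lemma about how a copy of $D_s$ can sit inside a balanced blow-up of $C_3$ is plausible, but the reduction you build on it cannot be closed. Both conditions (a) and (b) are impossible to satisfy once $n$ is large: for $G_n$ to be semi-regular, each internal tournament $T_i$ must itself be a semi-regular tournament on $n/3$ vertices, and since $D_s$ is Tur\'anable (Theorem~\ref{thm:bh}) every sufficiently large semi-regular tournament contains a copy of $D_s$, so (a) fails. Condition (b) fails for the same structural reason: transitive tournaments are tileable~\cite{yuster}, so for $n$ divisible by $3s$ the semi-regular tournament $T_i$ on $n/3$ vertices (a multiple of $s$) admits a perfect partition into transitive $s$-sets. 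There is therefore no choice of $T_i$ that forbids either type of copy; one must instead show that the copies that necessarily exist cannot be assembled into a perfect tiling, which calls for a different kind of argument.

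That different argument is a divisibility barrier with \emph{unbalanced} parts, and it is the key idea you are missing. The paper's construction $T(s,k)$ takes $|V_1|=s(k+1)-1$, $|V_2|=s(k+1)$, $|V_3|=s(k+1)+1$, keeps almost all cross-edges cyclic, and reverses a small matching of cross-edges to restore semi-regularity. Claims~\ref{claim:int_pat} and~\ref{claim:int_pat_dr} then show that, as long as the reversed edges form a matching avoiding a $K_{1,2}$ between two parts and a triangle with one vertex per part, every copy of $D_s$ has a constrained index vector: for $s\ge 3$ it is $(3s,0,0)$, $(0,3s,0)$, $(0,0,3s)$ or $(s,s,s)$, so a perfect tiling would force $|V_1|\equiv|V_2|\equiv|V_3|\pmod{3s}$, contradicting the chosen sizes; for $s=2$ additional vectors such as $(4,1,1)$ and $(3,3,0)$ appear and a lattice computation modulo $6$ (Claim~\ref{claim:lin_comb}) is needed. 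Your instinct that reversing a few cross-edges is part of the story and that $s=2$ is the hardest case is right, but the unbalanced part sizes and the modular/lattice bookkeeping are the essential ingredients that make the argument go through, whereas with balanced parts the divisibility obstruction evaporates entirely.
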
 

Next, we explore quantitative bounds for Question~\ref{quest:min_degree_turan}. Recall that we denote by $C_{k}^\ell$ the $\ell$-th power of the consistently oriented cycle on $k$ vertices. For $\kappa^0$ as defined in~\eqref{eq:kappa}, the authors of~\cite{DHLMPT} noted that $\kappa^0(C_{3t}^t) \ge \lfloor \frac{3t-2}{2} \rfloor \cdot \frac{1}{3t-1}$ follows from the fact that the blow-up of a semi-regular near-tournament on $3t-1$ vertices does not contain a copy of $C_{3t}^t$. They then asked whether or not this is the correct value for $\kappa^0(C_6^2)$.

\begin{question}[Question 8.7 in~\cite{DHLMPT}] 
    Is it true that $\kappa^0(C_6^2) = 2/5$?
\end{question}

We show that this is not the case.

\begin{prop} \label{prop:kappa}
    $\kappa^0(C_6^2) \ge 3/7$.
\end{prop}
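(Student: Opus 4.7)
My plan is to exhibit, for $n = 7m$, a $C_6^2$-free oriented graph on $n$ vertices with minimum semi-degree $3m$; since the limit defining $\kappa^0$ is known to exist, this yields $\kappa^0(C_6^2) \ge 3/7$.

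The construction I propose is the balanced blow-up $G = QT_7[m]$ of the Paley tournament $QT_7$ on $\mathbb{Z}/7$ (in which $i \to j$ iff $j - i \pmod 7 \in \{1,2,4\}$, the nonzero quadratic residues), where each vertex of $QT_7$ is replaced by an independent set of $m$ vertices and edges between distinct parts follow $QT_7$. Every vertex then has exactly three out-neighbor parts and three in-neighbor parts, so $\delta^0(G) = 3m = 3n/7$, which gives $g(n, C_6^2) \ge 3n/7 + 1$.

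The main task is to show $G$ is $C_6^2$-free. I would first observe that any $C_6^2 \subset G$ with vertices $v_1, \dots, v_6$ induces a homomorphism $\phi \colon C_6^2 \to QT_7$ sending $v_i$ to its part. Since $G$ has no intra-part edges, only the three antipodal pairs $\{1,4\}, \{2,5\}, \{3,6\}$ of $C_6^2$ could possibly be identified by $\phi$; but identifying, say, $\phi(1) = \phi(4)$ combines the arcs $1 \to 3$ and $3 \to 4$ of $C_6^2$ into a digon $\phi(1) \to \phi(3) \to \phi(1)$ in $QT_7$, which is impossible in a tournament. Hence $\phi$ is injective, so it suffices to show $C_6^2 \not\subset QT_7$. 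For this I would run a Cayley-style calculation: given a putative $C_6^2$ in $QT_7$ with vertices $a_1, \dots, a_6 \in \mathbb{Z}/7$ and gaps $d_i = a_{i+1} - a_i \pmod 7$, the edges of $C_6^2$ force $d_i \in \{1,2,4\}$ and $d_i + d_{i+1} \in \{1,2,4\} \pmod 7$; enumerating the six possible sums shows only $(d_i, d_{i+1}) \in \{(1,1),(2,2),(4,4)\}$ are admissible, so $d_1 = \cdots = d_6 = d$ for some $d \in \{1,2,4\}$, but then $6d \equiv 0 \pmod 7$ contradicts $\gcd(d,7) = 1$.

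I expect the only real obstacle to be identifying the right $7$-vertex regular tournament: not every regular tournament on $7$ vertices suffices, since (for instance) the Cayley tournament on $\mathbb{Z}/7$ with connection set $\{1,2,3\}$ already contains $C_6^2$ via the sequence $a_i = i - 1$ with gap sequence $(1,1,1,1,1,2)$, so the choice of the quadratic residues is essential. With $QT_7$ in hand, both verifications above are elementary.
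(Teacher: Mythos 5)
Your proposal is correct and uses exactly the construction in the paper: the balanced blow-up of the Paley tournament $QT_7$ on $\mathbb{Z}/7$ with connection set $\{1,2,4\}$, which is the tournament $T$ of Claim~\ref{claim:c62free}. The two sub-verifications are carried out differently, and both routes are sound. For $C_6^2\not\subset QT_7$, the paper traces an embedding vertex by vertex, using the rotational symmetries of $T$ and $C_6^2$ to pin $\varphi(u_1)=v_6$, and then shows $\varphi(u_5)$ would have to be the one excluded vertex $v_7$; your Cayley-style argument with the gap sequence $d_i=a_{i+1}-a_i$ is arguably cleaner, reducing the claim to the observation that $d,d'\in\{1,2,4\}$ with $d+d'\in\{1,2,4\}$ forces $d=d'$, whence all gaps coincide and $6d\equiv 0\pmod 7$ fails. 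For the blow-up step, the paper proves a general Claim~\ref{claim:blowup}: a $C_k^\ell$-free digraph stays $C_k^\ell$-free under blow-up when $2\ell+1\le k\le 4\ell+1$, because any two vertices of $C_k^\ell$ are joined by an edge or a consistently directed $2$-path; your digon argument for the three antipodal pairs of $C_6^2$ is precisely the specialization of this to $k=6$, $\ell=2$. The general version earns its keep elsewhere in the paper (it underlies the lower bound $\kappa^0(C_{3t}^t)\ge\lfloor\frac{3t-2}{2}\rfloor\cdot\frac{1}{3t-1}$ in the concluding remarks), but for the proposition at hand your specialized argument suffices. Your closing observation that the connection set $\{1,2,3\}$ fails (via the gap sequence $(1,1,1,1,1,2)$) is a nice sanity check that the quadratic-residue structure is essential and not just any regular $7$-tournament will do.
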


Our knowledge of tileability is much shorter than the one on Tur\'anability: The only graphs known to be tileable are subgraphs of transitive tournaments~\cite{yuster} and consistently oriented cycles~\cite{li-molla, wang}.
Our final result, and the main technical portion of this paper, is dedicated to adding one more example to the list of known tileable oriented graphs.

\begin{theorem} \label{thm:d112}
    The oriented graph $D_{1,1,2}$ is tileable.
\end{theorem}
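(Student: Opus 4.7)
The natural strategy for Theorem~\ref{thm:d112} is the \emph{absorbing method}. Let $D := D_{1,1,2}$. Since $D \subset D_2$, Theorem~\ref{thm:bh} tells us $D$ is Tur\'anable, and a standard supersaturation argument upgrades this to the quantitative assertion that every semi-regular tournament $T$ on $n \ge n_0$ vertices contains $\Omega(n^4)$ copies of $D$; moreover, every vertex $v \in V(T)$ lies in $\Omega(n^3)$ copies of $D$ playing each of the four roles $v$ can occupy. (The score sequence of $D$ is $(1,1,2,2)$ and the automorphism group of $D$ is trivial, so there are four genuinely distinct roles.)

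I would build \emph{composite absorbers}: because $|D| = 4$ and adjoining a single vertex to a multiple-of-$4$ set breaks divisibility, for each $v \in V(T)$ define a \emph{$v$-absorber} to be a $7$-set $S_v = X \sqcup Y$ with $|X|=4$, $|Y|=3$, $v \notin S_v$, $T[X] \cong D$, and $T[Y \cup \{v\}] \cong D$. Supersaturation supplies $\Omega(n^7)$ such absorbers for each $v$. A R\"odl--Ruci\'nski--Szemer\'edi-style probabilistic argument then extracts a family $\mathcal{F}$ of pairwise vertex-disjoint absorbers of linear size $|\mathcal{F}| = \gamma n$ (with small $\gamma > 0$ and each absorber pre-assigned a role so that the sub-families $\mathcal{F}_v = \{S \in \mathcal{F} : S \text{ is assigned as a } v\text{-absorber}\}$ are pairwise disjoint) such that $|\mathcal{F}_v| \ge \gamma' n$ for every $v \in V(T)$. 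Set $A := \bigcup_{S \in \mathcal{F}} S$ and pre-tile $A$ by placing a copy of $D$ on each $X$-part, leaving the $Y$-parts exposed.

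The main step, and the principal technical hurdle, is to show that $T - A$ admits a $D$-tiling missing only $o(n)$ vertices. The natural route is the directed regularity lemma: pass to a reduced digraph $R$ that inherits approximate semi-regularity, construct a fractional $D$-tiling of $R$ by combining the existence of a $D$-copy with an averaging argument over the near-regular structure, and invoke the directed blow-up lemma to lift this to an integer $D$-tiling of $T - A$ covering all but $o(n)$ vertices. A brief balancing step then discards $(\gamma n - |L_0|)/4$ tiles from this almost-perfect tiling, which is legitimate because $n \equiv 0 \pmod 4$ forces $|L_0| \equiv \gamma n \pmod 4$ automatically. The resulting leftover $L$ has $|L| = \gamma n$. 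By Hall's theorem applied to the bipartite graph between $L$ and $\mathcal{F}$ with edges given by the role partition --- Hall's condition is immediate since the $\mathcal{F}_v$ are disjoint with $|\mathcal{F}_v| \ge \gamma' n \ge 1$ --- each $v \in L$ is assigned a distinct $v$-absorber $S_v = X_v \sqcup Y_v$. Finally absorb by swapping, for every such $v$, the pre-placed tile on $X_v$ for the pair of tiles on $X_v$ and on $Y_v \cup \{v\}$; each swap adds $v$ and the three exposed $Y_v$-vertices to the tiling, so after all swaps every vertex of $T$ is covered.

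The dominant obstacle is the almost-tiling step. Since semi-regularity puts $\delta^0(T)$ exactly at the maximum $\lfloor (n-1)/2 \rfloor$ and $\kappa^0(D) < 1/2$ is not known, one lacks the slack that regularity-based tiling arguments typically exploit. A plausible workaround specific to $D = D_{1,1,2}$ is to exploit its balanced score sequence $(1,1,2,2)$: removing a copy of $D$ decreases in-degree and out-degree of every outside vertex by amounts that are evenly matched on average, suggesting that an iterated greedy procedure can maintain approximate semi-regularity throughout and produce an almost-perfect tiling directly, bypassing regularity altogether. Proving such a robustness-to-removal property is where the bulk of the technical work is expected to lie.
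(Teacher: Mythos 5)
Your high-level instinct --- absorbing method plus an almost-perfect tiling step --- matches the paper's architecture, but you misidentify where the difficulty lies and leave the genuine obstacle unaddressed.

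The almost-perfect tiling step is not the bottleneck. The paper's Theorem~\ref{thm:almost_perfect} obtains it routinely: apply the degree form of the diregularity lemma, pass to an oriented reduced graph with $\delta^0 \geq (1/2-c')|V(R)|$ (Lemma~\ref{lem:reduced_graph}), invoke Keevash--Sudakov (Theorem~\ref{thm:keevash}) to get a near-perfect $C_3$-tiling of $R$, and then use the blow-up lemma to turn each triangle of the reduced graph into a perfect $D_{1,1,2}$-tiling of the corresponding cluster triple. Your worry that ``one lacks the slack'' is unfounded; the Keevash--Sudakov threshold $(1/2-c')n$ provides exactly the slack needed, and $D_{1,1,2}$ is precisely the $K_{1,1,2}$-blow-up of a triangle. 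Your alternative greedy plan exploiting the score sequence is not needed and would be much harder to execute than the regularity route.

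The genuine obstacle you have missed is that the absorbing step itself can fail. For the Lo--Markstr\"om absorbing lemma (Lemma~\ref{lem:absorbing}) --- or for any absorbing argument that matches an arbitrary leftover to absorbers --- one needs every pair of vertices to be \emph{reachable}, i.e.\ $V(G)$ must be $(H_4,\beta,\ell)$-closed. Supersaturation of $D$-copies at every vertex (Lemma~\ref{lem:minimum_degree}) is not enough: if $G$ is close to the blow-up of $C_3$, vertices in different parts may have too few common linking sets, and your Hall-type assignment breaks down. The paper resolves this with a stability argument: either $V(G)$ is closed (Lemma~\ref{lem:non_ext}, where absorbing applies), or a careful merging/transferral analysis shows $G$ is $\gamma$-extremal (Lemma~\ref{lem:notclosedtoextremal}), in which case a separate combinatorial argument via Martin--Zhao's tripartite tiling theorem produces a perfect tiling directly (Lemma~\ref{lem:extremal}). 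This case split, occupying Sections~\ref{subsec:non_ext} and~\ref{subsec:ext}, is the technical heart of the proof and is entirely absent from your proposal.

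There is also a concrete bookkeeping flaw in your absorber construction: you want the sub-families $\mathcal{F}_v$ to be pairwise disjoint while also having $|\mathcal{F}_v| \geq \gamma' n$ for every one of the $n$ vertices $v$, which forces $|\mathcal{F}| \geq \gamma' n^2$ and contradicts $|\mathcal{F}| = \gamma n$. Additionally, the exposed $Y$-parts of your $7$-vertex absorbers are only covered if every absorber is activated, which requires $|L|$ to equal $|\mathcal{F}|$ exactly; the standard linking-set framework is designed to avoid precisely this fragility.
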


The remainder of the paper is organized as follows. We prove Theorem~\ref{thm:conj_is_false} in Section~\ref{sec:conj_is_false}, Theorem~\ref{thm:ds} in Section~\ref{sec:ds}, and Proposition~\ref{prop:kappa} in Section~\ref{sec:kappa}. Section~\ref{sec:d112} is devoted to proving Theorem~\ref{thm:d112}. Finally, we have some concluding remarks in Section~\ref{sec:conclusion}.

\subsection*{Notation} 
We write $[n] \coloneqq \{1,2,\dots, n\}$ for the set of natural numbers up to $n$.

Let $G$ be an oriented graph. We write $V(G)$ for its vertex set and $E(G)$ for its edge set. For $u, v \in V(G)$ we simply write $uv$ to denote the oriented edge $\overrightarrow{uv}$. We also write $uvw$ to denote the consistently oriented triangle, that is, with edges $uv$, $vw$, and $wu$. For $X \subseteq V(G)$, we denote by $G[X]$ the oriented graph induced by the set $X$. 
We denote by $N^+(v)$ the out-neighborhood of $v$, and write $N^+(v,X)=N^+(v)\cap X$ and $d^+(v,X)=|N^+(v,X)|$.
When $X=V(G)$, we may simply write $d^+(v)=d^+(v,V(G))$.
We define $N^-(v)$ as the in-neighborhood of $v$, and $N^-(v,X),d^-(v,X),d^-(v)$ similarly.
We write $N(v)=N^+(v)\cup N^-(v)$ for the neighborhood of $v$, and subsequently $d(v)=|N(v)|=d^+(v)+d^-(v)$ for the total degree of $v$.
Given a partition $\CP=(V_1,\dots,V_d)$ of $V(G)$ and a subset $U\subseteq V(G)$, the \emph{index vector} of $U$ with respect to $\CP$, denoted by $\mathbf{i}_\CP(U)$, is defined as $\mathbf{i}_\CP(U)=(|U\cap V_1|,\dots,|U\cap V_d|)$.

\section{A Tur\'anable oriented graph not contained in $D_s$} \label{sec:conj_is_false}

In this section, we will prove Theorem~\ref{thm:conj_is_false} by providing an example of an oriented graph that is Tur\'anable but not a subgraph of $D_s$ for any $s\ge 1$. We will denote by $S$ the following oriented graph on 5 vertices (see Figure~\ref{fig:graph_S}). Its vertex set is $V(S) \coloneqq \{u_1, u_2, u_3, u_4, u_5 \}$ and edge set 
\begin{equation}
    E(S) \coloneqq \{ u_1u_2, u_1u_3, u_2u_3, u_2u_4, u_3u_4, u_3u_5, u_4u_1, u_5u_2 \} .
\end{equation}

\begin{figure}
    \begin{minipage}{0.45\textwidth}
        \centering
        \begin{tikzpicture}[thick,scale=0.8] 
            \foreach \y in {1,...,5} {
                \draw[middlearrow={latex reversed}, blue] (72*\y : 3) -- (72*\y + 144 : 3);
        		}
            \draw[middlearrow={latex reversed}, blue] (0 : 3) -- (72 : 3);
            \draw[middlearrow={latex reversed}, blue] (72 : 3) -- (144 : 3);
            \draw[middlearrow={latex reversed}, blue] (144 : 3) -- (216 : 3);
        
            \draw (72*1 : 3) node [label=above:{$\scaleto{u_{3}}{8pt}$}] {};
            \draw (72*2 : 3) node [label=left:{$\scaleto{u_{2}}{8pt}$}] {};
            \draw (72*3 : 3) node [label=left:{$\scaleto{u_{1}}{8pt}$}] {};
            \draw (72*4 : 3) node [label=below:{$\scaleto{u_{5}}{8pt}$}] {};
            \draw (72*5 : 3) node [label=right:{$\scaleto{u_{4}}{8pt}$}] {};
        \end{tikzpicture}
    \end{minipage}
    \begin{minipage}{0.45\textwidth}
        \centering
        \begin{tikzpicture}[thick,scale=1] 
            \foreach \y in {1,2,3} {
                \draw[middlearrow={latex reversed}, blue] (120*\y+90 : 1) -- (120*\y-30 : 1);
                }
            \draw[middlearrow={latex}, blue] (2,-3) -- (210 : 1);
            \draw[middlearrow={latex}, blue] (2,-3) -- (330 : 1);
            \draw[middlearrow={latex}, blue] (210 : 1) -- (-2,-3);
            \draw[middlearrow={latex}, blue] (330 : 1) -- (-2,-3);
            \draw[middlearrow={latex}, blue] (-2,-3) -- (2,-3);
            
            \draw (90 : 1) node [label=above:{$\scaleto{u_{5}}{8pt}$}] {};
            \draw (210 : 1) node [label=left:{$\scaleto{u_{2}}{8pt}$}] {};
            \draw (330 : 1) node [label=right:{$\scaleto{u_{3}}{8pt}$}] {};
            \draw (-2,-3) node [label=below:{$\scaleto{u_{4}}{8pt}$}] {};
            \draw (2,-3) node [label=below:{$\scaleto{u_{1}}{8pt}$}] {};
        \end{tikzpicture}
    \end{minipage}
    \caption{Different drawings of the oriented graph $S$. Note that $S \subset C_5^2$, $S \subset F_2$, and $D_{1,1,2} \subset S$.}
    \label{fig:graph_S}
\end{figure}
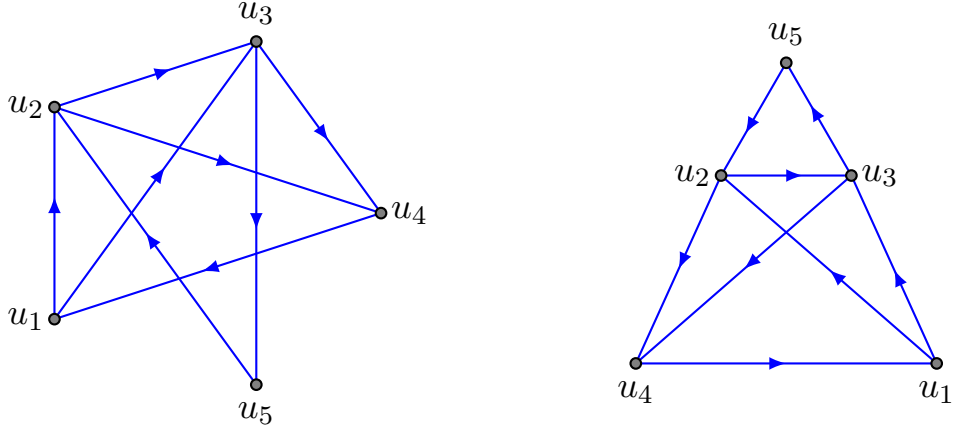

The remainder of the section is devoted to showing that the oriented graph $S$ defined above is a counterexample to Conjecture~\ref{old_conj}.

\begin{prop} \label{prop:S} 
    The oriented graph $S$ is Tur\'anable and is not a subgraph of $D_s$ for any $s \in \N$.
\end{prop}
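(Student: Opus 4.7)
The plan is to handle the two claims of Proposition~\ref{prop:S} separately.

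For the non-embedding claim, I would note that $S$ contains exactly three directed $3$-cycles: $u_1u_2u_4$, $u_1u_3u_4$, and $u_2u_3u_5$. Since each of the three classes of $D_s$ induces a transitive tournament (and hence contains no directed $3$-cycle), any embedding of $S$ into $D_s$ must send the vertices of each such cycle to distinct classes, one per class. Applying this to $u_1u_2u_4$ and $u_1u_3u_4$ forces both $u_2$ and $u_3$ into the unique class that does not contain $u_1$ or $u_4$, so $u_2$ and $u_3$ would share a class; but the cycle $u_2u_3u_5$ requires them to lie in distinct classes. This contradiction shows $S \not\subset D_s$ for any $s \in \mathbb{N}$.

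For the Tur\'anability claim, the key observation is that $S$ is obtained from $D_{1,1,2}$ by attaching one extra vertex of in- and out-degree one. Deleting $u_5$ from $S$ leaves a tournament on $\{u_1,u_2,u_3,u_4\}$ isomorphic to $D_{1,1,2}$ via $a \mapsto u_4$, $b \mapsto u_1$, $c_1 \mapsto u_2$, $c_2 \mapsto u_3$, where $\{a\}$, $\{b\}$, $\{c_1, c_2\}$ are the three classes of $D_{1,1,2}$ and $c_1 \to c_2$ is the transitive edge inside the size-two class. The vertex $u_5$ is then precisely an element of $N^+(c_2) \cap N^-(c_1)$, i.e., a vertex completing a directed $3$-cycle with the edge $c_1 \to c_2$. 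Since $D_{1,1,2} \subset D_2$, Theorem~\ref{thm:bh} implies that $D_{1,1,2}$ is Tur\'anable, so for all sufficiently large $n$ every regular tournament $T$ on $n$ vertices contains a copy of $D_{1,1,2}$ with vertices $a, b, c_1, c_2$ labelled as above. It then suffices to find $x \in V(T) \setminus \{a, b, c_1, c_2\}$ with $c_2 \to x \to c_1$.

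The main obstacle I anticipated is ensuring that the extension vertex $x$ is disjoint from the embedded $D_{1,1,2}$. It turns out to be automatic: every regular tournament on at least $3$ vertices is strongly connected (a short argument from the semi-degree condition), so by Moon's theorem the edge $c_1 \to c_2$ lies in some directed $3$-cycle, producing $x \in N^+(c_2) \cap N^-(c_1)$. Since $c_1 \to a$ forces $a \notin N^-(c_1)$ and $b \to c_2$ forces $b \notin N^+(c_2)$, any such $x$ is automatically distinct from $a$ and $b$ (and from $c_1, c_2$ trivially). The five vertices $\{a, b, c_1, c_2, x\}$ then span a copy of $S$ in $T$ under the mapping $u_1 \mapsto b$, $u_2 \mapsto c_1$, $u_3 \mapsto c_2$, $u_4 \mapsto a$, $u_5 \mapsto x$, so the whole Tur\'anability argument reduces to invoking Theorem~\ref{thm:bh} for $D_{1,1,2}$ together with Moon's theorem.
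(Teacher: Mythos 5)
Your overall strategy for both halves matches the paper's: the non-embedding argument via the cyclic triangles of $S$ is essentially the same as the paper's Claim~\ref{claim:S_not_in_Ds} (you use the two cycles through $u_1,u_4$ to force $u_2,u_3$ into a common class, the paper uses two cycles through $u_2$ plus the edge $u_1u_3$; both are correct and equivalent), and the Tur\'anability argument is the same reduction to $D_{1,1,2}$ plus an extension vertex $x$ with $c_2 \to x \to c_1$.

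However, the justification you give for the key step --- that the arc $c_1 \to c_2$ lies in a directed $3$-cycle --- is incorrect as stated. Moon's theorem asserts that every strongly connected tournament is \emph{vertex}-pancyclic, not \emph{arc}-pancyclic; strong connectivity alone does not guarantee that every arc lies on a $3$-cycle. For example, in the near-transitive tournament on $[n]$ with all arcs $i \to j$ for $i < j$ except that $n \to 1$, the tournament is strongly connected but the arc $2 \to 3$ lies on no $3$-cycle since $N^+(3) \cap N^-(2) = \{4,\dots,n\} \cap \{1\} = \emptyset$. What you actually need, and what the paper uses, is the regularity of $T$, not just strong connectivity: if $T$ is a regular tournament on $n = 2k+1$ vertices and $c_1 \to c_2$, then $|N^+(c_2)| + |N^-(c_1)| = 2k$ while both sets lie in $V(T) \setminus \{c_1,c_2\}$, a set of size $2k-1$, so they must intersect. (Alternatively one can invoke Alspach's theorem that every arc of a regular tournament is pancyclic, but the pigeonhole count is all that's needed.) Once you replace the appeal to Moon's theorem with this degree count, your disjointness argument for $x$ is correct and the proof closes, exactly mirroring the paper's Claim~\ref{claim:S_turanable}.
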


We begin with the following claim.

\begin{claim} \label{claim:S_not_in_Ds}
    The graph $S$ is not a subgraph of $D_s$ for any $s \in \N$.
\end{claim}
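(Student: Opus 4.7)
The plan is to exploit the rigid structure of $D_s$: every consistently oriented triangle in $D_s$ must use exactly one vertex from each of the three transitive blocks $A,B,C$, arranged cyclically in the order $A\to B\to C\to A$ (there are no consistently oriented triangles within a single block since each block is transitive, and no consistently oriented triangle can use only two blocks since the cross-block edges are completely one-directional between any two blocks). So any embedding of $S$ into $D_s$ induces a coloring of $V(S)$ by $\{A,B,C\}$ in which every consistently oriented triangle of $S$ is rainbow, and moreover consistent with the cyclic orientation.

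I will first locate two consistently oriented triangles in $S$ that share a vertex. Inspecting the edge list, $u_1u_2u_4$ (edges $u_1u_2, u_2u_4, u_4u_1$) and $u_2u_3u_5$ (edges $u_2u_3, u_3u_5, u_5u_2$) are both consistently oriented triangles, sharing the vertex $u_2$. By the cyclic symmetry of $D_s$ I may assume $u_1\in A$, $u_2\in B$, $u_4\in C$. The second triangle must also be rainbow and cyclically oriented, so from $u_2\in B$ together with the edge $u_2u_3$ we are forced to put $u_3\in C$, and from $u_5u_2$ we are forced to put $u_5\in A$.

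Finally, I will derive a contradiction from one of the remaining edges of $S$. The edge $u_1u_3$ belongs to $E(S)$, but under the forced assignment $u_1\in A$ and $u_3\in C$, and in $D_s$ every edge between $A$ and $C$ is oriented from $C$ to $A$. Hence $u_1u_3\notin E(D_s)$, contradicting the supposed embedding. This shows $S\not\subset D_s$ for any $s\in\N$.

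The argument is essentially a short case analysis, so I do not expect a serious obstacle; the only thing to double-check carefully is the claim that every consistently oriented triangle of $D_s$ is rainbow and cyclically ordered, which follows immediately from the definition of $D_s$ as a blow-up of $C_3$ by transitive tournaments.
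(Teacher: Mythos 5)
Your proof is correct and uses essentially the same approach as the paper: both identify the two consistently oriented triangles $u_1u_2u_4$ and $u_2u_3u_5$ sharing $u_2$, use the fact that every cyclic triangle in $D_s$ is rainbow across the three blocks to pin down the blocks of $u_1,u_2,u_3$, and then derive the contradiction from the edge $u_1u_3$ running in the forbidden direction between blocks.
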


\begin{claimproof}
    Assume by contradiction that $\varphi: V(S) \to V(D_s)$ is an embedding of $S$ into $D_s$ for some $s \in \N$. Let $v_i \coloneqq \varphi(u_i)$ for every $i \in [5]$. Let $V_1$, $V_2$, $V_3$ be the three parts of $D_s$ that induce transitive tournaments so that $v_1 \in V_1$ (and with edges pointing from $V_1$ to $V_2$). The only consistently oriented triangles in $D_s$ are those using vertices in three distinct parts of $D_s$, therefore, since $u_1u_2u_4$ and $u_2u_3u_5$ are consistently oriented triangles in $S$, we conclude that $v_1\in V_1$, $v_2 \in V_2$, and $v_3 \in V_3$. This leads to a contradiction, as the edge $v_1v_3$ cannot be from part $V_1$ to $V_3$. 
\end{claimproof}

To conclude Proposition~\ref{prop:S} we will prove the following.

\begin{claim} \label{claim:S_turanable}
    The graph $S$ is Tur\'anable.
\end{claim}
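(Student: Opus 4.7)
The plan is to exhibit $S$ as a one-vertex extension of the tournament $D_{1,1,2}$ and to invoke Theorem~\ref{thm:bh} to locate the tournament part inside a large regular tournament. The only non-edges of $S$ are $u_1 u_5$ and $u_4 u_5$, so the induced subgraph $S[\{u_1, u_2, u_3, u_4\}]$ is a tournament on four vertices; inspection of the out-degree sequence $(2,2,1,1)$ together with the orientations within each out-degree class shows that this tournament is isomorphic to $D_{1,1,2}$, with $u_4$ playing the role of the singleton $V_1$, $u_1$ that of the singleton $V_2$, and $(u_2, u_3)$ playing the transitive pair $V_3$ (directed $u_2 \to u_3$).

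Since $D_{1,1,2} \subset D_2$, Theorem~\ref{thm:bh} ensures that $D_{1,1,2}$ is Tur\'anable. Hence for all sufficiently large $n$, every regular tournament $T$ on $n$ vertices contains a copy of $D_{1,1,2} \cong S - u_5$; write $v_i$ for the image of $u_i$ under such an embedding, $i \in \{1,2,3,4\}$. To extend to a copy of $S$, it suffices to find a vertex
\[
v_5 \in (N^+(v_3) \cap N^-(v_2)) \setminus \{v_1, v_2, v_3, v_4\},
\]
so that the two remaining edges $v_3 v_5$ and $v_5 v_2$ (realizing $u_3 u_5$ and $u_5 u_2$) are present in $T$.

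A short inclusion-exclusion finishes the job. Since $v_2 v_3 \in E(T)$, neither $v_2$ nor $v_3$ lies in $N^+(v_3) \cup N^-(v_2)$, so
\[
|N^+(v_3) \cap N^-(v_2)| \ge 2 \cdot \tfrac{n-1}{2} - (n-2) = 1 .
\]
Moreover, the edges $v_1 v_3, v_2 v_4 \in E(T)$ (inherited from the embedding of $S - u_5$) force $v_1 \notin N^+(v_3)$ and $v_4 \notin N^-(v_2)$, so any vertex in $N^+(v_3) \cap N^-(v_2)$ is automatically distinct from each $v_i$ and completes the embedding. The only real content in the argument is recognizing the isomorphism $S - u_5 \cong D_{1,1,2}$; once this is established, Theorem~\ref{thm:bh} provides the tournament core and the greedy one-vertex completion is immediate, so no substantial obstacle remains.
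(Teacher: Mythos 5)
Your proof is correct and takes essentially the same approach as the paper: both recognize $S-u_5 \cong D_{1,1,2}$, invoke Theorem~\ref{thm:bh} to embed it in a large regular tournament, and then use a degree-counting/inclusion-exclusion argument on the two in/out-neighborhoods to locate the fifth vertex. Your observation that $v_1\notin N^+(v_3)$ and $v_4\notin N^-(v_2)$, so any witness is automatically outside the embedded copy, is a small but clean variant of the paper's bookkeeping, which instead restricts to $A = V(G)\setminus V(F)$ and derives a contradiction from the union bound.
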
 

\begin{claimproof}
    Let $n$ be sufficiently large and $G$ be a semi-regular near-tournament on $n$ vertices. We will assume for the sake of contradiction that $G$ does not contain a copy of $S$.
    Recall that from Theorem~\ref{thm:bh} it follows that $D_{1,1,3}$ is Tur\'anable.
    Hence, $D_{1,1,3} \subset G$. Consider a fixed copy $F$ of $D_{1,1,3}$ inside $G$ where $V(F)=\{u_0, u_1, u_2, u_3, u_4\}$ with parts $V_1 = \{u_1\}, V_2 = \{u_2, u_0, u_3\}, V_3 = \{u_4\}$, where the edges inside $V_2$ are $u_2u_0, u_0u_3, u_2u_3$, and the edges between distinct parts are oriented from $V_1$ to $V_2$, from $V_2$ to $V_3$, and from $V_3$ to $V_1$.
    
    Let $A\coloneqq V(G) \setminus V(F)$. For every vertex $v \in A$, if the edges connecting $v$ and $u_2, u_3$ have orientation $vu_2$ and $u_3v$ then $G[\{u_1, u_2, u_3, u_4, v\}]$ contains a copy of $S$ (see Figure~\ref{fig:graph_S}), a contradiction. Therefore, $N^-(u_2, A) \cap N^+(u_3, A) = \emptyset$.
        
    Note that since $G$ is a semi-regular near-tournament, then
    $$d^-(u_2) = 1 + d^-(u_2, A) \ge \frac{n-2}{2} \qquad \text{and} \qquad 
    d^+(u_3) = 1 + d^+(u_3, A) \ge \frac{n-2}{2} ,$$
    implying that
    $$ |N^-(u_2, A) \cup N^+(u_3, A)| = d^-(u_2, A) + d^+(u_3, A) \ge n-4 ,$$
    contradicting the fact $|N^-(u_2, A) \cup N^+(u_3, A)| \le |A| = n-5 < n-4$.
\end{claimproof}

\section{The (non)tileability of $D_s$} \label{sec:ds}

In this section, we will prove Theorem~\ref{thm:ds} by constructing, for each $s\ge 2$, a family of semi-regular near-tournaments which do not contain a perfect $D_s$-tiling. 
We begin by exhibiting a divisibility barrier for consistently oriented triangle tiling. 
Recall from Question~\ref{quest:min_degree_tiling} that we denote by $f(n,H)$ the minimum semi-degree threshold for an oriented graph to contain a perfect $H$-tiling.
We show that, for odd $n$, the minimum semi-degree threshold for perfect $C_3$-tiling is as large as possible, that is, $f(n,C_3) \ge \frac{n-1}{2}$. 

Let $G$ be a tournament on $3n$ vertices for which there is a partition $(V_1, V_2, V_3)$ of its vertices where $|V_1|=n-1$, $|V_2|=n$, and $|V_3|=n+1$ and all edges between different parts are oriented from $V_1$ to $V_2$, $V_2$ to $V_3$, and $V_3$ to $V_1$. 
We claim that $G$ does not contain a perfect $C_3$-tiling. Indeed, every copy of $C_3$ in $G$ is contained in a part $V_i$ or has exactly one vertex in each part. Therefore, if $\CT$ is a collection of vertex-disjoint copies of $C_3$ in $G$, then 
$|V_1 \cap V(\CT)| \equiv |V_2 \cap V(\CT)| \equiv |V_3 \cap V(\CT)| \mod 3$ 
and $\CT$ cannot be a perfect $C_3$-tiling.
Note that, by choosing the orientation of edges inside each part to correspond to semi-regular near-tournaments, we have $\delta^0(G) = \frac{3n-3}{2}$, which implies $f(3n,C_3)>\frac{3n-3}{2}$. 

The above example is almost enough to show that $C_3$ is not tileable, in the sense that we need to swap the orientation of only a few edges to make the tournament $G$ semi-regular. The obstacle is that it is not possible to reverse the orientation of the edges in $G$ and keep the property that there is no perfect $C_3$-tiling (indeed, $C_3$ is tileable~\cite{li-molla}). However, this is not an issue when we are dealing with perfect $D_s$-tilings for $s\ge 2$. By inverting the orientation of few edges in $G$ we can turn it into a semi-regular near-tournament and keep its perfect $D_s$-tiling freeness. Formally, we consider the following tournament.

For every $s\ge 2$, and every $k \ge 0$ so that $s(k+1)$ is even, we define the tournament $T=T(s,k)$ as follows. There is a partition $(V_1, V_2, V_3)$ of its vertices where $|V_1|=s(k+1)-1$, $|V_2|=s(k+1)$, and $|V_3|=s(k+1)+1$ and the induced tournament $T_i=T[V_i]$ is a semi-regular near-tournament for every $i \in [3]$. 
The edges between $V_1$ and $V_2$ are oriented from $V_1$ to $V_2$. 
The edges between $V_2$ and $V_3$ that are not oriented from $V_2$ to $V_3$ form a matching $M$ so that $$|V(M)\cap V_2|=|V(M)\cap V_3|=\frac{s(k+1)}{2},$$ and $V(M)\cap V_2$ is the set of vertices in $V_2$ with the highest out-degree in $T_2=T[V_2]$.
The edges between $V_1$ and $V_3$ are oriented from $V_3$ to $V_1$.

\begin{claim}
    For every $s\ge 2$, and every $k \ge 0$ so that $s(k+1)$ is even, the tournament $T(s,k)$ is a semi-regular near-tournament on $3s(k+1)$ vertices.
\end{claim}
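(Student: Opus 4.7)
The plan is a direct verification that every vertex $v$ of $T = T(s,k)$ satisfies $\min(d^+(v), d^-(v)) \ge \lfloor (3m-1)/2 \rfloor$, where I set $m \coloneqq s(k+1)$. Since $m$ is even by hypothesis, the total number of vertices $3m$ is also even, so $\lfloor (3m-1)/2 \rfloor = (3m-2)/2$; the claim amounts to showing that every vertex has exactly one of $d^+(v), d^-(v)$ equal to $(3m-2)/2$ and the other equal to $3m/2$.

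First I would record the rigid structure of each induced sub-tournament $T_i = T[V_i]$. The parts $V_1$ and $V_3$ have odd sizes $m-1$ and $m+1$, so $T_1$ and $T_3$ are actually regular tournaments; each $v \in V_1$ has in- and out-degree $(m-2)/2$ inside $T_1$, and each $v \in V_3$ has in- and out-degree $m/2$ inside $T_3$. The part $V_2$ has even size $m$ and $T_2$ is semi-regular, so each vertex of $V_2$ has out-degree either $m/2$ or $(m-2)/2$ inside $T_2$; a one-line degree-sum argument shows that exactly $m/2$ vertices achieve each value, and by construction the $m/2$ vertices with $d^+_{T_2}(v) = m/2$ are precisely the vertices of $V(M_1) \cap V_2$. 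Then I would split into six cases according to whether $v$ lies in $V_1, V_2, V_3$ and whether $v$ is incident to the relevant matching $M_1$ or $M_2$. In each case the computation is the sum of (i) the contribution from $T_i$, (ii) the edges fully oriented between $V_1$ and $V_2$, and (iii) the bulk out- or in-contribution to the remaining part, corrected by $\pm 1$ when $v$ is an endpoint of $M_1$ or $M_2$. In every case the pair $\{d^+(v), d^-(v)\}$ comes out to $\{(3m-2)/2,\, 3m/2\}$.

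There is no genuinely hard step: the statement is a sanity check that the matchings $M_1$ and $M_2$ have been engineered to exactly compensate for the imbalance $|V_1| = m-1 < m = |V_2| < m+1 = |V_3|$. The only place bookkeeping matters is for $v \in V_2$, where the loss of one out-edge to $V_3$ caused by $M_1$ must be balanced by an extra out-neighbor inside $T_2$; this is the sole reason the construction prescribes that $V(M_1) \cap V_2$ be the set of $m/2$ highest-out-degree vertices of $T_2$. For $V_1$ and $V_3$ no such compensation is needed because $T_1$ and $T_3$ are regular, so the arithmetic is immediate.
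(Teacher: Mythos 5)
Your proposal is correct and takes essentially the same approach as the paper: a direct, case-by-case computation of $d^+(v)$ split according to which part $v$ lies in and whether $v$ is incident to the matchings $M_1$, $M_2$. The only presentational differences are that the paper bounds the contributions from $M_1$ and $M_2$ by ``$\le 1$'' for $V_1$ and $V_3$ rather than splitting into matched/unmatched subcases, and your explicit degree-sum observation that exactly $m/2$ vertices of $T_2$ attain out-degree $m/2$ is left implicit there.
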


\begin{claimproof}
    Let $s\ge 2$, $k \ge 0$ be such that $s(k+1)$ is even and set $T=T(s,k)$. It is enough to prove that every vertex $v \in V(T)$ has out-degree $d^+(v) \in \left\{ \frac{3s(k+1)}{2}-1, \frac{3s(k+1)}{2} \right\}$. 
    For $v_1 \in V_1$, we have $d^+(v_1, V_1) = \frac{s(k+1)-2}{2}$, $d^+(v_1, V_2) = s(k+1)$, and $d^+(v_1, V_3)=0$.
    For $v_3 \in V_3$, we have $d^+(v_3, V_3) = \frac{s(k+1)}{2} $, $d^+(v_3, V_1) = s(k+1)-1$, and $d^+(v_3, V_2) \le 1$.
    For $v_2 \in V_2$, we have $d^+(v_2, V_1) = 0$.
    If $v_2 \in V(M)$, then $d^+(v_2, V_3) = s(k+1)$ and $d^+(v_2, V_2) = \frac{s(k+1)}{2}$.
    If $v_2 \not\in V(M)$, then $d^+(v_2, V_3) = s(k+1)+1$ and $d^+(v_2, V_2) = \frac{s(k+1)-2}{2}$.
    In any case, we conclude that $d^+(v) = d^+(v, V_1)+d^+(v, V_2)+d^+(v, V_3) \in \left\{ \frac{3s(k+1)}{2}-1, \frac{3s(k+1)}{2} \right\}$, as desired.
\end{claimproof}

\medskip
Theorem~\ref{thm:ds} follows from the following.

\begin{prop}\label{prop:dr}
    For every $s\ge 2$, and every $k \ge 0$ so that $s(k+1)$ is even, the tournament $T(s,k)$ does not contain a perfect $D_s$-tiling.
\end{prop}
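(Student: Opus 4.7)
The plan is to argue by contradiction: suppose $\mathcal{T}$ is a perfect $D_s$-tiling of $T=T(s,k)$. The key claim will be that every copy $D\in\mathcal{T}$ satisfies $|D\cap V_1|\equiv |D\cap V_2|\equiv |D\cap V_3|\pmod 3$. Summing this over $\mathcal{T}$ would then force $|V_1|\equiv|V_2|\equiv|V_3|\pmod 3$, contradicting the fact that $|V_1|$, $|V_2|$, $|V_3|$ are three consecutive integers.

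To establish the claim on a single copy $D$ with parts $A_1,A_2,A_3$, I would set $\alpha_i^j:=|A_i\cap V_j|$ and extract the constraints imposed by requiring all $s^2$ edges of $A_i\to A_{i+1}$ to lie in $T(s,k)$ (indices taken cyclically mod $3$). Since $T$ has no edge from $V_2$ to $V_1$, one obtains the hard constraint $\alpha_i^2\cdot\alpha_{i+1}^1=0$ for every $i$. The reversed-direction edges between the remaining pairs of $V$-parts form the matchings $M_1$ (in the direction $V_3\to V_2$) and $M_2$ (in the direction $V_1\to V_3$); since a complete bipartite block contained in a matching can have at most one edge, one also obtains the soft constraints $\alpha_i^3\cdot\alpha_{i+1}^2\le 1$ and $\alpha_i^1\cdot\alpha_{i+1}^3\le 1$.

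A case analysis on these constraints classifies every admissible copy. When $s\ge 3$, the soft constraints quickly rule out nontrivial splits, and a short chase shows each $D$ is either \emph{aligned} --- its three parts lie in three distinct $V_j$'s cyclically arranged as $V_1\to V_2\to V_3\to V_1$, giving profile $(|D\cap V_1|,|D\cap V_2|,|D\cap V_3|)=(s,s,s)$ --- or \emph{internal}, with all three parts inside a single $V_j$ and profile $(3s,0,0)$, $(0,3s,0)$, or $(0,0,3s)$. When $s=2$, a small list of \emph{mixed} copies using matching edges also appears; direct enumeration of the intersection patterns for $A_1$ and propagation of constraints cyclically around $(A_1,A_2,A_3)$ shows the only additional configurations are the copy with all three parts of profile $(1,0,1)$, the copy with all three parts of profile $(0,1,1)$, and the cyclic orbit of $((0,0,2),(1,0,1),(0,1,1))$, giving overall vertex profiles $(3,0,3)$, $(0,3,3)$, and $(1,1,4)$ respectively. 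In every case the three coordinates of the profile are visibly equal modulo $3$, which establishes the claim.

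The main obstacle is completing the case analysis for $s=2$: the soft matching constraints do not immediately forbid a part $A_i$ from splitting across multiple $V_j$'s, so one must chase through each possible intersection pattern for $A_1$ and propagate the hard and soft constraints cyclically around $(A_1,A_2,A_3)$ to rule out any further mixed configurations. Once the enumeration is complete, the contradiction reduces to the observation that three consecutive integers cannot all be congruent modulo $3$.
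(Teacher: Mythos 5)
Your plan is correct and diverges from the paper's in two ways that are worth noting. First, the paper proves its index-vector classification (Claims 3.3 and 3.4) for the more general family $\G(s)$ of oriented graphs, where reverse edges may appear between any pair of parts (subject only to a $K_{1,2}$- and triangle-freeness condition on the reverse graph), whereas you exploit the specific structure of $T(s,k)$: there are no $V_2\to V_1$ edges at all, giving the hard constraint $\alpha_i^2\alpha_{i+1}^1=0$. This is a genuinely stronger hypothesis, and it correctly eliminates some of the index vectors that the paper does retain for general $\G(2)$ --- namely $(4,1,1)$, $(1,4,1)$, and $(3,3,0)$ all require a $V_2\to V_1$ reverse edge (as in the paper's Figure 3), so they cannot occur in $T(2,k)$. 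Your seven-element list is in fact exactly the paper's ten-element list minus these three. Second, your closing step is cleaner than the paper's. The paper works modulo $|D_s|$ (modulo $6$ for $s=2$, via the lattice computation of Claim 3.4, and modulo $3s$ for $s\ge 3$), whereas you observe that in every admissible index vector the three coordinates are congruent modulo $3$, so a perfect tiling would force $|V_1|\equiv|V_2|\equiv|V_3|\pmod 3$, which is impossible for three consecutive integers. This observation is true for the paper's full $\G(2)$ list as well, and it sidesteps the lattice argument entirely. The trade-off is generality: the paper's route classifies embeddings into any member of $\G(s)$, which would survive modifications of $T(s,k)$, while your route is tied to the specific tournament and so has fewer cases but less reusability.

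The only substantive caveat is that the enumeration itself --- which you explicitly flag as the main obstacle --- is left as a plan rather than executed. It is the heart of the argument in either approach (the paper carries it out in full in Claim 3.3 and Claim 3.4), and without the propagation-through-constraints step actually being done, the proposal is a sketch. But the sketch is sound, the claimed endpoint of the enumeration is correct, and the resulting proof would be both valid and slightly shorter than the paper's.
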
 

We note that when $s(k+1)$ is odd, a similar construction also yields a regular tournament on $3s(k+1)$ vertices without perfect $D_s$-tiling. For simplicity, we omit this construction. We will show that Proposition~\ref{prop:dr} holds for a larger family of oriented graphs, which includes $T(s,k)$, but not all oriented graphs in the family are semi-regular near-tournaments. 

Given an oriented graph $G$ with a partition $\CP=(V_1,V_2,V_3)$ of its vertex set, we refer to the edges going from $V_1$ to $V_3$, $V_3$ to $V_2$, or $V_2$ to $V_1$ as \emph{reverse edges}.
Denote $G_\CP$ as the subgraph of the underlying graph of $G$, with the edge set containing only the reverse edges (without orientation) with regard to $\CP$.
We define the family $\G$ as the family of all oriented graphs for which there is a partition $\CP=(V_1,V_2,V_3)$ of its vertices such that $G_\CP$ is a matching.
Note that $T(s,k)$ belongs to $\G$ and the matching $M$ corresponds to the reverse edges of $T(s,k)$.

We begin by introducing a lemma that exploits symmetry to simplify subsequent arguments.

\begin{lemma}\label{lem:sym}
    Let $G\in \G$ and $s\ge 2$. For every permutation $\pi \in S_3$, the following holds. If $\CP=(V_1,V_2,V_3)$ is an ordered partition of $V(G)$ and $\varphi: V(D_s) \to V(G)$ is an embedding of $D_s$ into $G$ with $\mathbf{i}_\CP(\varphi(D_s))=(a_1,a_2,a_3)$, then there is some $G'\in\G$ with an ordered partition $\CP' = (V_1',V_2',V_3')$ of $V(G')$ and an embedding $\varphi': V(D_s) \to V(G')$ of $D_s$ into $G'$ with $\mathbf{i}_{\CP'}(\varphi'(D_s))=(a_{\pi(1)},a_{\pi(2)},a_{\pi(3)})$.
\end{lemma}

\begin{proof}
    First assume that $\pi(1)=2$, $\pi(2)=3$, and $\pi(3)=1$. Then we let $G'=G$, $\CP'=(V_1',V_2',V_3')$ with $V_i'=V_{\pi(i)}$ for $i\in [3]$ and $\varphi'=\varphi$.
    By definition, $G_\CP$ contains the same edge set as $G_{\CP'}$, and it follows that $\mathbf{i}_{\CP'}(\varphi'(D_s))=(a_{\pi(1)},a_{\pi(2)},a_{\pi(3)})$.
    
    Thus, we assume that $\pi(1)=1$, $\pi(2)=3$, and $\pi(3)=2$.
    Now let $G'$ be the oriented graph with all the orientations reversed as compared to $G$, a partition $\CP' = (V_1',V_2',V_3')$ with $V_1'=V_1$, $V_2'=V_3$ and $V_3'=V_2$, and finally $\varphi'=\varphi$. Note that the oriented graph obtained by reversing the orientations of all edges of $D_s$ is isomorphic to $D_s$, hence $\varphi'$ is an embedding of a copy of $D_s$ into $G'$.
    By construction, $\mathbf{i}_{\CP'}(\varphi'(D_s))=(a_{\pi(1)},a_{\pi(2)},a_{\pi(3)})$.

    Since the two permutations above generate the group $S_3$, the result follows.
\end{proof}

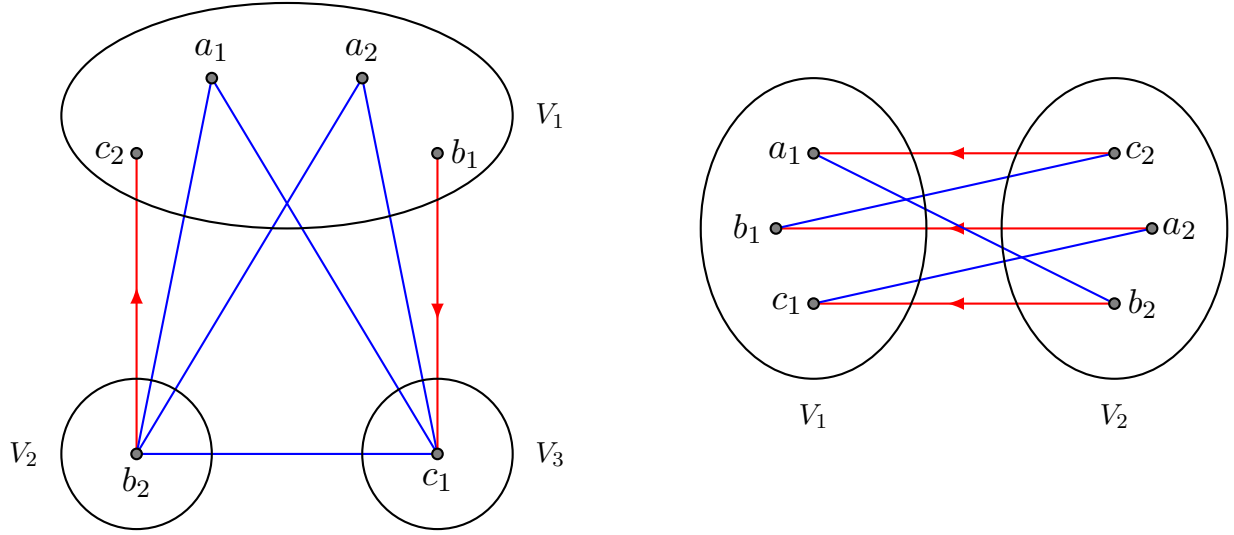
\begin{figure}
    \centering
    \begin{tikzpicture}[thick,scale=1] 
        \draw[middlearrow={latex reversed}, red] (-2,1) -- (-2,-3);
        \draw[blue] (-1,2) -- (-2,-3);
        \draw[blue] (1,2) -- (-2,-3);
        \draw[blue] (2,-3) -- (-1,2);
        \draw[blue] (2,-3) -- (1,2);
        \draw[middlearrow={latex reversed}, red] (2,-3) -- (2,1);
        \draw[blue] (-2,-3) -- (2,-3);
    
    
        \draw (0,1.5) ellipse (3 and 1.5);
            \draw (3.5,1.5) node [fill=black!0,draw=black!0] {$V_1$};
        \draw (-2,-3) ellipse (1 and 1);
            \draw (-3.5,-3) node [fill=black!0,draw=black!0] {$V_2$};
        \draw (2,-3) ellipse (1 and 1);
            \draw (3.5,-3) node [fill=black!0,draw=black!0] {$V_3$};
        
        \draw (-1,2) node [label=above:{$\scaleto{a_{1}}{8pt}$}] {};
        \draw (1,2) node [label=above:{$\scaleto{a_{2}}{8pt}$}] {};
        \draw (-2,1) node [label=left:{$\scaleto{c_{2}}{8pt}$}] {};
        \draw (2,1) node [label=right:{$\scaleto{b_{1}}{11pt}$}] {};
        \draw (-2,-3) node [label=below:{$\scaleto{b_{2}}{11pt}$}] {};
        \draw (2,-3) node [label=below:{$\scaleto{c_{1}}{8pt}$}] {};

        \begin{scope}[xshift = 9cm]
            \draw[middlearrow={latex reversed}, red] (-2,1) -- (2,1);
            \draw[middlearrow={latex reversed}, red] (-2.5,0) -- (2.5,0);
            \draw[middlearrow={latex reversed}, red] (-2,-1) -- (2,-1);
            
            \draw[blue] (-2,1) -- (2,-1);
            \draw[blue] (-2.5,0) -- (2,1);
            \draw[blue] (-2,-1) -- (2.5,0);
        
        
            \draw (-2,0) ellipse (1.5 and 2);
                \draw (-2,-2.5) node [fill=black!0,draw=black!0] {$V_1$};
            \draw (2,0) ellipse (1.5 and 2);
                \draw (2,-2.5) node [fill=black!0,draw=black!0] {$V_2$};
    
            \draw (-2,1) node [label=left:{$\scaleto{a_{1}}{8pt}$}] {};
            \draw (-2.5,0) node [label=left:{$\scaleto{b_{1}}{11pt}$}] {};
            \draw (-2,-1) node [label=left:{$\scaleto{c_{1}}{8pt}$}] {};
            \draw (2,1) node [label=right:{$\scaleto{c_{2}}{8pt}$}] {};
            \draw (2.5,0) node [label=right:{$\scaleto{a_{2}}{8pt}$}] {};
            \draw (2,-1) node [label=right:{$\scaleto{b_{2}}{11pt}$}] {};
        \end{scope}
    \end{tikzpicture}
    \caption{Copies of $D_2$ inside oriented graphs from $\G$ with index vector $(4,1,1)$ and $(3,3,0)$, respectively. We identify the parts of $D_2$ as $\{a_1, a_2\},\{b_1, b_2\},\{c_1, c_2\}$. The red edges correspond to reversed edges, and blue edges follow the direction from $V_1$ to $V_2$, $V_2$ to $V_3$, and $V_3$ to $V_1$. We omit edges inside parts for a cleaner picture.}
    \label{fig:411pattern}
\end{figure}

We need to split the proof into two cases $s=2$ and $s\ge 3$ since for $s=2$ there are embeddings of $D_2$ into $T(2,k)$ which are unbalanced and not entirely contained in one part (see Claim~\ref{claim:int_pat} and Figure~\ref{fig:411pattern}), which does not occur for $s\ge 3$ (see Claim~\ref{claim:int_pat_dr}). 

\begin{claim} \label{claim:int_pat}
    Let $\varphi: V(D_2) \to V(G)$ be an embedding of $D_2$ into an oriented graph $G$ from the family $\G$ with partition $\CP = (V_1,V_2,V_3)$.
    Then the index vector $\mathbf{i}_\CP(\varphi(D_2))$ is one of the following: $(6,0,0)$, $(0,6,0)$, $(0,0,6)$, $(4,1,1)$, $(1,4,1)$, $(1,1,4)$, $(3,3,0)$, $(3,0,3)$, $(0,3,3)$, or $(2,2,2)$.
\end{claim}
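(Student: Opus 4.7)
The plan is to argue by contradiction. Assume $(n_1, n_2, n_3) := \mathbf{i}_\CP(\varphi(D_2))$ is not one of the ten listed vectors. A quick check shows these are exactly the triples summing to $6$ with $n_1 \equiv n_2 \equiv n_3 \pmod 3$, so the negation forces $(n_1, n_2, n_3)$ to be a permutation of $(5,1,0)$, $(4,2,0)$, or $(3,2,1)$. For each such case I will exhibit a forbidden configuration among the reverse edges of $G$ (either a $K_{1,2}$ between two parts of $\CP$ or a consistently oriented $K_3$ with one vertex in each part), contradicting $G \in \G(2)$. The cyclic symmetries of $(V_1, V_2, V_3)$ and of the $D_2$-parts $A = \{a_1, a_2\}$, $B = \{b_1, b_2\}$, $C = \{c_1, c_2\}$ will cut the casework to a handful of subcases.

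For a permutation of $(5,1,0)$, I focus on the unique vertex $v$ of $\varphi(D_2)$ in the singleton part $V_i$. Its two out-neighbors in $D_2$ (which form a full $D_2$-part) and its two in-neighbors in $D_2$ (another full $D_2$-part) all lie in the size-$5$ part $V_j$. Exactly one of the directions $V_i \to V_j$, $V_j \to V_i$ is reverse, so one of these two pairs of edges at $v$ forms a $K_{1,2}$ of reverse edges. For a permutation of $(4,2,0)$, let $V_j$ have size $2$ and $V_k$ size $4$. I split on whether the two vertices of $\varphi(D_2) \cap V_j$ lie in the same $D_2$-part (so a full $D_2$-part sits in $V_j$ and another in $V_k$, giving four same-direction reverse edges, hence a $K_{1,2}$) or in two distinct $D_2$-parts; in the latter a brief check over which pair of $D_2$-parts is involved again produces a vertex of $V_j$ with two reverse edges to $V_k$, hence a $K_{1,2}$.

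The main work, and the only place where the $K_3$ restriction is needed, is a permutation of $(3,2,1)$. By cyclic symmetry I take $V_3$ to be the singleton part; let $w \in V(D_2)$ be the unique vertex with $\varphi(w) \in V_3$, and by cyclic symmetry of $D_2$ I may assume $w \in A$. I then split on the position of $w$'s sibling $s \in A$, which must lie in $V_1$ or $V_2$. In the branch where the internal edge between $w$ and $s$ becomes reverse, the $K_{1,2}$-freeness at $w$ quickly forces both $C$-vertices into one common part, producing a $K_{1,2}$ centered at $s$. In the remaining branch, the $K_{1,2}$-freeness at $w$ forces one of each of $B, C$ into each of $V_1, V_2$; enumerating the four resulting configurations, in each one of the eight consistently oriented $D_2$-triangles maps to a triangle in $G$ with one vertex in each of $V_1, V_2, V_3$ and all three edges reverse---the forbidden $K_3$.

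The main obstacle is this last subcase of $(3,2,1)$: the forbidden structure switches from $K_{1,2}$ to $K_3$, and identifying the offending triangle requires careful tracking of how both the between-part $D_2$-edges and the three internal $D_2$-edges (which can swing between reverse and forward depending on the specific split of each $D_2$-part across $\CP$) are oriented under $\varphi$.
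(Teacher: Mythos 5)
Your overall approach matches the paper's: reduce by the $S_3$ symmetry of $(V_1,V_2,V_3)$ to ruling out $(5,1,0)$, $(4,2,0)$, $(3,2,1)$; use $K_{1,2}$-freeness for the first two; and use $K_{1,2}$- and $K_3$-freeness for the third. Your mod-$3$ observation is a nice compact description of the allowed vectors, and your arguments for $(5,1,0)$ and $(4,2,0)$ are correct and essentially the paper's.

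There is, however, an imprecision in your $(3,2,1)$ case split that you should be aware of, because the claimed ``forced'' conclusion in the forward branch is not actually forced. Take $w\in A\cap V_3$, $s\in A$ the sibling, and suppose $s\in V_1$ with $w\to s$ (the internal edge is forward, $V_3\to V_1$). Then $K_{1,2}$-freeness at $w$ gives only that at most one $B$-vertex lies in $V_2$ and at most one $C$-vertex lies in $V_1$. This permits \emph{two} kinds of configurations: (a) $B\subseteq V_1$ and $C\subseteq V_2$, or (b) one of each of $B,C$ in each of $V_1,V_2$. Your plan only accounts for (b), which indeed produces the reverse $K_3$ (e.g.\ $c^{(1)}\to w\to b^{(2)}\to c^{(1)}$ with $c^{(1)}\in V_1$, $b^{(2)}\in V_2$). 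But (a) must be ruled out separately; there the two edges $c_1\to s$, $c_2\to s$ are reverse $V_2\to V_1$, giving a $K_{1,2}$ at $s$, not a $K_3$. Similarly, your ``reverse internal edge'' branch mixes the case $s\in V_1,\,s\to w$ (where $C\subseteq V_2$ is forced and $K_{1,2}$ at $s$ via $C$) with $s\in V_2,\,w\to s$ (where $B\subseteq V_1$ is forced and $K_{1,2}$ at $s$ via $B$), so ``both $C$-vertices into one common part'' is not the right description of what happens in the latter. The paper sidesteps these wrinkles by splitting on whether some $D_2$-part lies entirely in $V_1$: if yes, the $(4,2,0)$ argument transfers verbatim; if no, the three $V_1$-vertices come one from each $D_2$-part and the reverse $K_3$ appears immediately. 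This split aligns exactly with the $K_{1,2}$ vs.\ $K_3$ dichotomy, which your internal-edge-orientation split does not. Your argument can certainly be repaired by adding the missing configuration, but the paper's organization is cleaner and you may wish to adopt it.
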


\begin{claimproof} 
    Let $\varphi: V(D_2) \to V(G)$ be an embedding of $D_2$ into an oriented graph $G$ from the family $\G$ with partition $\CP = (V_1,V_2,V_3)$. Since all triples $(u,v,w)$ where $u \ge v \ge w$ and $u+v+w=6$ are 
    $$\{ (6,0,0), (5,1,0), (4,2,0), (4,1,1), (3,3,0), (3,2,1), (2,2,2) \}, $$ 
    by Lemma~\ref{lem:sym}, it is enough to check that the index vector $\mathbf{i}_\CP(\varphi(D_2)) = (|V_1\cap \varphi(D_2)|, |V_2 \cap \varphi(D_2)|, |V_3 \cap \varphi(D_2)|)$ cannot be $(5,1,0)$, $(4,2,0)$, or $(3,2,1)$.

    First, assume $\mathbf{i}_\CP(\varphi(D_2)) = (5,1,0)$.
    Note that two parts of $D_2$ must be embedded into $V_1$. Therefore, by the $K_{1,2}$-freeness of $G_{\CP}$, the vertices from the remaining part of $D_2$ cannot be embedded into $V_2$, contradicting the fact $|V_2 \cap \varphi(D_2)| = 1$.

    Now, assume $\mathbf{i}_\CP(\varphi(D_2)) = (4,2,0)$. Note that at least one part of $D_2$ must be embedded into $V_1$. Then, the two vertices embedded into $V_2$ must come from the following part, but then no vertex from the third part can be embedded into $V_1$, contradicting the fact $|V_1 \cap \varphi(D_2)| > 2$.

    Finally, assume $\mathbf{i}_\CP(\varphi(D_2)) = (3,2,1)$. We argue in a similar way to the last case if there is a part from $D_2$ completely embedded into $V_1$. Otherwise, the three vertices embedded into $V_1$ come from the three different parts of $D_2$. In any such embedding, there are vertices $v_1, v_2, v_3 \in V(D_2)$ from different parts so that $v_1v_2v_3$ is a consistently oriented triangle in $D_2$, $v_3$ is embedded into $V_3$, $v_2$ is embedded into $V_1$, and $v_1$ is embedded into $V_2$. Thus, there is a triangle in $G_{\CP}$, a contradiction. 
\end{claimproof}

\begin{claim} \label{claim:int_pat_dr}
    Let $s\ge 3$ and $\varphi: V(D_s) \to V(G)$ be an embedding of $D_s$ into an oriented graph $G$ from the family $\G$ with partition $\CP = (V_1,V_2,V_3)$.
    Then the index vector $\mathbf{i}_\CP(\varphi(D_s))$ is $(3s,0,0)$, $(0,3s,0)$, $(0,0,3s)$, or $(s,s,s)$.
\end{claim}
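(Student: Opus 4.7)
The plan is to encode the embedding $\varphi$ by the $3 \times 3$ matrix of nonnegative integers $a_i \coloneqq |A \cap \varphi^{-1}(V_i)|$, $b_i \coloneqq |B \cap \varphi^{-1}(V_i)|$, $c_i \coloneqq |C \cap \varphi^{-1}(V_i)|$ for $i \in [3]$, where $A, B, C$ are the three transitive parts of $D_s$ in cyclic order $A \to B \to C \to A$. Each row of this matrix sums to $s$, and $\mathbf{i}_\CP(\varphi(D_s))$ is precisely its column-sum vector, so the statement reduces to showing that the columns are either $(3s, 0, 0)$ (up to cyclic shift) or $(s, s, s)$.

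The workhorse observation I would use is that for each cyclically consecutive pair $(X, Y) \in \{(A, B), (B, C), (C, A)\}$ and each $i \in [3]$, the edges in $D_s$ from $X \cap \varphi^{-1}(V_i)$ to $Y \cap \varphi^{-1}(V_{i-1})$ (indices mod $3$) form a complete bipartite graph sitting entirely inside the reverse edges between $V_i$ and $V_{i-1}$ in $G$. Since $\G(s)$ forbids $K_{1,2}$ among reverse edges between two parts of $\CP$, each such piece is either empty or a single edge. This yields the nine implications
\begin{align*}
a_i \ge 2 &\Longrightarrow b_{i-1} = 0 \text{ and } c_{i+1} = 0, \\
b_i \ge 2 &\Longrightarrow c_{i-1} = 0 \text{ and } a_{i+1} = 0, \\
c_i \ge 2 &\Longrightarrow a_{i-1} = 0 \text{ and } b_{i+1} = 0,
\end{align*}
which are the main combinatorial tool.

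First I would rule out the case where every entry is at most $1$: then each row-sum $s$ is at most $3$, so $s = 3$ and every entry equals $1$, and choosing $a \in A \cap \varphi^{-1}(V_1)$, $b \in B \cap \varphi^{-1}(V_3)$, $c \in C \cap \varphi^{-1}(V_2)$ produces a consistently oriented triangle $abc$ in $D_s$ whose image in $G$ has all three edges reverse (namely $V_1 \to V_3$, $V_3 \to V_2$, $V_2 \to V_1$) with one vertex in each part of $\CP$ --- exactly the $K_3$ configuration forbidden in $\G(s)$. Otherwise some entry is at least $2$, and by the cyclic symmetry of $\CP$ combined with the rotational automorphism of $D_s$ permuting $A, B, C$, I may assume $a_1 \ge 2$. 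Then $b_3 = c_2 = 0$, so $b_1 + b_2 = s \ge 3$ forces $\max(b_1, b_2) \ge 2$. If $b_2 \ge 2$, the cascade $c_1 = a_3 = 0 \Longrightarrow c_3 = s \ge 2 \Longrightarrow a_2 = b_1 = 0$ closes with $a_1 = b_2 = c_3 = s$, giving index vector $(s, s, s)$; if instead $b_1 \ge 2$, the parallel cascade $c_3 = a_2 = 0 \Longrightarrow c_1 = s \ge 2 \Longrightarrow a_3 = b_2 = 0$ gives $a_1 = b_1 = c_1 = s$ and index vector $(3s, 0, 0)$.

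The main obstacle is keeping the cyclic bookkeeping straight across the two symmetric cascades. The hypothesis $s \ge 3$ enters in exactly two spots: once to deduce that the all-ones matrix is possible only at $s = 3$ (then ruled out by $K_3$-freeness), and once to guarantee that $b_1 + b_2 = s$ forces some $b_j \ge 2$ to propagate the cascade. It is precisely the failure of this mechanism at $s = 2$ (where $b_1 = b_2 = 1$ is now feasible) that enables the richer inventory of index vectors listed in Claim~\ref{claim:int_pat}.
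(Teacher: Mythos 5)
Your proposal is correct and proves the same statement, but via a route that differs from the paper's. You encode the embedding as a $3\times 3$ matrix of counts and derive nine clean ``cascade'' implications ($x_i \ge 2 \Rightarrow $ two other entries vanish) from $K_{1,2}$-freeness of reverse edges; combined with one separate argument ruling out the all-ones matrix via the forbidden reverse $K_3$, the cascade then forces the column sums to be $(s,s,s)$ or a cyclic shift of $(3s,0,0)$. The paper instead first normalizes the index vector so that $u\ge v\ge w$ (using reversal-and-swap plus cyclic symmetries), then branches on whether two vertices from a common part of $D_s$ land in $V_2$, leading to subcases that are checked more directly without an explicit matrix. The two proofs invoke the same structural facts about $\G(s)$, but your version makes the mechanism more systematic and transparent: the implications are written down once and for all, the pigeonhole step $b_1+b_2=s\ge 3 \Rightarrow \max(b_1,b_2)\ge 2$ isolates exactly where $s\ge 3$ is used, and the contrast with the $s=2$ situation (where $b_1=b_2=1$ stalls the cascade) falls out for free, which the paper handles separately in Claim~\ref{claim:int_pat}. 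The paper's approach is somewhat shorter since the $u\ge v\ge w$ normalization prunes cases early, at the cost of a less uniform case split. Both are valid; your version is arguably easier to verify and generalizes more readily.
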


\begin{claimproof}
    Let $\varphi: V(D_s) \to V(G)$ be an embedding of $D_s$ into an oriented graph $G$ from the family $\G$ with partition $\CP = (V_1,V_2,V_3)$. By Lemma~\ref{lem:sym}, we can assume the index vector $\mathbf{i}_\CP(\varphi(D_s)) = (u,v,w)$ satisfies $u \ge v \ge w$. 

    We let $A$, $B$, and $C$ denote the three parts of $D_s$, where all edges between parts go from $A$ to $B$, from $B$ to $C$, and from $C$ to $A$. We will split the proof into the following three cases. 

    \medskip \noindent \textbf{Case (1)} There are at least two vertices from the same part of $D_s$ embedded into $V_2$. \medskip
    
    Assume two vertices from $A$ are embedded into $V_2$. Note that no vertex from $B$ can be embedded into $V_1$. As $u \ge s \ge 3$, there are at least two vertices from the same part embedded into $V_1$.

    \medskip \textbf{Subcase (1.1) } Two vertices from $A$ are embedded into $V_1$. \medskip

    Then $B$ has to be completely embedded into $V_2$, and vertices from $C$ cannot be embedded into any part, a contradiction.

    \medskip \textbf{Subcase (1.2) } Two vertices from $C$ are embedded into $V_1$. \medskip

    Then $B$ has to be completely embedded into $V_3$, which implies $w \ge s$, which can only happen when $\mathbf{i}_\CP(\varphi(D_s))=(s,s,s)$.

    \medskip \noindent  \textbf{Case (2) } $w \ge 1$ and $V_2$ contains at most one vertex from each part of $D_s$. \medskip
    
    Since $V_2$ contains at most one vertex from each part of $D_s$, then $v \le 3$. Let us assume, without loss of generality, that there is a vertex from $A$ embedded into $V_3$.

    \medskip \textbf{Subcase (2.1) } There is a vertex from $B$ embedded into $V_2$ and a vertex from $C$ into $V_1$. \medskip
    
    In this case, there is a triangle of reversed orientation, a contradiction.

    \medskip \textbf{Subcase (2.2) } There is a vertex from $B$ embedded into $V_2$ and $C$ is embedded into $V_2 \cup V_3$. \medskip
    
    Then, as there is at most one vertex from $C$ embedded into $V_2$, there is one vertex from $A$ and at least $s-1$ vertices from $C$ embedded into $V_3$.
    We conclude $w \ge s$, which implies $\mathbf{i}_\CP(\varphi(D_s))=(s,s,s)$.
    
    \medskip \textbf{Subcase (2.3) } There is no vertex from $B$ embedded into $V_2$ , but there is a vertex from $A$ embedded into $V_2$. \medskip
 
    Since there is a vertex from $A$ embedded into $V_2$, there could not be two vertices from $B$ embedded into $V_1$. Therefore, there is no vertex from $B$ embedded into $V_2$ and at most one into $V_1$, which implies there are at least $s-1\ge 2$ vertices from $B$ embedded into $V_3$ which implies $2 \ge v \ge w \ge 3$, a contradiction.

    \medskip \textbf{Subcase (2.4) } There is no vertex from $A$ or $B$ embedded into $V_2$. \medskip
    
    As $v\ge w\ge 1$, there is a vertex from $C$ embedded into $V_2$. Then, there could not be two vertices from $A$ embedded into $V_1$. Therefore, there is no vertex from $A$ embedded into $V_2$ and at most one embedded into $V_1$, which implies there are at least $s-1\ge 2$ vertices from $A$ embedded into $V_3$ which implies $1 \ge v \ge w \ge 2$, a contradiction.

    \medskip \noindent  \textbf{Case (3) } $w=0$ and $V_2$ contains at most one vertex from each part of $D_s$. \medskip
    
    Since $s\ge 3$, there are at least two vertices from each part of $D_s$ embedded into $V_1$. Therefore, there could be no vertex embedded into $V_2$, which implies $\mathbf{i}_\CP(\varphi(D_s))=(3s,0,0)$.
\end{claimproof}

\begin{claim} \label{claim:lin_comb}
    The vector $(1,2,3)$ does not belong to the lattice generated by integer linear combinations modulo 6 of the vectors $(2,2,2)$, $(4,1,1)$, $(1,4,1)$, $(1,1,4)$, $(3,3,0)$, $(3,0,3)$, and $(0,3,3)$.
\end{claim}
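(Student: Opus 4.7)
The plan is to exhibit a simple congruence invariant satisfied by all seven generators but not by $(1,2,3)$. The natural choice is reduction modulo $3$. The first step is to check that each generator $v$ has the property that all three coordinates of $v$ are congruent to one another modulo $3$: indeed $(4,1,1) \equiv (1,4,1) \equiv (1,1,4) \equiv (1,1,1) \pmod{3}$, $(2,2,2) \equiv (2,2,2) \pmod{3}$, and $(3,3,0) \equiv (3,0,3) \equiv (0,3,3) \equiv (0,0,0) \pmod{3}$. Consequently, every integer linear combination of the seven generators lies in the subgroup
\[
    L \coloneqq \{(c,c,c) : c \in \mathbb{Z}/3\mathbb{Z}\} \le (\mathbb{Z}/3\mathbb{Z})^3.
\]

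Next, I would use the fact that reduction modulo $3$ factors through reduction modulo $6$, since $6\mathbb{Z}^3 \subseteq 3\mathbb{Z}^3$. So if $(1,2,3)$ were representable as an integer linear combination of the seven generators modulo $6$, then the same equality would hold modulo $3$, and hence $(1,2,3) \equiv (1,2,0) \pmod{3}$ would belong to $L$. But the coordinates of $(1,2,0)$ are not all congruent modulo $3$, so $(1,2,0) \notin L$, a contradiction.

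There is no serious obstacle in this argument: the whole claim reduces to the observation that every generator has coordinates with equal residue modulo $3$, while $(1,2,3)$ does not. The only mild subtlety worth noting in the write-up is that working modulo $6$ is no stronger an assumption than working modulo $3$ for the purpose of ruling out membership, which is why the mod-$3$ invariant suffices.
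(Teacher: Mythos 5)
Your argument is correct and in fact cleaner than the paper's. The paper first eliminates $(3,3,0)$, $(3,0,3)$, $(0,3,3)$ by writing each as the sum mod $6$ of $(2,2,2)$ with one of the $(4,1,1)$-type vectors, reducing to four generators; it then supposes $\alpha(2,2,2)+\beta(4,1,1)+\gamma(1,4,1)+\sigma(1,1,4)\equiv(1,2,3)\pmod 6$ and subtracts the first two coordinate equations to get $3(\beta-\gamma)\equiv -1\pmod 6$, impossible since $3(\beta-\gamma)$ is a multiple of $3$. Your proof isolates the invariant that makes this calculation work: every generator has all three coordinates congruent modulo $3$, so every integer combination does as well, and reduction mod $6$ factors through reduction mod $3$; but $(1,2,3)\equiv(1,2,0)\pmod 3$ has distinct residues. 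A single observation replaces both the elimination step and the subtraction, and the paper's $3(\beta-\gamma)\equiv -1$ is just your mod-$3$ obstruction applied to the difference of the first two coordinates. Your version also scales better if one wanted to decide membership for other target vectors.
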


\begin{claimproof}
    In what follows, all equations are modulo 6. First, note that 
    $(3,3,0) = (2,2,2)+(1,1,4).$ 
    Similarly, $(3,0,3)$, and $(0,3,3)$ can be written as linear integer combinations of $(2,2,2)$, $(4,1,1)$, $(1,4,1)$, and $(1,1,4)$.
    Now, assume that, for some integers $\alpha$, $\beta$, $\gamma$, and $\sigma$, we have that 
    $\alpha (2,2,2)+ \beta (4,1,1) + \gamma (1,4,1) + \sigma (1,1,4) = (1,2,3).$
    Thus, from
    $ 2 \alpha + 4 \beta + \gamma + \sigma = 1$ and
    $ 2 \alpha + \beta + 4 \gamma + \sigma = 2$
    we conclude
    $ 3 (\beta - \gamma) = -1, $
    a contradiction, since $3 (\beta - \gamma) \in \{0,3\}$ mod 6.
\end{claimproof}

\begin{proof}[Proof of Proposition~\ref{prop:dr}]
    Note that $T(s,k)$ is an oriented graph belonging to the family $\G$ with parts $V_1, V_2, V_3$ of size $(|V_1|, |V_2|, |V_3|) = (s-1,s,s+1) + k (s,s,s)$.
    For $s=2$, by Claim~\ref{claim:int_pat}, the index vector modulo 6 of every copy of $D_2$ is $(4,1,1)$, $(1,4,1)$, $(1,1,4)$, $(3,3,0)$, $(3,0,3)$, $(0,3,3)$, $(2,2,2)$, or $(0,0,0)$. Hence, if $T(2,k)$ contains a perfect $D_2$-tiling, there is a linear integer combination of these vectors equal to $(|V_1|, |V_2|, |V_3|)$, contradicting Claim~\ref{claim:lin_comb}.    
    For $s\ge 3$, by Claim~\ref{claim:int_pat_dr}, the index vector modulo $3s$ of every copy of $D_s$ is $(s,s,s)$ or $(0,0,0)$. Therefore, if $T(s,k)$ contains a perfect $D_s$-tiling, then $|V_1|\equiv |V_2| \equiv |V_3|$ modulo $3s$, a contradiction.  
\end{proof}

\section{Minimum semi-degree threshold for $C_6^2$} \label{sec:kappa}

In this section, we will show that $\kappa^0(C_6^2) \ge 3/7$. We will make use of the following claim.

\begin{claim} \label{claim:blowup}
    Let $2\ell+1 \le k \le 4 \ell+1$. If $G$ is an oriented graph with no copy of $C_{k}^\ell$ then a blow-up of $G$ also contains no copy of $C_{k}^\ell$.
\end{claim}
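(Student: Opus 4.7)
The plan is to argue by contrapositive: if some blow-up $G'$ of $G$ contains a copy of $C_k^\ell$, then so does $G$. Write the supposed copy in $G'$ as $w_0, w_1, \ldots, w_{k-1}$, where $w_i \to w_j$ exactly when $j - i \pmod k \in \{1, \ldots, \ell\}$, and for each $i$ let $v_i \in V(G)$ be the vertex whose blob contains $w_i$. Since any two distinct blobs of $G'$ are joined in exactly the same way as the corresponding vertices of $G$, the whole proof reduces to showing that the $v_i$ are pairwise distinct; granted that, $v_0, \ldots, v_{k-1}$ span a copy of $C_k^\ell$ in $G$, contradicting the hypothesis.

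To establish distinctness, suppose $v_0 = v_d$ for some $0 < d < k$ (by cyclic symmetry we may assume one of the repeated indices is $0$). Then $w_0$ and $w_d$ share a blob and are therefore non-adjacent in $C_k^\ell$, which forces $\ell+1 \le d \le k-\ell-1$. The key observation is that the out-neighborhood of $w_0$, namely $\{w_1,\dots,w_\ell\}$, meets the in-neighborhood of $w_d$, namely $\{w_{d-\ell},\dots,w_{d-1}\}$, whenever $d \le 2\ell$. Picking any $w_m$ in that intersection, we have $w_0 \to w_m$ and $w_m \to w_d$ in $G'$, which translate to $v_0 \to v_m$ and $v_m \to v_0$ in $G$ (noting $v_m \ne v_0$: since $w_0 w_m$ is an edge of $G'$, the vertices $w_0$ and $w_m$ lie in different blobs). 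This double edge contradicts $G$ being oriented. A symmetric argument using the in-neighborhood of $w_0$ and the out-neighborhood of $w_d$ handles the remaining range $d \ge k - 2\ell$.

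The hypothesis $k \le 4\ell + 1$ is used precisely here: it guarantees
\[
\{\ell+1,\dots,2\ell\} \cup \{k-2\ell,\dots,k-\ell-1\} = \{\ell+1,\dots,k-\ell-1\},
\]
so every forbidden value of $d$ falls into at least one of the two cases above. The only real obstacle is the elementary indexing arithmetic to verify these two neighborhood overlaps and to check that their union covers the entire non-adjacent range, but once this bookkeeping is done, the contradiction is immediate and distinctness of the $v_i$ follows.
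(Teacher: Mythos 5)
Your proof is correct and follows essentially the same line of reasoning as the paper's: the crux in both is that, under $k \le 4\ell+1$, any two distinct vertices of $C_k^\ell$ are joined by an edge or a directed $2$-path, so no two of them can land in the same blob (since a blob is independent and, with $G$ oriented, cannot be simultaneously an in-neighbor and an out-neighbor of another blob). You simply carry out the index arithmetic explicitly via the two cases $d \le 2\ell$ and $d \ge k-2\ell$, whereas the paper states the edge-or-$2$-path observation without spelling out the bookkeeping.
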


\begin{claimproof}
    Let $V(G)=[n]$.
    Let $G_t$ denote the $t$-blow-up of $G$, that is, there is a partition $( V_1, \dots, V_n )$ of the vertex set $V(G_t)$ so that $|V_i|=t$ for every $i \in [n]$ and we have $ab \in E(G_t)$ if and only if $a \in V_i$ and $b \in V_j$ for some $i,j \in [n]$ for which $ij \in E(G)$.

    The condition $k \le 4 \ell+1$ implies that for every two vertices $u,v \in V(C_k^\ell)$ there is an edge or a 2-path connecting $u$ and $v$, that is, either $uv \in E(C_k^\ell)$ or $vu \in E(C_k^\ell)$, or there is $w \in V(C_k^\ell)$ such that $uw, wv \in E(C_k^\ell)$ or $vw, wu \in E(C_k^\ell)$.

    If $u, v \in V_i$ for some $i \in [n]$, then there is no edge or 2-path connecting $u$ and $v$ in $G_t$. Therefore, for any embedding $\varphi: V(C_k^\ell) \to V(G_t)$ of $C_k^\ell$ into $G_t$, the vertices of $C_k^\ell$ are embedded into different parts, which implies one can obtain an embedding of $C_k^\ell$ into $G$ from $\varphi$.  
\end{claimproof}

The authors of~\cite{DHLMPT} noted that $\kappa^0(C_6^2) \ge 2/5$, since a regular tournament on five vertices does not contain a copy of $C_{6}^2$. However, there exists a regular tournament on seven vertices that does not contain a copy of $C_6^2$.
Let $T$ be the tournament on the vertex set $V(T) \coloneqq \{ v_1, \dots, v_7\}$ with edge set $E(T) \coloneqq \{ v_i v_j : j-i \in \{1,2,4\} \} $, where the indices are taken modulo 7 (see Figure~\ref{fig:tourn_T}). 

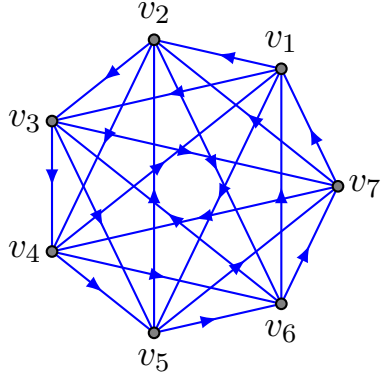
\begin{figure}
    \centering
    \begin{tikzpicture}[thick,scale=1] 
        \foreach \x in {1,...,7} {
        \foreach \y in {1,2,4} {
            \draw[middlearrow={latex}, blue] (360*\x/7 : 2) -- (360*\x/7 + 360*\y/7 : 2);
            }}
    
        \draw (360*1/7 : 2) node [label=above:{$\scaleto{v_{1}}{8pt}$}] {};
        \draw (360*2/7 : 2) node [label=above:{$\scaleto{v_{2}}{8pt}$}] {};
        \draw (360*3/7 : 2) node [label=left:{$\scaleto{v_{3}}{8pt}$}] {};
        \draw (360*4/7 : 2) node [label=left:{$\scaleto{v_{4}}{8pt}$}] {};
        \draw (360*5/7 : 2) node [label=below:{$\scaleto{v_{5}}{8pt}$}] {};
        \draw (360*6/7 : 2) node [label=below:{$\scaleto{v_{6}}{8pt}$}] {};
        \draw (360*7/7 : 2) node [label=right:{$\scaleto{v_{7}}{8pt}$}] {};
    \end{tikzpicture}
    \caption{A regular tournament on 7 vertices with no copy of $C_6^2$.}
    \label{fig:tourn_T}
\end{figure}

\begin{claim} \label{claim:c62free}
    The tournament $T$ defined above does not contain a copy of $C_6^2$.
\end{claim}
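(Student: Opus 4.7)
The plan is to exploit the vertex-transitivity of the Paley tournament $T$ under the rotation $v_i \mapsto v_{i+1}$: since $C_6^2$ has six vertices, any embedding into $T$ misses exactly one vertex, and by rotational symmetry I may assume the missing vertex is $v_7$. It then suffices to show that $T' \coloneqq T - v_7$ contains no copy of $C_6^2$.

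The key observation is that deleting $v_7$ breaks the regularity of $T$: the three in-neighbors of $v_7$, namely $B \coloneqq \{v_3, v_5, v_6\}$, each lose an out-edge and drop to out-degree $2$ in $T'$, while the other three vertices $A \coloneqq \{v_1, v_2, v_4\}$ retain out-degree $3$. I would then compute the out-neighborhoods $N^+_{T'}(v_3) = \{v_4, v_5\}$, $N^+_{T'}(v_5) = \{v_2, v_6\}$, and $N^+_{T'}(v_6) = \{v_1, v_3\}$, and note that since every vertex of $C_6^2$ has out-degree exactly $2$, if $w_i \coloneqq \varphi(u_i) \in B$ for an embedding $\varphi : V(C_6^2) \to V(T')$ with $u_1, \dots, u_6$ in cyclic order, then the ordered pair $(w_{i+1}, w_{i+2})$ is uniquely determined by $w_i$ (using the direction of the edge between the two out-neighbors to break the ambiguity). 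Inspecting the three cases reveals a clean propagation rule: $w_i \in B$ implies $w_{i+1} \in A$ and $w_{i+2} \in B$.

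The rule $w_i \in B \Rightarrow w_{i+2} \in B$ forces the set $S \coloneqq \{i \in \mathbb{Z}/6\mathbb{Z} : w_i \in B\}$ to be a union of orbits of $+2 \pmod 6$, which are $\{1, 3, 5\}$ and $\{2, 4, 6\}$. Since $\varphi$ is injective and $\{w_1, \dots, w_6\} = V(T')$, exactly $|B| = 3$ of the $w_i$ lie in $B$, so $S$ is a single orbit. By cyclically relabeling the vertices of $C_6^2$ I may assume $S = \{1, 3, 5\}$, and then the choice of $w_1 \in B$ determines the entire sequence $(w_1, \dots, w_6)$ via the forcing rule.

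This leaves three candidate embeddings, one for each $w_1 \in \{v_3, v_5, v_6\}$. To finish, for each candidate I would check whether all twelve edges of $C_6^2$ are present in $T$; in each case, the edge $w_2 \to w_4$ turns out to be reversed in $T$, giving a contradiction. The main obstacle is discovering the clean forcing structure in the second paragraph; once that is in place, the remainder reduces to a small, mechanical finite check.
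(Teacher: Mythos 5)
Your argument is correct and follows the same essential strategy as the paper's: use the vertex-transitivity of $T$ to assume $v_7$ is not in the image, then exploit the fact that the three in-neighbors of $v_7$ have out-degree exactly $2$ in $T - v_7$, so that once an embedded vertex lands on one of them its next two images are forced (with the order determined by the edge between the two out-neighbors). The paper's version is slightly more economical --- it starts directly from $\varphi(u_1)=v_6$, forces $\varphi(u_2)=v_1$, $\varphi(u_3)=v_3$, pins down $\varphi(u_4)$ via $N^+(v_1)\cap N^+(v_3)=\{v_5\}$, and reaches a contradiction at $\varphi(u_5)\in N^+(v_3)\cap N^+(v_5)=\{v_7\}$ without needing the $A$/$B$ dichotomy or the orbit argument, and your three candidate sequences are in fact cyclic rotations of one another, so a single check would have sufficed.
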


\begin{claimproof}
    Assume for the sake of contradiction that $\varphi: V(C_6^2) = \{u_1, \dots, u_6\} \to V(T)$ is an embedding of $C_6^2$ into $T$. Assume without loss of generality that $v_7 \notin \varphi(C_6^2)$ and $\varphi(u_1) = v_6$. Therefore, since $N^+(v_6) \cap \varphi(C_6^2) = \{ v_1, v_3 \}$ and the edge between $v_1$ and $v_3$ goes from $v_1$ to $v_3$, we must have $\varphi(u_2) = v_1$ and $\varphi(u_3) = v_3$.

    As $N^+(v_1) \cap N^+(v_3) = \{v_5\}$, then $\varphi(u_4) = v_5$. Finally, as $N^+(v_3) \cap N^+(v_5) = \{ v_7 \}$, we arrive at a contradiction, since no vertex can be $\varphi(u_5)$.
\end{claimproof}

\begin{proof}[Proof of Proposition~\ref{prop:kappa}]
    By Claims~\ref{claim:blowup} and~\ref{claim:c62free}, the $t$-blow-up $T_t$ of the tournament $T$ has $7t$ vertices, minimum semi-degree $\delta^0(T_t)= 3t$ and does not contain a copy of $C_6^2$. We conclude 
    $$\kappa^0(C_6^2) \ge \lim_{t \to \infty} \frac{3t+1}{7t} = \frac{3}{7}. \eqno\qedhere$$
\end{proof}

\section{The tileability of $D_{1,1,2}$} \label{sec:d112}

In this section, we will use some of the theory and notation developed in~\cite{han,lo-markstrom} to show that the oriented graph $D_{1,1,2}$ is tileable. The proof is based on the stability method, similar to the proof that the consistently oriented triangle is tileable in~\cite{li-molla}. For $\gamma>0$, we say an oriented graph $G$ on $n$ vertices is \emph{$\gamma$-extremal} if there exists a partition $\CP=(V_1, V_2, V_3)$ of $V(G)$ such that, for every $i \in [3]$,
$$(1/3 - \gamma ) n \le |V_i| \le (1/3 + \gamma ) n, $$
and the number of edges oriented from $V_3$ to  $V_2$, from $V_2$ to  $V_1$, and from $V_1$ to $V_3$ are each at most $\gamma n^2$. We call such partition $\CP$, a \emph{$\gamma$-extremal partition} of $V(G)$.

Before we present the proof, we will introduce more notation and definitions to facilitate the argument. The rest of the section is organized as follows. In Section~\ref{subsec:notation}, we introduce the notation used in the proof. In Section~\ref{subsec:prelim}, we prove an almost perfect tiling result. In Section~\ref{subsec:overview}, we introduce auxiliary lemmas that imply Theorem~\ref{thm:d112}. We defer the proof of the auxiliary lemmas to Sections~\ref{subsec:non_ext} and~\ref{subsec:ext}.

\subsection{Notation}\label{subsec:notation}

Recall that $C_3$ is the consistently oriented triangle, and for simplicity, we denote $D=D_{1,1,2}$.
Given oriented graphs $F,G$, let $H = H_{F}$ be the $|F|$-uniform hypergraph with $V(H)=V(G)$, and $e \in E(H)$ if and only if $G[e]$ induces a copy of $F$.
We denote $H_3 \coloneqq H_{C_3}$ and $H_4 \coloneqq H_{D}$.
For a hypergraph $H$, let $\delta_1(H)$ denote the minimum degree of $H$.

For sets $A,B\subseteq V(G)$, define 
$$E^\sigma(A,B)=\bigcup_{v\in A}N^\sigma(v,B)\qquad \text{and} \qquad e^\sigma(A,B)=|E^\sigma(A,B)|$$ where $\sigma\in\{+,-\}$.
Subsequently, define $E(A,B)=E^+(A,B)\cup E^-(A,B)$ and $e(A,B)=|E(A,B)|$.
Given some $\veps>0$, we say that the ordered pair $(A,B)$ is \textit{$\veps$-regular} if for every $X\subseteq A$ and $Y\subseteq B$ with $|X|\ge \veps|A|$ and $|Y|\ge \veps |B|$, we have that 
$$\left|\frac{e^+(X,Y)}{|X||Y|}-\frac{e^+(A,B)}{|A||B|}\right|\le \veps.$$ 

For an edge $uv\in E(G)$ and a subset $A\subseteq V(G)$, define 
$$d^{\sigma,\tau}(uv,A)=|N^\sigma(u,A)\cap N^\tau(v,A)|,$$ where $\sigma,\tau\in\{+,-\}$.
For a set $A\subseteq V(G)$ and some positive constant $\beta<1$, define $$N^\sigma_\beta(A)=\{v\in V(G) \setminus A :d^\tau(v,A)\ge |A|-\beta n\},$$ where $\{\sigma,\tau\}=\{+,-\}$.
For a set $U\subseteq V(G)$, let $d(U)$ be the number of copies of $D$ contained in $G[U]$.
For sets $W,X,Y,Z\subseteq V(G)$, let $d(W,X,Y,Z)$ be the number of copies of $D$ with vertex set $\{w,x,y,z\}$ such that $w\in W$, $x \in X$, $y \in Y$, and $z\in Z$.

Given a $k$-uniform hypergraph $H$, vertices $x,y\in V(H)$, and $\beta>0$, we say that a set $S\subseteq V$ is an $(H,x,y)$-linking $(k\ell-1)$-set if both $H[S\cup\{x\}]$ and $H[S\cup\{y\}]$ have perfect matchings and $|S|=k\ell-1$.
The vertices $x, y \in V(H)$ are $(H,\beta,\ell)$-reachable if there are at least $\beta n^{k\ell-1}$ $(H,x,y)$-linking $(k\ell-1)$-sets.
A set $U\subseteq V$ is $(H,\beta,\ell)$-closed if every pair of distinct vertices in $U$ is $(H,\beta,\ell)$-reachable.
A partition $\CP$ of $V(H)$ is $(H,\beta,\ell)$-closed if every set in $\CP$ is $(H,\beta,\ell)$-closed.
For every $x\in V(H)$, let $\wt N_H(\beta,\ell,x)$ be the set of vertices $y$ such that $x,y$ are $(H,\beta,\ell)$-reachable.

Recall that, given a partition $\CP=(V_1,\dots,V_d)$ of $V(H)$ and a subset $U\subseteq V(H)$, the \emph{index vector} of $U$ with respect to $\CP$, denoted by $\mathbf{i}_\CP(U)$, is defined as $\mathbf{i}_\CP(U)=(|U\cap V_1|,\dots,|U\cap V_d|)$.
Let $I_\CP(H)=\{\mathbf{i}_\CP(e):e\in E(H)\}$ be the set of \emph{edge-vectors} of $H$ with respect to $\CP$.
For $\mu>0$, let
$$I_\CP^\mu(H)=\{\mathbf{v}\in I_\CP(H):\text{there are at least $\mu n^k$ edges $e$ in $H$ with $\mathbf{i}_\CP(e)=\mathbf{v}$}  \}$$ be the set of \emph{$\mu$-robust edge-vectors}.

We may refer to an additive subgroup of $\mathbb{Z}^d$ as a \emph{lattice}.
Let $L_\CP(H)$, $L_\CP^\mu(H)$ be the lattices generated by $I_\CP(H)$, $I_\CP^\mu(H)$ respectively.
For $i\in [d]$, let $\mathbf{e}_i$ be the \emph{$i$-th unit vector} in $\mathbb{Z}^d$.
A \emph{transferral} is a vector $\mathbf{v}\in\mathbb{Z}^d$ such that there exist distinct $i,j\in[d]$ with $\mathbf{v}=\mathbf{e}_i-\mathbf{e}_j$.
A \emph{2-transferral} $\mathbf{v}\in L_\CP^\mu(H)$ is a transferral such that $\mathbf{v}=\mathbf{v}_1-\mathbf{v}_2$ for some $\mathbf{v}_1,\mathbf{v}_2\in I_\CP^\mu(H)$.
The lattice $L_\CP^\mu(H)$ is \emph{$2$-transferral-free} if it contains no $2$-transferral.

\subsection{Almost perfect tiling via regularity} \label{subsec:prelim}

In this section, we will obtain an almost perfect $D$-tiling result (namely, Theorem~\ref{thm:almost_perfect} below) from the celebrated regularity and blow-up lemmas. More specifically, we use the following degree form of the Regularity Lemma for digraphs. The digraph version of the regularity lemma can be proven in a similar way to the undirected version~\cite{alon}, and its degree form can be obtained exactly as in the undirected case. See~\cite{regularity} for a survey on the regularity lemma.

\begin{lemma}[Degree form of the Diregularity Lemma]\label{lem:di-regularity}
For every $\veps>0$ and integer $M$, there are some integers $M'$ and $n_0$ such that for every directed graph $G$ with $n\ge n_0$ vertices and $d \in [0,1]$, there is a partition of $V(G)$ into sets $V_0,V_1,\dots,V_\ell$ and a spanning subgraph $G'$ of $G$ such that the following holds:
\begin{itemize}
    \item $M\le \ell \le M'$,
    \item $|V_0|\le \veps n$,
    \item $|V_1|=|V_2|=\dots=|V_\ell|$,
    \item for $\sigma\in\{+,-\}$, $d^\sigma_{G'}(v)\ge d^\sigma_G(v)-(d+\veps)n$,
    \item for $1\le i\le \ell$, $G'[V_i]$ is empty,
    \item for every $1\le i,j\le \ell$ with $i \neq j$, the pair $(V_i,V_j)$ is $\veps$-regular in $G'$ and has density either $0$ or at least $d$.
\end{itemize}       
\end{lemma}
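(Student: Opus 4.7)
The plan is to deduce this degree form from the (non-degree) digraph Regularity Lemma by the same cleanup procedure used to pass from the classical Szemer\'edi Regularity Lemma to its degree form in the undirected setting. The undirected degree form is obtained by deleting edges of three types (intra-part edges, edges in irregular pairs, and edges in low-density pairs), and then checking that each vertex loses at most $(d+\veps)n$ of its degree; exactly this scheme will be mimicked here, separately for out- and in-degrees. The base digraph Regularity Lemma itself can be proved either by running Szemer\'edi's index-increment argument with both $e^+$ and $e^-$ contributing to the mean-square density, or by applying the undirected version to an auxiliary $2$-edge-colored graph that encodes orientations.

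First I would apply the digraph Regularity Lemma with auxiliary parameters $\veps' \ll \veps$ and $M' \ge M$ (chosen at the end) to obtain a partition $V_0^\ast, V_1, \dots, V_\ell$ with $|V_0^\ast| \le \veps' n$, equal part sizes, and all but at most $\veps'\ell^2$ ordered pairs $(V_i, V_j)$ being $\veps'$-regular. I then define $G'$ by deleting from $G$: every edge incident to $V_0^\ast$; every edge inside a part $V_i$; and every edge lying in an irregular pair or in a pair $(V_i, V_j)$ whose out-density is strictly less than $d$. The resulting spanning subgraph is empty on each $V_i$, and by construction every remaining pair is $\veps'$-regular (hence $\veps$-regular) with density either $0$ or at least $d$, which gives the last two bullets of the conclusion.

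The only non-automatic point is the per-vertex degree bound. A given vertex $v \notin V_0^\ast$ loses out-edges of three kinds: at most $\veps' n$ edges going into $V_0^\ast$; at most $d\,n$ edges crossing low-density pairs; and, in the worst case, up to $|V_1|$ edges for every irregular pair in which $v$'s own part participates. To control the last contribution I enlarge the exceptional set: let $V_0$ consist of $V_0^\ast$ together with every vertex whose part participates (through either in- or out-edges from $v$) in more than $\sqrt{\veps'}\,\ell$ irregular pairs. A standard double-counting against the bound of $\veps'\ell^2$ irregular pairs yields $|V_0 \setminus V_0^\ast| \le 2\sqrt{\veps'}\,n$, so choosing $\veps' \le \veps^2/16$ and $n_0$ large enough gives $|V_0| \le \veps n$, while the three contributions above sum to at most $(d + \veps)n$ in both orientations. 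The main obstacle, and really the only delicate point, is this joint calibration of $\veps'$ against $\veps$ and $d$ so that the two competing bounds on $|V_0|$ and on the per-vertex degree loss hold simultaneously; once the parameters are chosen compatibly, the rest is a verbatim adaptation of the undirected degree-form argument.
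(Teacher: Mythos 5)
The paper does not prove this lemma; it just cites~\cite{alon} and~\cite{regularity}, so I assess your sketch on its own. Your handling of irregular pairs is sound: moving every part that participates in more than $\sqrt{\veps'}\ell$ irregular pairs to the exceptional set costs $O(\sqrt{\veps'}n)$ vertices and caps the per-vertex loss to irregular pairs at $\sqrt{\veps'}n$ for the surviving parts. The genuine gap is the assertion that a vertex ``loses at most $dn$ edges crossing low-density pairs.'' That is an average statement over $V_i$, not a pointwise one. Regularity of a sparse ordered pair $(V_i,V_j)$ with density $p<d$ constrains only subsets of size $\ge\veps'|V_i|$, so up to $\veps'|V_i|$ ``heavy'' vertices of $V_i$ can have out-degree into $V_j$ close to $|V_j|$ rather than around $p|V_j|$, and a single vertex can be heavy for many sparse pairs at once, losing nearly its entire out-degree. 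Your cleanup addresses irregular pairs only and leaves such sparse-heavy vertices untouched, so the per-vertex degree bound does not follow from what you wrote.

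The repair is a parallel cleanup for sparse pairs: there are at most $\veps'\ell n$ heavy (vertex, sparse pair) incidences in total, so at most $\sqrt{\veps'}n$ vertices are heavy for more than $\sqrt{\veps'}\ell$ sparse pairs; move them into $V_0$ and trim a few additional vertices per part to restore $|V_1|=\dots=|V_\ell|$. The surviving vertices then lose at most $(d+\veps')n+\sqrt{\veps'}n$ to sparse pairs and the budget closes. Two smaller issues: you did not budget the up to $n/\ell$ edges lost inside $v$'s own part (handled by taking the lower bound on the number of parts to be at least $\max\{M, C/\veps\}$), and you should keep rather than delete the edges touching $V_0$, since the lemma imposes nothing on them and deleting them would make the degree condition fail for vertices inside $V_0$ itself.
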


For an oriented graph $G'$ obtained from Lemma~\ref{lem:di-regularity}, with vertex partition $V(G') = V_1 \cup \dots \cup V_\ell$, we define the $(\veps, d)$-reduced digraph $R'$ on vertex set $[\ell]$ with $ij \in E(R')$ if and only if $(V_i, V_j)$ is an $\veps$-regular pair of density at least $d$ (in other words, $ij$ is an edge in $R'$ if and only if there is an edge from $V_i$ to $V_j$ in $G'$). The reduced graph $R'$ may not be an oriented graph even if $G$ is. However, the next lemma from~\cite{kelly} allows us to obtain a spanning oriented subgraph from $R'$ preserving the minimum semi-degree from $G$.

\begin{lemma}\label{lem:reduced_graph}
    For every $\veps \in (0,1)$ there exist numbers $M = M(\veps)$ and $n_0 = n_0(\veps) $ such that the following holds. Let $d \in [0, 1]$ and let $G$ be an oriented graph of order $n \ge n_0$ and let $R'$ be the reduced digraph with parameters $(\veps, d)$ obtained by applying the Lemma~\ref{lem:di-regularity} to $G$ with parameters $\veps$, $d$ and $M$. Then $R'$ has a spanning oriented subgraph $R$ such that $\delta^0(R) \ge (\delta^0(G)/|G|-(d+3\veps))|V(R)|$.
\end{lemma}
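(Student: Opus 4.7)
The plan is to apply the Diregularity Lemma (Lemma~\ref{lem:di-regularity}) with parameters $\veps$, $d$, and $M$ (to be chosen large enough for the later concentration to go through), producing the partition $V_0,V_1,\dots,V_\ell$ and the spanning subgraph $G'$. From $R'$ I would then extract $R$ by a \emph{density-biased random orientation}: for each pair $\{i,j\}$ with both $ij,ji\in E(R')$, independently keep direction $ij$ with probability $p_{ij}=d_{ij}/(d_{ij}+d_{ji})$ and keep $ji$ otherwise, where $d_{ij}=e^+_{G'}(V_i,V_j)/(|V_i||V_j|)$; for pairs in which only one direction is present in $R'$, keep that direction.

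First I would establish a weighted out-degree estimate. Fix $i\in[\ell]$ and $v\in V_i$; since $G'[V_i]$ is empty and, by the density dichotomy in the regularity lemma, $v$ has no out-neighbors in any $V_j$ with $ij\notin E(R')$, we have
\[
\delta^0(G)-(d+\veps)n \le d^+_{G'}(v) \le |V_0| + \sum_{j:\, ij\in E(R')} d^+_{G'}(v,V_j).
\]
Summing over $v\in V_i$, dividing by $|V_i|$, and using $|V_0|\le\veps n$ yields $\sum_{j:\, ij\in E(R')} d_{ij}|V_j| \ge \delta^0(G)-(d+2\veps)n$.

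Next, the key step uses that $G$ is an oriented graph, so $d_{ij}+d_{ji}\le 1$ for every pair; hence on bidirectional pairs $p_{ij}\ge d_{ij}$, and on unidirectional pairs $p_{ij}:=1\ge d_{ij}$. Since all $V_j$ share the common size $|V_1|$, this gives
\[
\mathbb{E}[d^+_R(i)]\cdot|V_1| = \sum_{j:\, ij\in E(R')} p_{ij}|V_j| \ge \sum_{j:\, ij\in E(R')} d_{ij}|V_j| \ge \delta^0(G)-(d+2\veps)n,
\]
so $\mathbb{E}[d^+_R(i)]\ge(\delta^0(G)/n-d-2\veps)\ell$, and the same bound holds for $\mathbb{E}[d^-_R(i)]$.

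Finally, $d^+_R(i)$ is a constant plus a sum of at most $\ell$ independent Bernoullis, so Hoeffding's inequality yields $\Pr[d^+_R(i)<\mathbb{E}[d^+_R(i)]-\veps\ell]\le\exp(-2\veps^2\ell)$. Taking $M$ large enough in terms of $\veps$ and applying a union bound over the $2\ell$ semi-degree events, with positive probability every vertex $i$ of $R$ satisfies $d^+_R(i),d^-_R(i)\ge(\delta^0(G)/n-d-3\veps)\ell$, giving the desired oriented subgraph. The crux of the argument — and what I initially expected to be the main obstacle — is that a uniform random orientation would lose $|B_i|/2$ in expectation on each vertex, where $B_i=\{j:ij,ji\in E(R')\}$ may be as large as $\ell$; the density-biased probabilities resolve this, since $d_{ij}+d_{ji}\le 1$ forces $p_{ij}\ge d_{ij}$, recovering the full weighted out-degree of $R'$ in expectation and reducing the remaining loss to the $\veps\ell$ concentration error.
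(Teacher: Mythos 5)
The paper does not prove this lemma: it is imported verbatim from Kelly--K\"uhn--Osthus~\cite{kelly}, where it appears as Lemma~14, so there is no internal proof to compare against. Your argument is correct and is, to the best of my knowledge, essentially the argument of that reference. Both pivot on the same two observations: the degree form of the Diregularity Lemma forces all of a vertex $v\in V_i$'s out-neighbours in $G'$ to lie in $V_0$ or in clusters $V_j$ with $ij\in E(R')$, giving $\sum_{j:\,ij\in E(R')} d_{ij}|V_j|\ge \delta^0(G)-(d+2\veps)n$; and orientedness of $G$ (hence of $G'$) gives $d_{ij}+d_{ji}\le 1$, so the density-proportional coin satisfies $p_{ij}\ge d_{ij}$ and the weighted out-degree of $R'$ is preserved in expectation. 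Your Hoeffding plus union bound finish, with $M$ chosen so that $2\ell\,e^{-2\veps^2\ell}<1$, is also the standard closing step. Two bookkeeping remarks, both immaterial: when passing from $\mathbb{E}[d^+_R(i)]\,|V_1|\ge\delta^0(G)-(d+2\veps)n$ to $\mathbb{E}[d^+_R(i)]\ge(\delta^0(G)/n-d-2\veps)\ell$ you are implicitly using $|V_1|\le n/\ell$, which follows from $\ell|V_1|=n-|V_0|\le n$ and is worth stating; and at a fixed $i$ the number of genuine Bernoulli summands is at most $\ell-1$ rather than $\ell$, which only improves the tail bound.
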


The following result implies that there is an almost perfect triangle tiling in the reduced graph.

\begin{theorem}[Keevash, Sudakov~\cite{keevash}] \label{thm:keevash}
    There are $c' > 0$ and $n_0 \in \N$ so that every oriented graph $G$ on $n\ge n_0$ vertices with minimum semi-degree $\delta^0(G) \ge (1/2 - c')n$ contains a consistently oriented triangle tiling covering all but at most 3 vertices.
\end{theorem}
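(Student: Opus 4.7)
The plan is to follow the absorbing method of R\"odl, Ruci\'nski, and Szemer\'edi, adapted to consistently oriented triangles in oriented graphs. A key preliminary observation is that the semi-degree condition $\delta^0(G) \ge (1/2 - c')n$ forces $G$ to be extremely close to a tournament: every vertex has $d(v) = d^+(v)+d^-(v) \ge (1-2c')n$, so the number of unordered pairs of distinct vertices spanning no edge of $G$ is at most $c'n^2 + n/2$. From this near-tournament structure, a Goodman-type counting argument yields that $G$ contains $\Omega(n^3)$ consistently oriented triangles and, through every vertex $v \in V(G)$, $\Omega(n^2)$ such triangles (roughly the $n^2/4$ triangles through $v$ in an arbitrary tournament completion, minus the $O(c'n^2)$ destroyed by missing edges incident to $v$).

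Next, I would build an absorbing set $A$. Call a constant-size set $S \subseteq V(G)$ with $3 \mid |S|$ a \emph{$v$-absorber} if both $G[S]$ and $G[S \cup \{v\}]$ admit perfect consistently oriented triangle tilings. Using the above triangle counting, one shows that every $v$ has polynomially many $v$-absorbers of some fixed size $3k$. A standard probabilistic selection (include each potential absorber independently with small probability, then alter to remove overlaps and ensure each $v$ is still covered sufficiently often) yields $A \subseteq V(G)$ with $|A| = o(n)$ such that for every $U \subseteq V(G) \setminus A$ with $|U| \le \beta n$ and $3 \mid |U|$, the induced oriented graph $G[A \cup U]$ admits a perfect triangle tiling.

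In $G' \coloneqq G \setminus A$, obtain an almost-perfect triangle tiling by applying Lemma~\ref{lem:di-regularity} and Lemma~\ref{lem:reduced_graph}: the reduced oriented graph $R$ satisfies $\delta^0(R) \ge (1/2 - O(c'+d+\veps))|V(R)|$, and iterated greedy triangle removal in $R$ (re-invoking the triangle counting as $R$ shrinks) produces an almost-perfect triangle tiling of $R$. This lifts to a triangle tiling of $G'$ missing only $O(\veps n)$ vertices via elementary counting inside each $\veps$-regular, $C_3$-oriented triple of clusters. Finally, use the absorbing property of $A$ to swallow the leftover vertices, after possibly discarding at most three uncovered vertices to satisfy the divisibility constraint.

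The main obstacle will be the triangle-counting through an \emph{arbitrary} vertex: the average count follows cheaply from near-tournament-ness, but a worst-case vertex might have both semi-neighborhoods concentrated in regions of $G$ with many missing edges or heavily skewed directional density. Making this pointwise bound robust likely requires splitting into cases based on local structure around $v$, or a more delicate double counting comparing edges between $N^+(v)$ and $N^-(v)$. A secondary concern is the iterated triangle removal in $R$: one must ensure the residual oriented graph still has sufficient semi-degree after many removals, which may require a more careful greedy rule (e.g.\ always removing a triangle that minimizes the resulting semi-degree loss, or reserving a small structured buffer for the final few rounds).
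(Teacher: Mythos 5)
This statement is cited as a black-box result from Keevash and Sudakov~\cite{keevash}; the paper gives no proof of it, so there is no in-paper argument to compare your sketch against. For what it is worth, the Keevash--Sudakov proof does not use the absorbing method you propose: their 2009 paper predates the widespread use of absorption for tiling problems and argues via the regularity method together with a more direct combinatorial analysis. The absorbing-method route to cyclic triangle tilings is essentially what Li and Molla carried out in~\cite{li-molla} (and what this paper adapts to $D_{1,1,2}$ in Section~\ref{sec:d112}), and when it succeeds it proves the stronger perfect-tiling statement, not merely ``all but at most $3$ vertices.''

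As written, your sketch has two genuine gaps. First, a parity error in the definition of an absorber: if $3 \mid |S|$ then $|S \cup \{v\}| \equiv 1 \pmod 3$, so $G[S \cup \{v\}]$ cannot have a perfect $C_3$-tiling. The Lo--Markstr\"om/R\"odl--Ruci\'nski--Szemer\'edi machinery instead works with \emph{linking} sets of size $3\ell - 1$ that serve a \emph{pair} $(x,y)$ simultaneously, i.e.\ sets $S$ such that both $G[S \cup \{x\}]$ and $G[S \cup \{y\}]$ tile perfectly, and one absorbs leftover sets $W$ with $3 \mid |W|$. Second, and more seriously, the claim that ``every $v$ has polynomially many $v$-absorbers'' does not follow from the triangle count through $v$. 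Having $\Omega(n^2)$ cyclic triangles through each vertex (which is Lemma~2.1 in~\cite{keevash}, reproduced here as Lemma~\ref{lem:keevash}) does \emph{not} imply that $V(G)$ is $(H_3,\beta,\ell)$-closed. When $G$ is $\gamma$-extremal --- close to an oriented graph with a cyclic partition $V_1 \to V_2 \to V_3 \to V_1$ and very few reverse edges --- two vertices in different parts need not be reachable by short linking sequences, and one cannot build a single global absorbing set. This is exactly why both Li--Molla and the proof of Theorem~\ref{thm:d112} in this paper have a separate extremal case (Lemma~\ref{lem:extremal} here). The obstacle you flag at the end --- a pointwise lower bound on triangles through a worst-case vertex --- is in fact the easy part and is already handled by Lemma~\ref{lem:keevash}; the real missing step in your proposal is establishing reachability/closedness (or, failing that, a separate extremal analysis), and that is not something ``a standard probabilistic selection'' can supply.
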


Next, we need the following result to obtain an almost perfect $D$-tiling in $G$ from an almost perfect $C_3$-tiling in $R$.

\begin{lemma}[Blow-up Lemma~\cite{blowuplemma}]\label{lem:blowup}
    Given a graph $F$ on $[s]$ and positive numbers $d$, $\Delta$, there is a positive real $\eta_0 = \eta_0(d, \Delta, s)$ such that the following holds for every positive number $t$ and every $0 < \eta \le \eta_0$. Let $F'$ be the graph obtained from $F$ by replacing each vertex $i \in V(F)$ with a set $V_i$ of $t$ new vertices and joining all vertices in $V_i$ to all vertices in $V_j$ whenever $ij$ is an edge of $F$. Let $G'$ be a spanning subgraph of $F'$ such that for every edge $ij \in E(F)$ the bipartite graph $G'[V_i,V_j]$ is $\eta$-regular and has minimum degree at least $dt$. Then $G'$ contains a copy of every subgraph $H$ of $F'$ with $\Delta(H) \le \Delta$. 
\end{lemma}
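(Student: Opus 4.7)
The plan is to follow the randomized greedy embedding strategy of Komlós, Sárközy, and Szemerédi. First I would pass from $\eta$-regularity to super-regularity: by removing from each $V_i$ the (at most $O(\eta t)$) vertices with too-small degree into some neighboring $V_j$ and rebalancing the parts, one can assume every remaining vertex of $V_i$ has degree at least $(d-\eta)t$ into every $V_j$ with $ij \in E(F)$, while preserving $\eta'$-regularity for a slightly larger $\eta'$. This upgrade lets us control every single vertex rather than only most of them, which is essential for handling the low-probability failure events during the random embedding.

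Next I would embed the vertices of $H$ into $G'$ one at a time in a carefully chosen order, maintaining for each not-yet-embedded $x \in V(H)$ a candidate list $L(x) \subseteq V_{\pi(x)}$, where $\pi(x)$ denotes the part of $F$ assigned to $x$. Initially $L(x) = V_{\pi(x)}$; whenever a neighbor $y$ of $x$ is embedded to some $v$, I replace $L(x)$ by $L(x) \cap N_{G'}(v)$. Since $\Delta(H) \le \Delta$, each list is restricted at most $\Delta$ times, and by $\eta$-regularity each restriction typically shrinks $|L(x)|$ by a factor close to the density, so the lists remain of size $\Omega(t)$ throughout. The image of each new vertex is picked at random from its current list; concentration (Azuma or Chernoff) combined with a union bound shows that with high probability all lists stay large and behave typically.

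The dominant obstacle, and the last step to carry out, is completing the embedding of the final vertices, where regularity alone is too weak to force a feasible assignment. The standard remedy is to designate in advance a small buffer $B_i \subseteq V_i$ in each part and a \emph{last wave} $L \subseteq V(H)$ containing every vertex of high residual degree, then embed $V(H) \setminus L$ into $V(G') \setminus \bigcup_i B_i$ via the greedy procedure above. The probabilistic analysis guarantees that each $x \in L$ ends with a candidate set $L(x) \subseteq B_{\pi(x)}$ of size proportional to $|B_{\pi(x)}|$, and the bipartite structure between the last wave and the buffers inherits super-regularity from the preprocessing step.

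To finish, within each buffer I would extract a perfect matching between $L \cap \pi^{-1}(i)$ and $B_i$ respecting the candidate lists, verifying Hall's condition by combining super-regularity with the sharp candidate-size bounds from the greedy phase, and piece these local matchings together across the edges of $F$ to obtain the full embedding of $H$. This final matching/completion phase is where the distinction between mere regularity and super-regularity becomes essential and where the bulk of the technical work sits; I expect the Hall-condition verification on the buffered vertices to be the main obstacle.
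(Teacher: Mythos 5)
The paper does not prove this statement: it is exactly the Komlós--Sárközy--Szemerédi Blow-up Lemma, quoted as a black box from~\cite{blowuplemma}, so there is no ``paper's own proof'' to compare against. Your sketch does trace the main contours of the original KSS argument (random greedy embedding with candidate lists, a reserved buffer in each cluster, a final matching/completion phase verified via a Hall-type condition), so as an outline of what the cited reference does it is pointed in the right direction. One concrete issue in your first step, however: you propose to ``pass from $\eta$-regularity to super-regularity'' by deleting low-degree vertices, but the hypothesis of the lemma already grants super-regularity --- every $G'[V_i,V_j]$ is assumed $\eta$-regular \emph{and} of minimum degree at least $dt$, which is precisely the $(\eta,d)$-super-regular condition. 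There are no low-degree vertices to delete, so this preprocessing is vacuous, and framing it as a necessary upgrade slightly misstates the role of the hypothesis. The remaining steps you describe (large candidate lists maintained under $\Delta$ restrictions, concentration via Azuma/Chernoff, buffers plus a Hall/König argument for the last wave) are the right ingredients but are stated at a level of generality that would require substantial further work --- in particular, the choice of embedding order, the precise definition of the buffer and last-wave vertices, and the verification of Hall's condition are where the genuine technical content of the Blow-up Lemma lives; none of it is carried out here. Since the lemma is used as a citation in the paper, reproducing its proof is not expected, but as a standalone ``proof'' this is an outline rather than an argument.
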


Our goal is to use Lemma~\ref{lem:blowup} to obtain a $D$-tiling in $G$ that covers each copy of $C_3$ in $R$.
For each copy $ijk$ of $C_3$ in $R$, we let $H=H(i,j,k)$ be the graph on vertex set $V_i \cup V_j \cup V_k$ with (unordered) edges being the underlying edges from $E^+_{G'}(V_i, V_j) \cup E^+_{G'}(V_j, V_k) \cup E^+_{G'}(V_k, V_i)$.
If the host oriented graph is a tournament, then this can be done directly: any copy of $K_{1,1,2}$ in the auxiliary graph $H$ guarantees a copy of $D$ in the tournament. However, this is no longer the case when the host oriented graph is not a tournament, even if it is a near-tournament. We circumvent this issue via the following simple lemma, which guarantees a $D$-tiling covering a copy of $K_{4,4,4}$ in the auxiliary graph of a near-tournament.

\begin{lemma} \label{lem:k444}
    Let $G$ be a near-tournament, and $V_1, V_2, V_3 \subseteq V(G)$ be disjoint sets, each of size four. We let $H$ be the graph on vertex set $V_1 \cup V_2 \cup V_3$ with (unordered) edges being the underlying edges from $E^+_{G}(V_1, V_2) \cup E^+_{G}(V_2, V_3) \cup E^+_{G}(V_3, V_1)$. If $H$ is isomorphic to $K_{4,4,4}$, then $G[V_1 \cup V_2 \cup V_3]$ contains a perfect $D$-tiling.
\end{lemma}

\begin{proof}
    Since $G$ is a near-tournament, each set $V_i$ contains at least one directed edge in $G$. Let $c_1c_2$, $c_3c_4$, and $c_5c_6$ be such edges inside parts $V_1, V_2$, and $V_3$, respectively. Label the remaining vertices of $H$ so that 
    $$V_1 = \{a_3, b_2, c_1, c_2 \}, \quad
    V_2 = \{a_1, b_3, c_3, c_4 \}, \quad
    V_3 = \{a_2, b_1, c_5, c_6 \}.$$

    The sets $\{a_1, b_1, c_1, c_2\}$, $\{a_2, b_2, c_3, c_4\}$, and $\{a_3, b_3, c_4, c_5\}$ induce vertex-disjoint copies of $D$.
\end{proof}

\begin{theorem}\label{thm:almost_perfect}
    There exist $c > 0$ and $n_0 \in \N$ such that, for every $\veps \in (0,1)$, every near-tournament~$G$ on $n\ge n_0$ vertices with minimum semi-degree $\delta^0(G) \ge (1/2 - c)n$ contains a $D$-tiling covering all but at most $\veps n$ vertices.
\end{theorem}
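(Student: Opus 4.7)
The plan is to combine regularity with Keevash's triangle-tiling theorem: produce an almost-perfect consistent triangle tiling of the reduced graph, then lift each triangle to a near-perfect $D$-tiling of the three associated clusters. Choose constants with $c \ll \veps_0 \ll d \ll \eta \ll \veps$. Applying Lemma~\ref{lem:di-regularity} with parameters $(\veps_0, d)$ and a suitably large $M$, and then Lemma~\ref{lem:reduced_graph}, we obtain a partition $V_0, V_1, \dots, V_\ell$ of $V(G)$ together with a reduced oriented graph $R$ on $[\ell]$ satisfying $\delta^0(R) \ge (1/2 - c - d - 3\veps_0)\ell$. For $c$ small enough, Theorem~\ref{thm:keevash} supplies a consistently oriented triangle tiling $\CT$ in $R$ covering all but at most three vertices.

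Write $m \coloneqq |V_1| = \dots = |V_\ell|$. For each triangle $ijk \in \CT$, oriented $V_i \to V_j \to V_k \to V_i$, the task is to find a $D$-tiling of $V_i \cup V_j \cup V_k$ that leaves at most $\eta m$ vertices uncovered. There are exactly three orientation-respecting embeddings of $D = D_{1,1,2}$ into such a triple, one for each choice of which cluster hosts the size-$2$ transitive part; placing roughly $m/4$ copies of each type exactly balances the three clusters (the linear system $2x + y + z = x + 2y + z = x + y + 2z = m$ forces $x = y = z = m/4$). To realize this, I would first select ordered matchings $M_i^*, M_j^*, M_k^*$ of size $\lfloor m/4 \rfloor$ inside the tournaments $G[V_i], G[V_j], G[V_k]$, which is immediate since a tournament on $m$ vertices contains a matching of size $\lfloor m/2 \rfloor$. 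The pairs in $M_i^*$ serve as the size-$2$ parts of the type-$i$ copies. The remaining vertices of each cluster are split evenly to play the size-$1$ roles for the other two types. After a standard super-regularization step that discards a small fraction of each cluster, one applies the Blow-up Lemma (Lemma~\ref{lem:blowup}) to a suitable tripartite auxiliary structure to match each pair in $M_i^*$ with an appropriate $(a, b) \in V_j \times V_k$ forming a type-$i$ copy of $D$, and symmetrically for $M_j^*$ and $M_k^*$.

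The main obstacle is precisely this last step, because $D_{1,1,2}$ contains an edge inside its size-$2$ part while the regularity partition leaves each $G'[V_i]$ empty, so the within-cluster edge cannot be extracted from the regularized structure. We finesse this by drawing that edge from the underlying tournament $G[V_i]$ itself (via the matchings $M_i^*$), pre-fixing one edge of each $D$-copy and thereby reducing the remainder to a genuinely tripartite embedding problem with no within-part edges, where the Blow-up Lemma applies directly. Finally, summing contributions, the total number of uncovered vertices is bounded by $|V_0| + 3m + \eta n \le \veps_0 n + 3n/M + \eta n < \veps n$ for suitable parameter choices, completing the proof.
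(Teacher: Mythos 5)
Your high-level route — regularity lemma, Kelly--K\"uhn--Osthus to pass to an oriented reduced graph, Keevash--Sudakov for an almost-perfect $C_3$-tiling of the reduced graph, then the Blow-up Lemma to lift each reduced triangle to a tiling of the corresponding three clusters — is exactly the paper's route, and your bookkeeping of the uncovered vertices is fine. The gap is in the lifting step. You correctly observe that $D_{1,1,2}$ has an edge inside its size-$2$ part while $G'[V_i]$ is empty, but the fix you propose — pre-selecting a matching $M_i^*$ in each cluster, pre-fixing one edge per $D$-copy, and then asking the Blow-up Lemma to extend each pre-fixed pair by an appropriate $(a,b)\in V_j\times V_k$ — is not something Lemma~\ref{lem:blowup} delivers as stated. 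That lemma embeds a bounded-degree spanning subgraph of a blow-up of $F$; once you prescribe which two vertices each copy must use, you are asking for a restricted embedding (effectively a hypergraph perfect matching on triples (pre-fixed pair, $a$, $b$)), and you would need a ``pre-embedded'' or contracted-vertex variant of the Blow-up Lemma with an intersection-regularity argument, none of which you set up. So the argument, as written, does not close.

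The clean way around the within-part edge, which is what the paper does, is to not embed $D_{1,1,2}$ at all: for each triangle $ijk$ of the reduced graph, form the tripartite \emph{underlying} graph $H(i,j,k)$ on $V_i\cup V_j\cup V_k$ using only the cross-cluster edges of $G'$, and use the Blow-up Lemma to find a perfect $K_{1,1,2}$-tiling of $H(i,j,k)$ (after the usual super-regularization and a small adjustment to make $|V_i|$ divisible by $4$). The point is that $K_{1,1,2}$, the complete tripartite graph with part sizes $1,1,2$, has \emph{no} edge inside its size-$2$ class, so it is a legitimate target for the Blow-up Lemma. Then, because $G$ is a tournament, the two vertices of each copy that land in the same cluster automatically span an oriented edge in $G$, and either orientation turns the $K_{1,1,2}$ into a copy of $D_{1,1,2}$. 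This single observation removes both your pre-fixed matchings and your explicit balancing of the three types (the perfect $K_{1,1,2}$-tiling does the balancing for you). One further small issue: your constant hierarchy $c\ll\veps_0\ll d\ll\eta\ll\veps$ makes $c$ depend on $\veps$, but the statement requires $c$ to be chosen first, uniformly in $\veps$; the correct order is to fix $c<c'$ from Theorem~\ref{thm:keevash} and then choose the regularity parameters relative to $\veps$.
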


\begin{proof}
    Let $c'>0$ and $n_0$ be given by Theorem~\ref{thm:keevash}. We set $c = c'/2$ and $d>0$ sufficiently small so that for $\veps < \eta_0(d, 12, 3)$ given by Lemma~\ref{lem:blowup} we have $c+d+\veps < c'$. Finally, let $M > \max\{5/\veps, n_0\}$.
    By moving at most eleven vertices from each $V_i$ to $V_0$ we can assume that $|V_1|=\dots=|V_\ell|$ from Lemma~\ref{lem:di-regularity} is a multiple of 12.
    We apply Lemma~\ref{lem:di-regularity} with parameters $\veps/3$ and $M$ to obtain $G'$ and the $(\veps/3, d)$-reduced graph $R'$ from $G'$.
    By Lemma~\ref{lem:reduced_graph}, there exists an oriented subgraph $R$ of $R'$ with $\delta^0(R)\ge (1/2-c-d-\veps) |V(R)|$.
    By Theorem~\ref{thm:keevash}, there exists a $C_3$-tiling in $R$ covering all but at most three vertices of $V(R)$. 

    For each copy $ijk$ of $C_3$ in $R$, we let $H=H(i,j,k)$ be the graph on vertex set $V_i \cup V_j \cup V_k$ with (unordered) edges being the underlying edges from $E^+_{G'}(V_i, V_j) \cup E^+_{G'}(V_j, V_k) \cup E^+_{G'}(V_k, V_i)$. It follows from Lemma~\ref{lem:blowup} that there exists a perfect $K_{4,4,4}$-tiling in $H$. By Lemma~\ref{lem:k444}, since $G$ is a near-tournament, this corresponds to a $D$-tiling in $G$ covering all vertices except the vertices from at most three parts and the exceptional part $V_0$. Therefore, this $D$-tiling misses at most
    $$ 3\cdot \frac{n}{\ell} + \frac{\veps n}{3} < \veps n$$
    vertices, where the last inequality follows the fact $\ell \ge M > 5/\veps$.
\end{proof}

\begin{remark}
    We highlight that the general framework used in this section can be used to obtain an almost perfect $D_{a,b,c}$-tiling for every $a\le b \le c$, not only for $D_{1,1,2}$.
\end{remark}

\subsection{Proof of Theorem~\ref{thm:d112}} \label{subsec:overview}

We let $G$ be a semi-regular near-tournament on $n$ vertices where $n$ is a multiple of 4. Our goal is to show that there is a perfect $D$-tiling in $G$. We break the proof into two cases depending on whether $G$ is $\gamma$-extremal or not. 
First, we use the so-called absorbing method to obtain a perfect $D$-tiling when $G$ is not $\gamma$-extremal.

\begin{lemma}[Absorbing Lemma - Lemma 1.1 in~\cite{lo-markstrom}] \label{lem:absorbing}
    Given $k, \ell \in \N$, and $\beta > 0$, there exists $n_0 \in \N$ so that the following holds. If $H$ is a $k$-uniform hypergraph on $n\ge n_0$ vertices and $V(H)$ is $(H, \beta, \ell)$-closed, then there exists a subset $U \subseteq V(H)$ of size $|U| \le \frac{(\beta/2)^{k}n}{4\ell k(k-1)} $ with $|U|$ divisible by $k$ such that, for every $W \subseteq V(H) \setminus U$ with $|W|\le \frac{(\beta/2)^{2k}n}{32\ell^2 k(k-1)^2}$ and $|W|$ divisible by $k$, there exists a perfect matching in $H[U \cup W]$.  
\end{lemma}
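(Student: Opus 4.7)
The plan is to follow the standard absorbing method (R\"odl--Rucin\'ski--Szemer\'edi) in the $\ell$-reachability framework of Lo and Markstr\"om. For each $k$-set $T \subseteq V(H)$, call $A \subseteq V(H) \setminus T$ an \emph{absorber for $T$} if both $H[A]$ and $H[A \cup T]$ admit perfect matchings. The set $U$ will be the vertex union of a pairwise disjoint family of absorbers, chosen ``rich'' in the sense that every $k$-set $T \subseteq V(H) \setminus U$ has many absorbers contained in $U$. Given such a $U$, the conclusion of the lemma follows by partitioning any admissible $W$ into $k$-sets and absorbing them one at a time using fresh absorbers inside $U$.

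The first step is to construct absorbers for an arbitrary $k$-set $T = \{x_1, \ldots, x_k\}$, all of the same size $t = k^2 \ell$. I will pick a $k$-edge $e = \{v_1, \ldots, v_k\}$ of $H$ disjoint from $T$, and then, invoking the $(H,\beta,\ell)$-reachability of each pair $(v_i, x_i)$, greedily pick $(H, v_i, x_i)$-linking $(k\ell-1)$-sets $S_1, \ldots, S_k$, each avoiding $e$, $T$, and the previously chosen $S_j$. Setting $A = e \cup S_1 \cup \cdots \cup S_k$, a perfect matching of $H[A]$ is obtained by taking, for each $i \in [k]$, the perfect matching of $H[S_i \cup \{v_i\}]$ guaranteed by the linking property, and a perfect matching of $H[A \cup T]$ is given by $\{e\}$ together with the perfect matchings of $H[S_i \cup \{x_i\}]$ for $i \in [k]$.

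The main obstacle is counting absorbers. The closed hypothesis is a reachability statement only, but it bootstraps to an edge count: every $(H,v,y)$-linking set includes $v$ in one of the $\ell$ edges of its guaranteed perfect matching on $S \cup \{v\}$, and summing over $y \neq v$ forces $|E(H)| = \Omega_{k,\ell}(\beta \, n^k)$. Combined with a greedy disjointness argument in which the $O(n^{k\ell-2})$ linking sets lost at each stage are negligible, this shows that every $T$ admits at least $c(k,\ell) \, \beta^{k+1} \, n^{k^2\ell}$ absorbers. Recovering the explicit constants $\tfrac{(\beta/2)^k}{4\ell k(k-1)}$ and $\tfrac{(\beta/2)^{2k}}{32 \ell^2 k (k-1)^2}$ that appear in the statement requires careful bookkeeping of linking-set losses in this step, together with an efficient choice of the sampling probability $p$ in the probabilistic step below.

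Finally, I select $U$ probabilistically: form a random subfamily $\CA'$ of the set of all absorbers by including each absorber independently with probability $p$ calibrated so that $p \cdot n^{k^2\ell}$ matches the target $|U|$. Standard first-moment and Chernoff bounds yield, with positive probability, that $\CA'$ has size concentrated about its mean, contains only $o(|\CA'|)$ pairs of intersecting members, and contains at least $\tfrac{1}{2} p \cdot c(k,\ell) \beta^{k+1} n^{k^2 \ell}$ absorbers for every $k$-set $T$. Deleting one member of each intersecting pair gives a pairwise disjoint family; taking its vertex union and trimming a constant number of absorbers to enforce divisibility of $|U|$ by $k$ produces the required $U$. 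For the absorption itself, partition $W$ into $k$-sets $T_1, \ldots, T_{|W|/k}$ and process them one by one, replacing the $H[A_j]$-matching by the $H[A_j \cup T_j]$-matching for a fresh $T_j$-absorber $A_j \subseteq U$. The bound on $|W|$ guarantees that at each step many unused absorbers remain for the current $T_j$, producing the desired perfect matching of $H[U \cup W]$.
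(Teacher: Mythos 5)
The paper does not prove this lemma; it is quoted verbatim from Lo and Markstr\"om's work~\cite{lo-markstrom}, so there is no in-paper argument to compare against. Your reconstruction follows the standard absorbing framework and your absorber gadget is valid: for a $k$-set $T=\{x_1,\ldots,x_k\}$, picking an edge $e=\{v_1,\ldots,v_k\}$ disjoint from $T$ and pairwise-disjoint $(H,v_i,x_i)$-linking $(k\ell-1)$-sets $S_i$ gives $A=e\cup\bigcup_i S_i$ of size $k^2\ell$ with $H[A]$ matched by $\bigcup_i M(S_i\cup\{v_i\})$ and $H[A\cup T]$ by $\{e\}\cup\bigcup_i M(S_i\cup\{x_i\})$.

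The gap is in the counting and it is not merely ``careful bookkeeping.'' Your bootstrap gives $|E(H)|=\Omega_{k,\ell}(\beta n^k)$ (and this is essentially tight: a hypergraph in which every vertex has degree $\Theta(\beta n^{k-1})$ can be $(\beta',\ell)$-closed), so the per-$T$ absorber count you obtain is $\Omega(\beta^{k+1} n^{k^2\ell})$ --- one factor of $\beta$ from the edge $e$ and $\beta^k$ from the $k$ linking sets. In the probabilistic selection, if $|U|=mt$ with $m$ absorbers of size $t=k^2\ell$, and $m\approx p\cdot|\mathcal{A}|$, then the $T$-absorbers surviving in $U$ number roughly $m\cdot (a_T/|\mathcal{A}|)$, and the lemma's constants force the density $a_T/|\mathcal{A}|$ to be $\gtrsim (\beta/2)^k$ (indeed $|W|/|U|\sim(\beta/2)^k/(8\ell(k-1))$, and each absorber handles one $k$-set of $W$). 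With $|\mathcal{A}|$ up to order $n^{k^2\ell}$ and $a_T\sim\beta^{k+1}n^{k^2\ell}$, the ratio is only $\sim\beta^{k+1}$, one factor of $\beta$ short. Concretely, your approach proves an absorbing set $U$ within the stated size bound but with absorbing power only $|W|\lesssim \beta^{2k+1}n$, strictly weaker than the required $|W|\lesssim(\beta/2)^{2k}n$ as $\beta\to 0$. Closing this gap requires an absorber construction or counting argument that does not pay a separate factor of $\beta$ for the auxiliary edge; tuning the sampling probability $p$ alone cannot recover it, since both $|U|$ and the per-$T$ absorber count scale linearly in $p$.
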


\begin{lemma} \label{lem:non_ext}
    Given $\ell \in \N$, there exist $\beta>0$ and $n_0\in \N$ so that the following holds. Suppose $G$ is a semi-regular near-tournament on $n\ge n_0$ vertices, for $n$ divisible by 4, and $V(G)$ is $(H_4(G),\beta,\ell)$-closed. Then, $G$ has a perfect $D$-tiling.
\end{lemma}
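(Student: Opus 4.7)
The plan is to execute the standard absorbing strategy: first build an absorbing set via Lemma~\ref{lem:absorbing}, then use Theorem~\ref{thm:almost_perfect} to cover almost all remaining vertices with a $D$-tiling, and finally swallow the small leftover into the absorbing set.

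More concretely, I first apply Lemma~\ref{lem:absorbing} to the $4$-uniform hypergraph $H_4(G)$ with parameters $k = 4$, $\ell$, and $\beta$; this is legitimate because $V(G)$ is $(H_4(G),\beta,\ell)$-closed by hypothesis. The output is an absorbing set $U \subseteq V(G)$ with $|U|$ divisible by $4$ and $|U| \le (\beta/2)^{4} n/(48\ell)$, such that any $W \subseteq V(G) \setminus U$ with $|W|$ divisible by $4$ and $|W| \le \alpha n$, where $\alpha \coloneqq (\beta/2)^{8}/(1152 \ell^{2})$, can be merged with $U$ into a perfect $D$-tiling of $G[U \cup W]$.

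Next, I apply Theorem~\ref{thm:almost_perfect} to the induced subtournament $G' \coloneqq G - U$. Since $G$ is semi-regular and $|U|/n$ is tiny, $\delta^0(G') \ge \lfloor (n-1)/2 \rfloor - |U| \ge (1/2 - c)|V(G')|$ for the absolute constant $c$ of Theorem~\ref{thm:almost_perfect}, provided $\beta$ has been chosen small enough in terms of $c$ and $\ell$. Choosing the $\veps$ input to Theorem~\ref{thm:almost_perfect} small enough that $\veps|V(G')| \le \alpha n$ then yields a $D$-tiling $\mathcal{T}$ of $G'$ leaving uncovered a set $W \subseteq V(G')$ with $|W| \le \alpha n$. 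Because $n$ and $|U|$ are divisible by $4$, and the number of vertices covered by $\mathcal{T}$ is a multiple of $4$, the leftover $|W|$ is also divisible by $4$. The absorbing property of $U$ then produces a perfect $D$-tiling of $G[U \cup W]$, which combined with $\mathcal{T}$ gives the required perfect $D$-tiling of $G$.

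All the real work is carried by the cited lemmas, so the only substantive step is the order of parameter choices: $\beta$ small in terms of $c$ and $\ell$, then $\veps$ small in terms of $\alpha$ (and hence of $\beta$ and $\ell$), and finally $n_0$ large in terms of all of these. This bookkeeping is routine and I do not anticipate any genuine obstacle in the non-extremal case itself; the true difficulty of the tileability theorem is pushed into the separate lemmas that establish the closedness hypothesis and handle the extremal case.
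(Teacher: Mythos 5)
Your proposal is correct and follows essentially the same absorbing argument as the paper: apply Lemma~\ref{lem:absorbing} to $H_4(G)$ to get the absorbing set $U$, apply Theorem~\ref{thm:almost_perfect} to $G-U$ (using that $|U|$ is small enough to preserve the semi-degree condition), observe the leftover is divisible by $4$, and absorb it into $U$. The only difference is cosmetic bookkeeping of the explicit constants $\alpha$, $\veps$, $\beta$; the paper's choice of $\beta^4 < 2^8 \cdot 3\ell c$ and $2^{15} 3^2 \ell^2 \veps < \beta^8$ encodes exactly the bounds you derive from the $k=4$ specialization of Lemma~\ref{lem:absorbing}.
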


\begin{proof}
    Let $c>0$ be given by Theorem~\ref{thm:almost_perfect}, and $\beta, \veps>0$ be such that $\beta^4 < 2^{8} \cdot 3 \ell  c \,$ and $2^{15} 3^2 \ell^2 \veps < \beta^{8}$. We assume $n_0$ is sufficiently large for Theorem~\ref{thm:almost_perfect} and Lemma~\ref{lem:absorbing} to hold for $n\ge n_0$.
    Let $U \subseteq V(G)$ be given by Lemma~\ref{lem:absorbing} applied to $H_4(G)$. The near-tournament induced by $V' \coloneqq V(G)\setminus U$ has $n' \coloneqq n-|U|$ vertices and minimum semi-degree at least 
    $$ \left\lfloor \frac{n-1}{2} \right\rfloor - |U| \ge
    \left( \frac{1}{2} - \frac{|U|}{n} \right) \cdot n' \ge
    \left( \frac{1}{2} - \frac{\beta^4}{2^{8} \cdot 3 \ell } \right) \cdot n' \ge
    \left( \frac{1}{2} - c \right) \cdot n'. $$
    By Theorem~\ref{thm:almost_perfect}, $G[V']$ contains a $D$-tiling $\D _1$ covering all but a set $W$ of at most 
    $$|W| \le \veps n < \frac{\beta^{8}n}{2^{15} 3^2 \ell^2}$$
    vertices. By the property of $U$ given by Lemma~\ref{lem:absorbing}, there is a perfect $D$-tiling $\D_2$ in $G[U \cup W]$. Therefore, $\D_1 \cup \D_2$ is a perfect perfect $D$-tiling in $G$.
\end{proof}

Next, we note that whenever $G$ does not satisfy the conditions of Lemma~\ref{lem:non_ext}, then $G$ is $\gamma$-extremal.

\begin{lemma}\label{lem:notclosedtoextremal}
    Given $\gamma >0$, there exist $c, \beta>0$ such that the following holds. 
    Suppose $G$ is an oriented graph on $n$ vertices.
    If $\delta^0(G)\ge(1/2-c)n$ and for every positive integer $\ell \le 5^{4}\cdot (2^{4}+1)$, $V(G)$ is not $(H_4(G),\beta,\ell)$-closed, then $G$ is $\gamma$-extremal.
\end{lemma}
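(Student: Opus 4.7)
The plan is the standard Lo--Markstr\"om \emph{non-closed $\Rightarrow$ extremal} framework adapted to the $4$-uniform hypergraph $H_4 = H_D$ encoding copies of $D = D_{1,1,2}$. First, under the hypothesis $\delta^0(G) \ge (1/2 - c)n$, I would establish a one-step reachability bound: every vertex $v$ lies in $\Omega(n^{3})$ copies of $D$ (by counting consistently oriented triangles through $v$ via~\cite{keevash} and then picking a suitable twin vertex), and a supersaturation argument on the four-partite structure of $D$ shows that all but $o(n)$ vertices $u \in V(G)$ share at least $\beta_0 n^{3}$ linking triples with $v$, i.e.\ $|\wt N_{H_4}(\beta_0, 1, v)| \ge (1-o(1))n$.

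Second, I would invoke the Lo--Markstr\"om partition lemma to obtain a partition $\CP = (V_1, \dots, V_d)$ of $V(G)$ into $d \le 4$ classes, each of size $\Omega(n)$ and $(H_4, \beta, \ell)$-closed with $\ell \le 5^{4} \cdot (2^{4}+1)$. The hypothesis that $V(G)$ itself is not closed forces $d \ge 2$, and the maximality of the partition implies that the lattice $L_\CP^\mu(H_4)$ is $2$-transferral-free: otherwise a pair $\mathbf{v}_1, \mathbf{v}_2 \in I_\CP^\mu(H_4)$ with $\mathbf{v}_1 - \mathbf{v}_2 = \mathbf{e}_i - \mathbf{e}_j$ would let us merge $V_i$ and $V_j$ into a single closed class, contradicting maximality.

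Third, I would classify the $\mu$-robust edge vectors of $H_4$ with respect to $\CP$. Since $D$ has four vertices and is a $C_3$-blow-up with part sizes $(1,1,2)$, the possible crossing index vectors under a $d$-partition are restricted (analogously to Claim~\ref{claim:int_pat}) to permutations of $(2,1,1)$, $(2,2)$, $(3,1)$, and, when $d=4$, $(1,1,1,1)$. A short case check shows that $d=4$ and $d=2$ each force two robust crossing vectors differing by a transferral $\mathbf{e}_i - \mathbf{e}_j$, contradicting $2$-transferral-freeness. So $d = 3$, and among the permutations $(2,1,1),(1,2,1),(1,1,2)$ exactly one may be $\mu$-robust; by relabelling, say it is $(1,1,2)$. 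The consistent cyclic orientation of $D$ then forces almost every crossing $D$-copy to have its pair of twin vertices in $V_3$ and to follow the orientation pattern $V_1 \to V_2 \to V_3 \to V_1$.

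Finally, I would convert this structure into $\gamma$-extremality. The degree condition together with the fact that $\Omega(n^{3})$ $D$-copies through each vertex must have index vector $(1,1,2)$ forces each $|V_i|$ to lie within $\gamma n$ of $n/3$, since a too-small part would starve a vertex in the cyclically previous part of out-neighbours. Moreover, a ``reverse'' edge (say oriented from $V_2$ to $V_1$) together with any pair of vertices of $V_3$ appropriately adjacent to its endpoints completes a copy of $D$ with a \emph{forbidden} index vector; since the reachability/degree bound guarantees $\Omega(n^{2})$ such completions per reverse edge while the total number of forbidden copies is $O(\mu n^{4})$, the number of reverse edges is at most $\gamma n^{2}$ provided $\mu$ is sufficiently small. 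The main obstacle will be this last translation: carefully balancing the constants $c \ll \beta \ll \mu \ll \gamma$ and making the supersaturation count between ``few bad $D$-copies'' and ``few reverse edges'' tight enough to handle all the forbidden permutations of $(2,1,1)$ together with the $(2,2)$- and $(3,1)$-type vectors simultaneously.
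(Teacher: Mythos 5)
Your first two steps track the paper closely: the degree and reachability bounds correspond to Lemma~\ref{lem:minimum_degree} and Lemma~\ref{lem:h4_reachable}, the partition-lemma invocation is Lemma~\ref{lem:closed_part}, and the merging-to-2-transferral-freeness step is exactly what the paper does (via Lemma~\ref{lem:merge}). Two small inaccuracies there: the reachability bound one actually obtains is only $|\wt N_{H_4}(\alpha,1,v)| = \Omega(n)$, not $(1-o(1))n$ --- if the latter held, $V(G)$ would already be close to $(H_4,\beta,2)$-closed, in tension with the hypothesis; and the partition lemma applied with the natural $\delta$ gives $d\le 5$ parts, not $d\le 4$.

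The genuine divergence, and the gap, is in steps three and four. Once $L_\CP^{\mu_4}(H_4)$ is $2$-transferral-free, the paper does \emph{not} try to classify the $\mu$-robust edge vectors of $H_4$ directly. Instead it proves Lemma~\ref{lem:free2free}: every robust triangle-vector $\mathbf{e}_i+\mathbf{e}_j+\mathbf{e}_k\in I_\CP^{\mu_3}(H_3)$ ``doubles'' to a robust $D$-vector $2\mathbf{e}_i+\mathbf{e}_j+\mathbf{e}_k\in I_\CP^{\mu_4}(H_4)$, so $2$-transferral-freeness for $H_4$ forces $2$-transferral-freeness for $H_3$, and then Lemma~\ref{lem:free2extremal} (Li--Molla, Lemmas 13--14) gives $\gamma$-extremality with no new case analysis. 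Your route --- directly classifying robust edge vectors of $H_4$ and converting ``few bad $D$-copies'' into ``few reverse edges'' --- would in effect re-derive the Li--Molla lemmas for a $4$-uniform hypergraph. That is not just longer; the key assertion, that ``a short case check shows $d=4$ and $d=2$ each force two robust crossing vectors differing by a transferral,'' is unsubstantiated and nontrivial. For an arbitrary partition, the index vector of a $D$-copy can be \emph{any} composition of $4$ into $d$ parts (there is no constraint analogous to Claim~\ref{claim:int_pat}, which only applies to the special family $\G(s)$, not to arbitrary $\CP$), so the ``restriction'' you invoke is vacuous, and ruling out transferral-free configurations when $d\in\{2,4,5\}$ is precisely the hard content that Li--Molla established for $H_3$ and that the paper piggybacks on rather than re-proving. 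Your final ``translation'' step inherits the same burden: the supersaturation bookkeeping between reverse edges and forbidden $D$-copies is plausible in outline but is essentially a fresh proof of the extremal-structure lemma, whereas the paper's $H_4\to H_3$ transfer makes that entire argument unnecessary.
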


We prove Lemma~\ref{lem:notclosedtoextremal} in Section~\ref{subsec:non_ext}.
Finally, Theorem~\ref{thm:d112} will follow from the following lemma, which will be proved in Section~\ref{subsec:ext}.

\begin{lemma}[Extremal case] \label{lem:extremal}
    There exist $c, \gamma >0$ and $n_0\in \N$ so that the following holds. Suppose that $n\ge n_0$ is divisible by $4$ and $G$ is a near-tournament on $n$ vertices. 
    If $\delta^0(G)\ge (1/2-c)n$ and $G$ is $\gamma$-extremal, then $G$ has a perfect $D$-tiling.
\end{lemma}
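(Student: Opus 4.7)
The plan is to exploit the near-blow-up structure of $G$ and tile using copies of $D$ that respect the $\gamma$-extremal partition; such copies must have index vector $(2,1,1)$, $(1,2,1)$, or $(1,1,2)$ (up to permutation). Starting from a $\gamma$-extremal partition $\CP=(V_1,V_2,V_3)$, I would first refine it by reassigning each vertex $v$ to the part $V_i$ that maximizes $d^+(v,V_{i+1})+d^-(v,V_{i-1})$ (indices modulo $3$). The resulting partition $(V_1',V_2',V_3')$ is still $O(\gamma)$-extremal with part sizes $(1/3\pm O(\gamma))n$. Let $B$ be the set of vertices that still carry more than $\sqrt{\gamma}\,n$ reverse edges under the refined partition; a standard averaging bound yields $|B|=O(\sqrt{\gamma}\,n)$.

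Next, I would absorb $B$ into copies of $D$ one vertex at a time: the semi-degree condition and the tiny size of $B$ let us find, for each $v\in B$, a copy of $D$ containing $v$ together with three non-exceptional vertices in any prescribed standard index vector. Because all three choices of index vector are available, the leftover parts $(W_1,W_2,W_3)$ automatically satisfy $|W_i|\ge n'/4$, where $n'=n-4|B|$. The target counts $a_i:=|W_i|-n'/4$ are then non-negative integers summing to $n'/4$; they represent the numbers of copies whose ``doubled'' part is to be placed in $W_i$ during the main tiling.

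For the main tiling, I would choose a matching $M_i\subseteq E(G[W_i])$ of size $a_i$ for each $i\in[3]$ (easy, since $|W_i|\gg a_i$ and $G[W_i]$ is a tournament). Each edge $e=\{u_1,u_2\}\in M_i$ needs to be extended to a copy of $D$ by selecting one singleton from each of $W_{i-1}\setminus V(M_{i-1})$ and $W_{i+1}\setminus V(M_{i+1})$ so that the four vertices induce a copy of $D$. Because of the refinement, every non-exceptional pair $(u_1,u_2)$ in $W_i$ shares $(1-o(1))|W_{i-1}|$ common forward in-neighbors in $W_{i-1}$ and $(1-o(1))|W_{i+1}|$ common forward out-neighbors in $W_{i+1}$, and the remaining cross-edge between the two singletons is forward for nearly all candidate pairs; thus each $e$ admits $(1-o(1))|W_{i-1}|\cdot|W_{i+1}|$ valid extensions.

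The chief obstacle is to realize the extensions for all three matchings simultaneously without reusing any singleton: $M_1$, $M_2$, and $M_3$ compete for the non-matching vertices across all three parts. I would encode the selection as a tripartite matching problem -- equivalently, a hypergraph matching between the edges of $M_1\cup M_2\cup M_3$ and the singletons in the three parts -- and verify Hall's condition using the $(1-o(1))$-density of valid extensions, with the demand and supply inside each $W_i$ already balanced by construction. Alternatively, an iterative greedy procedure that processes the matching edges one at a time while tracking the remaining candidates suffices, since at each step every unfinished edge still has many admissible extensions. Combined with the absorbing copies from the second paragraph, these form a perfect $D$-tiling of $G$.
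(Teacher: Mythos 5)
Your proposal shares the paper's skeleton (clean up the extremal partition, absorb exceptional vertices, then perform a balanced tripartite tiling), but it diverges at the crucial final step, and that is where a genuine gap appears.

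The paper's route for the main tiling is to equalize the three parts and make each size divisible by $4$ (using Lemma~\ref{lem:ith} to add copies of $D$ of prescribed type), pass to the undirected tripartite graph $G'$ of forward edges, and then invoke the Martin--Zhao theorem (Theorem~\ref{thm:blackbox}, via Corollary~\ref{cor:0mod4}) as a black box. Your version instead chooses matchings $M_i \subseteq G[W_i]$ of size $a_i = |W_i| - n'/4$ and tries to extend each matching edge to a copy of $D$ by picking one singleton from each of the other two parts. The bookkeeping is right: the singletons supplied by each $W_i$ (namely $|W_i| - 2a_i = a_{i-1}+a_{i+1}$) exactly equal the demand coming from $M_{i-1}\cup M_{i+1}$, so the problem is \emph{tight}. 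Precisely because it is tight, neither of the two arguments you offer closes the deal. An ``iterative greedy'' fails: by the time you reach the last edge of the last $M_i$, exactly one candidate singleton remains in each of the two relevant parts, and there is no reason the cross-edge between them should point forward. And ``verify Hall's condition'' is not available in the form you need: what you have is not a bipartite matching but a $3$-dimensional matching (each edge $e\in M_i$ must be assigned a pair $(x,y)\in S_{i+1}\times S_{i-1}$ with three simultaneous orientation constraints, and the singletons in all three parts are shared resources). Hall's theorem does not transfer to this setting, and the dense-hypergraph-matching arguments that do apply (e.g., Aharoni--Haxell, or the swapping/absorption arguments inside Martin--Zhao) are exactly the content you are implicitly re-deriving. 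So this step needs a real proof, not a one-line appeal to Hall or greedy.

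Two smaller remarks. First, your claim that controlling the index vectors of the absorbing copies is what guarantees $|W_i|\ge n'/4$ is a red herring: $|W_i|\approx n/3 > n'/4$ holds automatically once $|B|=O(\sqrt{\gamma}\,n)$, so you do not need (and should not rely on) prescribing index vectors while absorbing $B$. Second, to your credit, the refinement step you propose (reassigning each $v$ to the part maximizing $d^+(v,V_{i+1})+d^-(v,V_{i-1})$) is a nice move not present in the paper: one can check that after this refinement every vertex, including the ``bad'' ones, has $d^+(v,V_{i+1})$ and $d^-(v,V_{i-1})$ bounded below by a positive constant times $n$, which would indeed give the flexibility you want in the absorbing step. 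The paper instead absorbs $U_0$ with arbitrary copies (justified by Lemma~\ref{lem:minimum_degree}) and restores the mod-$4$ balance afterwards via Lemma~\ref{lem:ith}; both absorption approaches work, and yours would actually avoid the separate mod-$4$ repair. But the main tiling step is where the proof must actually be done, and as written yours is not.
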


\subsection{Non-extremal case}\label{subsec:non_ext}

The proof of Lemma~\ref{lem:notclosedtoextremal} is based on the following lemmas.
Recall that a \emph{2-transferral} $\mathbf{v}\in L_\CP^\mu(H)$ is a transferral such that $\mathbf{v}=\mathbf{v}_1-\mathbf{v}_2$ for some $\mathbf{v}_1,\mathbf{v}_2\in I_\CP^\mu(H)$.
The lattice $L_\CP^\mu(H)$ is \emph{$2$-transferral-free} if it contains no $2$-transferral.

\begin{lemma}[Lemma 3.8 in~\cite{han}]\label{lem:partition}
    Given $\delta,\delta'>0$, there exist constants $\alpha, \eta>0$ for which the following holds.
    Let $H$ be a $k$-uniform hypergraph on $n$ vertices satisfying $|\wt N_H(\alpha, 1, v)|\ge \delta' n$ for every $v\in V(H)$ and $\delta_1(H)\ge \delta\binom{n-1}{k-1}$.
    Then there exists a $(H,\eta,2^{\lfloor 1/\delta\rfloor-1})$-closed partition $\CP=\{V_1,\dots,V_d\}$ for some $d\le \min\{\lfloor 1/\delta\rfloor,\lfloor 1/\delta'\rfloor\}$ such that $|V_i|\ge (\delta'-\alpha)n$ for every $i\in[d]$.
\end{lemma}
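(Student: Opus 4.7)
The plan is to construct $\CP$ by iteratively enlarging the length parameter $\ell$ of a reachability-based partition, starting from the neighborhoods $\wt N_H(\alpha,1,v)$ given by the hypothesis and doubling $\ell$ each time two classes merge.

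\textbf{Compounding lemma.} The key primitive is the following closure property: if $u,v$ are $(H,\beta_1,\ell_1)$-reachable and $v,w$ are $(H,\beta_2,\ell_2)$-reachable, then $u,w$ are $(H,\beta',\ell_1+\ell_2)$-reachable for some $\beta' = \beta'(\beta_1,\beta_2,\ell_1,\ell_2,k)>0$. Given an $(H,u,v)$-linking $(k\ell_1-1)$-set $S_1$ and an $(H,v,w)$-linking $(k\ell_2-1)$-set $S_2$, for most such pairs $S_1, S_2$ are vertex-disjoint and avoid $\{u,v,w\}$; then $S_1\cup S_2\cup\{v\}$ is an $(H,u,w)$-linking $(k(\ell_1+\ell_2)-1)$-set, since the perfect matchings on $S_1\cup\{u,v\}$ and $S_2\cup\{v,w\}$ concatenate into perfect matchings of $S_1\cup S_2\cup\{v,u\}$ and $S_1\cup S_2\cup\{v,w\}$. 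A standard averaging/deletion argument yields $\Omega(n^{k(\ell_1+\ell_2)-1})$ good pairs, giving the claimed $\beta'$.

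\textbf{Building the partition and merging.} Define the auxiliary graph $G^*$ on $V(H)$ with $uv\in E(G^*)$ iff $u,v$ are $(H,\alpha,1)$-reachable. The hypothesis gives $\delta(G^*)\ge \delta' n$, so each connected component of $G^*$ has size $\ge \delta' n$ and diameter bounded in terms of $\delta'$; iterating the compounding lemma along BFS paths shows each component is $(H,\eta_0,\ell_0)$-closed for some $\eta_0>0$ and some bounded $\ell_0$. Let $\CP_0$ be this partition. Then, as long as $|\CP_t|>\lfloor 1/\delta\rfloor$, merge two classes as follows: pick any vertex $v$ in some $V_i\in\CP_t$; the $\ge \delta\binom{n-1}{k-1}$ edges through $v$ distribute their other endpoints across $|\CP_t|$ classes, and since $|\CP_t|>1/\delta$, a counting argument produces an edge $e\ni v$ with another endpoint $w\in V_j$ for some $j\neq i$. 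The single edge $e$ is an $(H,v,w)$-linking $(k-1)$-set, hence $v,w$ are $(H,\alpha_t,1)$-reachable for some $\alpha_t>0$; combining via the compounding lemma with the internal $(H,\eta_t,\ell_t)$-closure of $V_i$ and $V_j$ produces $(H,\eta_{t+1},2\ell_t+1)$-reachability between every $u'\in V_i$ and $w'\in V_j$, so $V_i\cup V_j$ is closed and we may merge. Iterate; since $|\CP_t|$ strictly decreases, at most $\lfloor 1/\delta\rfloor-1$ merges occur, yielding the final length bound $\ell\le 2^{\lfloor 1/\delta\rfloor - 1}$ and $|\CP|\le\lfloor 1/\delta\rfloor$.

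\textbf{Parameter tracking and size bound.} Each part retains size at least $(\delta' - \alpha)n$: the initial components of $G^*$ have size $\ge \delta' n$, and merging only increases them. Combined with disjointness this yields $d\le \lfloor 1/\delta'\rfloor$, so $d\le \min\{\lfloor 1/\delta\rfloor,\lfloor 1/\delta'\rfloor\}$. The constants $\alpha$ and $\eta$ are chosen so that iterating the compounding lemma $O(\lfloor 1/\delta\rfloor)$ times starting from $\alpha$ still leaves a positive output $\eta$; since the lemma degrades $\beta$ only polynomially per application and the number of applications depends only on $\delta,\delta',k$, the choice is possible.

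\textbf{Main obstacle.} The hardest step is the merging mechanism: one must show that whenever $|\CP_t|>\lfloor 1/\delta\rfloor$, the global minimum degree condition genuinely produces an edge whose combination with the closure inside two specific classes yields a linking structure between \emph{every} pair in the merged class, not merely one representative pair. This requires a tight double-count on how the $\delta\binom{n-1}{k-1}$ edges at a vertex distribute across classes and a careful two-step application of the compounding lemma, first to lift a single cross-class edge to a representative reachability and then to propagate it throughout the union. A secondary challenge is ensuring the cumulative polynomial decay of $\eta$ over $O(\lfloor 1/\delta\rfloor)$ iterated compoundings leaves a uniform positive constant.
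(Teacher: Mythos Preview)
The paper does not prove this lemma; it is quoted as Lemma~3.8 of~\cite{han} and used as a black box, so there is no in-paper argument to compare your sketch against.

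Your sketch nevertheless contains a genuine error in the merging step. You assert that once you have an edge $e\ni v$ with another endpoint $w\in V_j$, ``the single edge $e$ is an $(H,v,w)$-linking $(k-1)$-set, hence $v,w$ are $(H,\alpha_t,1)$-reachable''. This is false. A linking $(k-1)$-set $S$ for the pair $v,w$ must satisfy $|S|=k-1$ with both $S\cup\{v\}$ and $S\cup\{w\}$ edges of $H$; in particular $v,w\notin S$ (otherwise $|S\cup\{v\}|<k$ and no perfect matching exists). If $v,w\in e$ and you set $S=e\setminus\{v\}$, then $w\in S$ already, so $S\cup\{w\}=S$ has only $k-1$ elements. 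A single edge through both $v$ and $w$ never certifies $1$-reachability; what is needed is \emph{two} edges sharing a common $(k-1)$-set, one containing $v$ and the other $w$, and in fact $\Omega(n^{k-1})$ such common $(k-1)$-sets. The pigeonhole you describe (``$|\CP_t|>1/\delta$ forces a cross edge at $v$'') produces a cross edge, not this codegree-type structure, so the bridge between $V_i$ and $V_j$ is missing. The argument in~\cite{han} obtains the cross-part reachability by a different route and does not attempt to turn a single edge into a linking set.

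There is also a bookkeeping mismatch. Your initial length $\ell_0$, coming from the diameter of the $1$-reachability graph $G^*$, is of order $1/\delta'$ rather than $1$; after iterated doublings the final length would then carry a factor depending on $\delta'$, which does not match the stated bound $2^{\lfloor 1/\delta\rfloor-1}$. Likewise, the number of merges you perform is bounded by the number of initial components of $G^*$ (at most $\lfloor 1/\delta'\rfloor$), not by $\lfloor 1/\delta\rfloor-1$ as you claim, so the exponent you derive is not the one in the statement.
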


\begin{lemma} [Lemmas 13 and 14 from~\cite{li-molla}]\label{lem:free2extremal}
    Given $\gamma, \veps >0$, there exist $c, \mu_3>0$ such that the following holds. Suppose $G$ is an oriented graph on $n$ vertices such that $\delta^0(G)\ge(1/2-c)n$.
    If $\CP$ is a non-trivial $\veps$-partition of $V(G)$, $L_\CP^{\mu_3}(H_3(G))$ is $2$-transferral free, then $G$ is $\gamma$-extremal.
\end{lemma}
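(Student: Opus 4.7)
The plan is to use the 2-transferral-free hypothesis on $L_\CP^{\mu_3}(H_3(G))$ together with the triangle-richness forced by $\delta^0(G) \ge (1/2-c)n$ to pin down $\CP = (V_1, \dots, V_d)$ as a balanced 3-partition whose pairwise bipartite pieces are nearly fully oriented in a common cyclic direction, and then read off the $\gamma$-extremal conclusion.

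First, I would record that $\delta^0(G) \ge (1/2-c)n$ forces $G$ to contain at least $(1 - O(c)) n^3/24$ consistently oriented triangles, via the identity expressing the cyclic triangle count as $\binom{n}{3} - \sum_v \binom{d^+(v)}{2}$ combined with convexity applied to the near-regular out-degree sequence. Thus $|E(H_3(G))| = \Theta(n^3)$. Since $d$ is bounded by $1/\veps$, there are only $O(1)$ possible weight-$3$ edge-vectors, so for $\mu_3$ small enough in terms of $\veps$, pigeonhole ensures that a positive fraction of the triangles come from $\mu_3$-robust edge-vectors in $I_\CP^{\mu_3}(H_3(G))$.

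Second, I would perform a case analysis on $I_\CP^{\mu_3}(H_3(G))$ under the constraint that no pairwise difference of robust vectors equals $\mathbf{e}_i - \mathbf{e}_j$. This forbids \emph{adjacent} weight-$3$ vectors in $\mathbb{Z}_{\ge 0}^d$: for instance, $(1,1,1,0,\dots,0)$ cannot coexist robustly with any permutation of $(2,1,0,\dots,0)$ or $(1,2,0,\dots,0)$, and $(3,0,\dots,0)$ is incompatible with $(2,1,0,\dots,0)$. Non-triviality of $\CP$ rules out $d=1$; for $d = 2$, the only maximal 2-transferral-free subfamilies are, up to symmetry, $\{(3,0),(1,2)\}$, $\{(2,1),(0,3)\}$, and $\{(3,0),(0,3)\}$, each of which — together with the near-regularity of $G$ on any induced sub-tournament of size $\ge \veps n$ — forces an internal sub-tournament to be almost transitive, contradicting the cyclic triangle count such a sub-tournament must carry. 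An analogous accounting rules out $d \ge 4$, since the maximal 2-transferral-free family in the weight-$3$ simplex of $\mathbb{Z}^d_{\ge 0}$ is too small to absorb $\Theta(n^3)$ cross-triangles spread across more than three parts each of size $\ge \veps n$. This leaves $d = 3$ with $(1,1,1)$ as the only robust cross-vector (the corner vectors $(3,0,0),(0,3,0),(0,0,3)$ contribute at most $O(\max_i |V_i|^3)$ triangles). A direct triangle count then yields $\big||V_i| - n/3\big| \le \gamma n / 2$ for each $i$.

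Finally, for the orientation part of $\gamma$-extremality: every $(1,1,1)$-triangle falls into one of the two cyclic orientations on $(V_1, V_2, V_3)$. Considering the oriented tournament on three super-vertices representing the parts, the near-regularity plus the lower bound on cross-triangles forces this super-tournament to be cyclic (a transitive super-orientation would limit cross-triangles to $o(n^3)$); say $V_1 \to V_2 \to V_3 \to V_1$ is the majority. Writing $e_i^-$ for the reverse-edge count in the bipartite piece between $V_i$ and $V_{i+1}$, a standard substitution count gives $\Omega(\min(e_1^-,e_2^-,e_3^-) \cdot n)$ minority-oriented cross triangles, which must remain $o(n^3)$ to avoid exceeding the tournament triangle upper bound $n^3/24 + o(n^3)$ once combined with the majority count; tracking constants yields each $e_i^- \le \gamma n^2$. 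The principal obstacle is precisely this last step: 2-transferral-freeness alone does not discriminate the two cyclic orientations (both produce edge-vector $(1,1,1)$), so the orientation-extremality must be extracted from the tight tournament triangle bound together with the matching lower bound. This delicate accounting, alongside the sub-case analysis in step two, is the content of Lemmas 13 and 14 in~\cite{li-molla}.
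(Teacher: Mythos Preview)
The paper does not prove this lemma; it is quoted from~\cite{li-molla} (their Lemmas~13 and~14) and used as a black box, so there is no in-paper argument to compare against.

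Your overall strategy---exploit the tight window $(1/24-O(c))n^3\le |E(H_3)|\le n^3/24+O(n^2)$ to constrain the robust edge-vectors, then read off balance and orientation---is the right one and is indeed what~\cite{li-molla} carries out. However, the case analysis in your step~2 has real gaps. For $d=3$ you assert that 2-transferral-freeness leaves only $(1,1,1)$ (plus corners) as robust cross-vectors, but the family $\{(2,1,0),(0,2,1),(1,0,2)\}$ (and its reflection $\{(1,2,0),(0,1,2),(2,0,1)\}$) is also maximal 2-transferral-free and is incompatible with both $(1,1,1)$ and every corner $3\mathbf{e}_i$; ruling it out needs its own count (the total contribution of such triangles is at most $\tfrac14\sum_i|V_i|^2|V_{i+1}|\le n^3/27<n^3/24$, which does yield the contradiction, but this case is absent from your outline). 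Your $d=2$ justification is also off: you invoke ``near-regularity of $G$ on any induced sub-tournament of size $\ge\veps n$,'' but the global semi-degree condition gives no semi-degree control on $G[V_i]$ when $|V_i|\approx n/2$, so no ``almost transitive'' contradiction is available there. The operative mechanism is again a direct upper bound on each robust type (e.g.\ $(3,0)$-triangles $\le|V_1|^3/24$ and $(1,2)$-triangles $\le|V_1||V_2|^2/4$) summing to strictly below $n^3/24$ whenever both parts lie in $[\veps n,(1-\veps)n]$. You are right to flag at the end that this accounting is the substance of~\cite{li-molla}; but your summary of \emph{what} that case analysis establishes misstates the mechanism for $d=2$ and omits a configuration for $d=3$.
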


In order to apply Lemma~\ref{lem:partition} to the hypergraph $H_4(G)$, we first need to obtain lower bounds for $\delta_1(H_4)$ and $|\wt N_{H_4}(\alpha, 1, v)|$, which is the content of the following lemmas.

\begin{lemma}[Lemma 2.1 in~\cite{keevash}]\label{lem:keevash}
    Suppose that $c>0$ and $G$ is an oriented graph on $n$ vertices with $\delta^0(G)\ge (1/2-c)n$. Then, for each vertex $v\in V(G)$, we have that $e^+ (N^+(v),N^-(v))$ is at least $(1/8-2c)n^2$ and at most $(1/8+2c)n^2$.
\end{lemma}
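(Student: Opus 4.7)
The plan is to fix $v$ and partition $V(G)\setminus\{v\}=A\sqcup B\sqcup C$, where $A=N^+(v)$, $B=N^-(v)$, and $C$ is the set of vertices nonadjacent to $v$. From the semi-degree hypothesis we immediately get $|A|,|B|\in[(1/2-c)n,(1/2+c)n]$ and $|C|\le 2cn$; moreover $|A|+|B|\le n-1$, so by AM--GM $|A|\cdot|B|\le n^2/4$. The quantity $e^+(A,B)$ will be bounded below by direct double-counting of edges out of $A$, and bounded above by combining a symmetric lower bound on $e^+(B,A)$ with $e^+(A,B)+e^+(B,A)=e(A,B)\le|A|\cdot|B|\le n^2/4$.

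For the lower bound, since $va\in E(G)$ for every $a\in A$ we have $v\notin N^+(a)$, and hence the out-neighbors of $a$ are distributed among $A$, $B$, and $C$. Summing the out-degrees over $A$ yields
\[
\sum_{a\in A}d^+(a)\;=\;|E(G[A])|+e^+(A,B)+e^+(A,C).
\]
Rearranging and applying the lower bound $\sum_a d^+(a)\ge|A|(1/2-c)n$ together with the upper bounds $|E(G[A])|\le\binom{|A|}{2}$ and $e^+(A,C)\le|A|\cdot|C|$, then substituting $|A|\le(1/2+c)n$ and $|C|\le 2cn$, gives
\[
e^+(A,B)\;\ge\;|A|\Bigl[(1/2-c)n-\tfrac{|A|-1}{2}-|C|\Bigr]\;\ge\;(1/8-2c)n^2
\]
after collecting terms, where a $O(c^2)n^2$ surplus is absorbed into the $2c$ slack for $c$ small.

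For the upper bound, apply the mirror argument at $B$: now $v\in N^+(b)$ for every $b\in B$, so
\[
\sum_{b\in B}d^+(b)\;=\;|B|+|E(G[B])|+e^+(B,A)+e^+(B,C),
\]
and the same calculation yields $e^+(B,A)\ge(1/8-2c)n^2$ once $n$ is large enough to swallow the additional $-|B|$ correction. Combined with $|A|\cdot|B|\le n^2/4$, this gives
\[
e^+(A,B)\;\le\;\tfrac{n^2}{4}-e^+(B,A)\;\le\;(1/8+2c)n^2,
\]
as desired. The only real obstacle is bookkeeping: one must track the lower-order $O(n)$ and $O(c^2 n^2)$ terms carefully so that they are absorbed by the $2c$ slack, but no combinatorial insight beyond the two symmetric double-counting identities is required.
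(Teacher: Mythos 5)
The paper cites this lemma directly from Keevash--Sudakov without reproducing a proof, so there is no in-paper argument to compare against; your double-counting approach is correct and is the natural one. Both identities you state are exact: for $a\in A=N^+(v)$ one has $v\notin N^+(a)$, giving $\sum_{a\in A}d^+(a)=|E(G[A])|+e^+(A,B)+e^+(A,C)$, and for $b\in B=N^-(v)$ one has $v\in N^+(b)$, giving the extra $|B|$; combined with $|E(G[A])|\le\binom{|A|}{2}$, $|C|\le 2cn$, $|A|,|B|\in[(1/2-c)n,(1/2+c)n]$ and $e^+(A,B)+e^+(B,A)\le|A||B|$, the stated bounds follow for $c$ small.

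One small remark on the bookkeeping you flagged: the requirement that $n$ be ``large enough to swallow the $-|B|$ correction'' is in fact avoidable. Substituting the exact identity $|A|+|C|=n-1-|B|$ into the upper-bound computation gives
\[
e^+(A,B)\;\le\;|B|\Bigl[(1/2+c)n-\tfrac{|B|}{2}-\tfrac12\Bigr]\;\le\;\tfrac12\bigl((1/2+c)n-\tfrac12\bigr)^2\;\le\;\bigl(\tfrac18+\tfrac{c}{2}+\tfrac{c^2}{2}\bigr)n^2\;\le\;\bigl(\tfrac18+2c\bigr)n^2,
\]
with the $|C|$ and $+1$ terms cancelling exactly, so the bound holds for all $n$ and all $c\le 3$. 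This is cosmetic and does not change the substance of your argument.
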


\begin{lemma}\label{lem:minimum_degree}
    Suppose that $c>0$ and $G$ is an oriented graph on $n$ vertices with $\delta^0(G)\ge (1/2-c)n$, then each vertex of $G$ belongs to at least $(1/32-4c)n^3$ copies of $D$.
\end{lemma}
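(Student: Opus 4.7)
Fix a vertex $v \in V(G)$. The plan is to lower-bound the number $N_D(v)$ of copies of $D = D_{1,1,2}$ containing $v$ by decomposing according to the role $v$ plays within the copy. Since $D$ has trivial automorphism group, each copy of $D$ containing $v$ places $v$ in a uniquely determined position $u_i$ for some $i \in [4]$; write $N_i(v)$ for the count of copies with $v = u_i$. I will show $N_1(v), N_4(v) \ge (1/64 - 25c/32)n^3 - o(n^3)$, which then yields $N_D(v) \ge N_1(v) + N_4(v) \ge (1/32 - 25c/16)n^3 - o(n^3) \ge (1/32 - 2c)n^3$ for $n$ sufficiently large.

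A copy of $D$ with $v = u_1$ is specified uniquely by an ordered triple $(a, b, y)$ with $a, b \in N^+(v)$, $y \in N^-(v)$, and edges $a \to b$, $a \to y$, $b \to y$ (playing the roles of $u_2, u_3, u_4$ respectively). Setting $S_y \coloneqq N^+(v) \cap N^-(y)$, this gives
$$N_1(v) = \sum_{y \in N^-(v)} e(G[S_y]),$$
where $e(G[\cdot])$ counts oriented edges. Lemma~\ref{lem:keevash} yields $\sum_y |S_y| = e^+(N^+(v), N^-(v)) \ge (1/8 - 2c)n^2$. The assumption $\delta^0(G) \ge (1/2-c)n$ forces $|N^-(v)| \le (1/2+c)n$, so Cauchy--Schwarz gives $\sum_y |S_y|^2 \ge ((1/8-2c)^2/(1/2+c))n^3$. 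Moreover, since each vertex has at most $2cn$ non-neighbors, $N^+(v)$ contains at most $(c/2+c^2)n^2$ non-adjacent pairs, so $e(G[S_y]) \ge \binom{|S_y|}{2} - m_y$ where $\sum_y m_y \le (c/4 + O(c^2))n^3$. Assembling these bounds yields the claimed lower bound on $N_1(v)$.

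For $N_4(v)$, apply the same argument to the reverse oriented graph $G^r$, which also satisfies $\delta^0(G^r) \ge (1/2-c)n$. The canonical isomorphism $D^r \cong D$ sending $u_1 \leftrightarrow u_4$ and $u_2 \leftrightarrow u_3$ identifies copies of $D$ in $G$ with $v = u_4$ with copies of $D$ in $G^r$ with $v = u_1$, so the same calculation gives $N_4(v) \ge (1/64 - 25c/32)n^3 - o(n^3)$.

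The main obstacle lies in carefully tracking constants to ensure the cumulative loss stays within $2c$. The Taylor expansion of $(1/8-2c)^2/(1/2+c)$ at $c = 0$ produces a loss of $17c/16$ in $\sum_y |S_y|^2$, contributing $17c/32$ after halving in the assembly; the missing-pair correction contributes a further $c/4$, giving a per-term loss of $25c/32$. Doubling to combine $N_1(v) + N_4(v)$ yields total loss $25c/16 < 2c$, establishing $N_D(v) \ge (1/32 - 2c)n^3$.
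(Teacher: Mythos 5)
Your argument is correct and takes essentially the same approach as the paper: both decompose by which of the four vertices of $D$ is played by $v$, lower-bound two of the resulting terms via Lemma~\ref{lem:keevash} followed by Cauchy--Schwarz on the codegree sums $\sum_y|S_y|$, and correct for non-adjacent pairs. The only cosmetic differences are that you handle the second role by passing to the reverse digraph $G^r$ where the paper simply says ``similarly,'' and your constant bookkeeping is marginally tighter (loss $25c/32$ per role versus the paper's $c$).
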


\begin{proof}
    Let $v\in V(G)$. 
    We give a lower bound on the number of subsets $A\subseteq V(G)$ such that $v\in A$, $G[A]$ induces a copy of $D$ and $d_A^+(v)=1,d_A^-(v)=2$.
    For each $u\in N^+(v)$, if we pick two vertices $w,x$ from $|N^+(u)\cap N^-(v)|$, then both $vuw,vux$ are cyclic triangles.
    By Lemma~\ref{lem:keevash}, 
    $$\left( \frac{1}{8} - 2c \right) n^2 \le 
    \sum_{u\in N^+(v)}|N^+(u)\cap N^-(v)|=e^+(N^+(v),N^-(v))
    \le \left( \frac{1}{8} + 2c \right) n^2 .$$
    For every $w\in N^+(u)\cap N^-(v)$, there are at most $2cn$ vertices $x\in N^+(u)\cap N^-(v)$ such that $wx,xw\not\in E(G)$.
    Then we have that there are least
    \begin{align*}
        & \sum_{u\in N^+(v)}\left(\binom{|N^+(u)\cap N^-(v)|}{2}-\frac{2cn^2}{2}\right) \\
        = \,
        & \frac{1}{2}\sum_{u\in N^+(v)}|N^+(u)\cap N^-(v)|^2-\frac{1}{2}\sum_{u\in N^+(v)}|N^+(u)\cap N^-(v)|- \left( \frac{1}{2} +c \right) cn^3 \\
        \ge \,
        & \frac{e^+(N^+(v),N^-(v))^2}{2\delta^+(v)}-\frac{1}{2}e^+(N^+(v),N^-(v))-\left( \frac{1}{2} +c \right) cn^3 \\ 
        \ge \,
        & \frac{1}{2} \cdot \frac{(1/8 - 2c)^2}{(1/2 + c)} \cdot n^3- \frac{1}{2} \left( \frac{1}{8} + 2c \right) n^2 - \left( \frac{1}{2} +c \right) cn^3 \ge \left( \frac{1}{64} - 2c \right) n^3 
    \end{align*}
    many subsets $A$. Similarly, there are at least $\left( \frac{1}{64} - 2c \right) n^3$ many subsets $A\subseteq V(G)$ such that $v\in A$, $G[A]$ induces a copy of $D$ and $d_A^+(v)=2,d_A^-(v)=1$.
\end{proof}

\begin{lemma}[Lemma 19 from~\cite{li-molla}]\label{lem:h3_reachable}
    For every $\beta > 0$, there exists $c>0$ such that the following holds. If $G$ is an oriented graph with $\delta^0(G)\ge (1/2-c)n$, 
    then, for every $v\in V$, $|\wt N_{H_3}(\beta,1,v)|\ge (1/8-10\beta)n$.
\end{lemma}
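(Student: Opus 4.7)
The plan is to fix $v \in V(G)$ and, for each $u \in V(G) \setminus \{v\}$, define
\[
 T(u,v) := |\{\{a,b\} : \{v,a,b\} \text{ and } \{u,a,b\} \text{ are cyclic triangles in } G\}| = e^+\!\bigl(N^+(v) \cap N^+(u),\, N^-(v) \cap N^-(u)\bigr),
\]
so that $u \in \wt N_{H_3}(\beta,1,v)$ exactly when $T(u,v) \ge \beta n^2$. Since each pair contributing to $T(u,v)$ also contributes to $T(v) := e^+(N^+(v), N^-(v))$, Lemma~\ref{lem:keevash} yields the uniform upper bound $T(u,v) \le (1/8 + 2c)n^2$. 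I would combine this with a matching lower bound on $\sum_{u \in V(G)} T(u,v)$ and conclude by a standard averaging.

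For the lower bound, I would swap the order of summation in two symmetric ways. Fixing the out-neighbor $a \in N^+(v)$ of the two triangles yields
\[
 \sum_u T(u,v) \;=\; \sum_{a \in N^+(v)} e^+\!\bigl(N^+(a) \cap N^-(v),\, N^-(a)\bigr) \;=\; \sum_{a \in N^+(v)} e^+(N^+(a), N^-(a)) \;-\; \mathcal{E}_1,
\]
with error $\mathcal{E}_1 := \sum_{a \in N^+(v)} e^+(N^+(a) \cap N^+(v), N^-(a))$. Fixing instead the in-neighbor $b \in N^-(v)$ gives the same sum with the analogous error $\mathcal{E}_2 := \sum_{b \in N^-(v)} e^+(N^+(b), N^-(b) \cap N^-(v))$. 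Applying Lemma~\ref{lem:keevash} at each of the roughly $n-1$ vertices of $N^+(v) \cup N^-(v)$ and adding the two identities gives
\[
 2\sum_u T(u,v) \;\ge\; (n-1)(1/8 - 2c)\, n^2 \;-\; (\mathcal{E}_1 + \mathcal{E}_2).
\]

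A combinatorial expansion of $\mathcal{E}_1$ shows that its contributing triples $(a,x,y)$ correspond to cyclic triangles avoiding $v$ of type $(3,0)$ (three vertices in $N^+(v)$) or $(2,1)$ (two in $N^+(v)$, one in $N^-(v)$), with each type-$(3,0)$ triangle counted three times and each type-$(2,1)$ triangle counted once; symmetrically for $\mathcal{E}_2$ with $N^+(v)$ and $N^-(v)$ interchanged. Writing $T_{(i,j)}$ for the number of cyclic triangles of type $(i,j)$ not through $v$, one obtains $\mathcal{E}_1 + \mathcal{E}_2 = 3(T_{(3,0)} + T_{(0,3)}) + T_{(2,1)} + T_{(1,2)}$. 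Using that a tournament on $k$ vertices contains at most $k^3/24$ cyclic triangles, $T_{(3,0)}$ and $T_{(0,3)}$ are each at most $(1/192 + O(c))n^3$, while $T_{(2,1)} + T_{(1,2)}$ is bounded above by the total number of cyclic triangles of $G$, which by Lemma~\ref{lem:keevash} and the identity $T(G) = \tfrac{1}{3}\sum_w T(w)$ is at most $(1/24 + O(c))n^3$. These estimates yield $\mathcal{E}_1 + \mathcal{E}_2 \le (7/96 + O(c))n^3$ and hence $\sum_u T(u,v) \ge (5/192 - O(c))n^3$. The averaging inequality $\sum_u T(u,v) \le |\wt N_{H_3}(\beta,1,v)| \cdot T(v) + n \cdot \beta n^2$ now gives
\[
 |\wt N_{H_3}(\beta,1,v)| \;\ge\; \frac{(5/192 - \beta - O(c))\, n^3}{(1/8 + 2c)\, n^2} \;\ge\; (5/24 - 8\beta - O(c))\, n \;\ge\; (1/8 - 10\beta)\, n,
\]
once $c$ is chosen small enough relative to $\beta$.

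The main obstacle will be controlling $\mathcal{E}_1 + \mathcal{E}_2$: a one-sided double count alone produces an error of the same order as the main term, and even after symmetrization one must carefully classify the cyclic triangles of $G - v$ by their interaction with the bipartition $(N^+(v), N^-(v))$ in order to apply the sharp subtournament bound $k^3/24$ rather than the naive $\binom{k}{3}$.
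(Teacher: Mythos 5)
The paper does not give its own proof of this lemma — it is imported directly as Lemma~19 of Li and Molla — so there is no internal proof to compare against. That said, your argument is correct and self-contained.

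The key points all check out. The identity $T(u,v)=e^+\bigl(N^+(v)\cap N^+(u),\,N^-(v)\cap N^-(u)\bigr)$ is genuinely an equality (not just a lower bound) because both cyclic triangles $\{v,a,b\}$ and $\{u,a,b\}$ must use the same orientation of the shared edge, so every linking pair has the stated form; monotonicity then gives $T(u,v)\le T(v)$, and Lemma~\ref{lem:keevash} supplies the uniform cap $(1/8+2c)n^2$. The double count with $a$ fixed does produce $\sum_u T(u,v)=|\{(a,b,u):va,ab,bv,ua,bu\in E(G)\}|=\sum_{a\in N^+(v)}e^+\bigl(N^+(a)\cap N^-(v),N^-(a)\bigr)$, and the multiplicity count in $\mathcal{E}_1$ (type $(3,0)$ triangles counted three times via the three cyclic rotations, type $(2,1)$ once via the unique rotation whose first two vertices lie in $N^+(v)$) is right. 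The constants work: $\mathcal{E}_1+\mathcal{E}_2\le 6\cdot\tfrac{1}{192}+\tfrac{1}{24}=\tfrac{7}{96}$ (up to $O(c)$), so $\sum_u T(u,v)\ge(5/192-O(c))n^3$, and averaging against $T(v)\le(1/8+2c)n^2$ yields $|\wt N_{H_3}(\beta,1,v)|\ge(5/24-8\beta-O(c))n$, which exceeds the required $(1/8-10\beta)n$ for every $\beta\ge 0$ once $c$ is small. You also correctly identify why the symmetrized double count and the sharp $k^3/24$ cyclic-triangle bound are both necessary: with the crude $\binom{k}{3}$ bound or a one-sided count, the error overwhelms the main term.

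Two small things to tighten if this were written out in full. First, the displayed ``identities'' such as $\sum_u T(u,v)=\sum_a e^+(N^+(a),N^-(a))-\mathcal{E}_1$ and $\mathcal{E}_1+\mathcal{E}_2=3(T_{(3,0)}+T_{(0,3)})+T_{(2,1)}+T_{(1,2)}$ are only equalities up to an $O(cn^3)$ defect arising from the at most $2cn$ vertices that are not neighbours of $v$; since you are proving a lower bound, these defects are harmless, but they should be stated rather than suppressed as equalities. Second, you apply the sharp bound of $k^3/24$ cyclic triangles to the induced subgraph on $N^+(v)$, which is an oriented graph and not necessarily a tournament; this is still valid, since completing any oriented graph on $k$ vertices to a tournament can only add cyclic triangles and every tournament on $k$ vertices has at most $(k^3-k)/24$ of them, but the one-line justification is worth including. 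Neither point affects correctness.
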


\begin{lemma}\label{lem:h4_reachable}
    Let $\beta, c > 0$ satisfy Lemma~\ref{lem:h3_reachable}, and $ \alpha < \left( 1/8-10\beta-2c \right) \beta^2 $. Then the following holds. 
    If $G$ is an oriented graph with $\delta^0(G)\ge (1/2-c)n$, 
    then for every $v\in V(G)$, $|\wt N_{H_4}(\alpha,1,v)|\ge \left( 1/8-10\beta-2c \right) \beta n$.
\end{lemma}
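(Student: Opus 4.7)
The plan is to prove the stronger statement that every $y \in \widetilde{N}_{H_3}(\beta, 1, v)$ already lies in $\widetilde{N}_{H_4}(\alpha, 1, v)$; combined with Lemma~\ref{lem:h3_reachable}, this yields $|\widetilde{N}_{H_4}(\alpha, 1, v)| \ge (1/8 - 10\beta)n$, which is comfortably at least $(1/8 - 10\beta - 2c)\beta n$ for $\beta < 1$.

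Fix $y \in \widetilde{N}_{H_3}(\beta, 1, v)$ and set $P \coloneqq N^+(v) \cap N^+(y)$ and $Q \coloneqq N^-(v) \cap N^-(y)$; note $P$ and $Q$ are disjoint since $G$ is oriented. The key structural step is to show that $\{u, w\}$ is an $(H_3, v, y)$-linking 2-set if and only if one of $u, w$ lies in $P$, the other in $Q$, and the edge between them points from $P$ to $Q$. Indeed, if $u \to w$, then $\{v, u, w\}$ being a cyclic triangle forces $u \in N^+(v)$ and $w \in N^-(v)$, while $\{y, u, w\}$ being cyclic likewise forces $u \in N^+(y)$ and $w \in N^-(y)$. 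Consequently the $\beta n^2$ linking 2-sets provided by hypothesis translate into $e^+(P, Q) \ge \beta n^2$.

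Next, I extend each such pair $(u, w) \in P \times Q$ with $u \to w$ to an $(H_4, v, y)$-linking 3-set $\{u, w, z\}$ by choosing any $z \in N^+(u) \cap Q$ with $z \ne w$ and $z \sim w$ (an edge in either direction). The 4-set $\{v, u, w, z\}$ then spans a copy of $D_{1,1,2}$ with parts $V_1 = \{v\}$, $V_2 = \{u\}$, $V_3 = \{w, z\}$: all five inter-part edges $v \to u, u \to w, u \to z, w \to v, z \to v$ follow from the memberships $u \in P$ and $w, z \in N^+(u) \cap Q$, and the intra-$V_3$ edge is the assumed $z \sim w$. Replacing $v$ by $y$ gives the analogous copy of $D$ on $\{y, u, w, z\}$, so $\{u, w, z\}$ is indeed $(H_4, v, y)$-linking.

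To count such 3-sets, write $k_u \coloneqq |N^+(u) \cap Q|$ for $u \in P$. Since $\delta^0(G) \ge (1/2 - c)n$ implies every vertex has at most $2cn$ non-neighbors, the number of unordered adjacent pairs in $N^+(u) \cap Q$ is at least $\binom{k_u}{2} - k_u cn$. Summing over $u \in P$, using $\sum_u k_u = e^+(P, Q) \ge \beta n^2$ together with Cauchy--Schwarz and $|P| \le n$ (so $\sum_u k_u^2 \ge \beta^2 n^3$), the total count is at least $\tfrac12 \beta^2 n^3 - cn^3 - O(n^2)$, which exceeds $\alpha n^3$ for $n$ sufficiently large by the hypothesis $\alpha < (1/8 - 10\beta - 2c)\beta^2$. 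Hence $y \in \widetilde{N}_{H_4}(\alpha, 1, v)$, completing the proof. The only step that requires genuine care is the orientation analysis pinning down the 2-linking sets as exactly the $P \to Q$ edges; the remainder is standard averaging on the bipartite oriented graph induced by $P$ and $Q$.
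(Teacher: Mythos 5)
Your proof takes a genuinely different and, in my view, cleaner route than the paper's. The paper fixes $v$ and builds an auxiliary bipartite graph $G_v=(A,B)$ with $A\subseteq\wt N_{H_3}(\beta,1,v)\cap N(v)$ and $B=\binom{V\setminus\{v\}}{2}$, then counts $2$-walks $u$--$\{x,y\}$--$w$ and, via a convexity/averaging argument on $G_v$, deduces that $\beta|A|$ vertices of $A$ lie in $\wt N_{H_4}(\alpha,1,v)$. You instead prove the stronger containment $\wt N_{H_3}(\beta,1,v)\subseteq \wt N_{H_4}(\alpha,1,v)$ directly: you pin down $(H_3,v,y)$-linking $2$-sets as exactly the $P\to Q$ edges, where $P=N^+(v)\cap N^+(y)$ and $Q=N^-(v)\cap N^-(y)$, and you extend each such edge $uw$ by a vertex $z\in N^+(u)\cap Q$ adjacent to $w$ to get an $(H_4,v,y)$-linking $3$-set. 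The structural verification that $\{v,u,w,z\}$ and $\{y,u,w,z\}$ both induce $D_{1,1,2}$ is correct, and the resulting bound $(1/8-10\beta)n$ is stronger than the stated $(1/8-10\beta-2c)\beta n$.

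There is one genuine gap, however, in the last sentence of your counting step. Your argument gives at least roughly $(\tfrac12\beta^2-c)n^3$ linking $3$-sets, and you assert this exceeds $\alpha n^3$ "by the hypothesis $\alpha<(1/8-10\beta-2c)\beta^2$." That inference is not automatic: the hypothesis bounds $\alpha$ by $(1/8-10\beta-2c)\beta^2$, but you need $\alpha<\tfrac12\beta^2-c$, and $(1/8-10\beta-2c)\beta^2\le \tfrac12\beta^2-c$ holds only when $c\lesssim \tfrac38\beta^2$. The lemma statement only stipulates that $\beta,c$ satisfy Lemma~\ref{lem:h3_reachable}, which does not by itself force $c$ to be that small relative to $\beta^2$. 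You should add an explicit hypothesis such as $c\le \tfrac14\beta^2$ (harmless in context, since $c$ is ultimately chosen far smaller, cf.\ the choice of parameters in Lemma~\ref{lem:closed_part}), or replace the appeal to the hypothesis on $\alpha$ by a direct constraint $\alpha<\tfrac14\beta^2$ and $c<\tfrac18\beta^2$, say. With that patch the argument is complete; without it the final "which exceeds $\alpha n^3$" does not follow from the stated assumptions.
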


\begin{proof}
    Fix $v\in V(G)$ and consider an auxiliary bipartite graph $G_v=(A,B)$ where $B=\binom{V(G)\setminus\{v\}}{2}$ and $A\subseteq\wt N_{H_3}(\beta,1,v)\cap (N^+(v)\cup N^-(v))$.
    For an ordered tuple $(x,y,z)$, there is an edge between $z\in A$ and $\{x,y\}\in B$ only if $xy\in E(G)$ and both $vxy,zxy$ are cyclic triangles in $G$ (in other words, $z$ is adjacent to $\{x, y\}$ in $G_v$ only when
    $\{ x, y\}$ is an $(H_3, z, v)$-linking set).
    From Lemma~\ref{lem:h3_reachable} and the definition of $\wt N_{H_3}(\beta,1,v)$ it follows that one can assume that $|A|=(1/8-10\beta-2c)n$ and that the degree of every vertex $u \in A$ is $d_{G_v}(u) = \beta n^2$.
    
    For each $2$-walk in $G_v$ that starts from some $u\in A$ and ends at some $w\in A\setminus \{u\}$, by definition there is some $\{x,y\}\in B$ such that both $G[\{u,v,x,y\}]$ and $G[\{w,v,x,y\}]$ contain a copy of $D$, that is, $\{x,y,w\}$ is a $(H_4,u,v)$-linking set.
    It suffices to show that there are at least $\beta |A|$ many vertices in $A$ that are contained in at least $\alpha n^3$ many $2$-walks starting and ending in $A$.

    First, note that $|E(G_v)| = (1/8-10\beta-2c)\beta n^3$.
    The number of $2$-walks starting from $A$ is
     \begin{align*}
        \sum_{u\in A}\sum_{\{x,y\}\in N(u)} d_{G_v}(\{x,y\})=&\sum_{\{x,y\}\in B}d_{G_v}(\{x,y\})^2\\\ge& \frac{(\sum_{\{x,y\}\in B}d_{G_v}(\{x,y\}))^2}{|B|}\\\ge& \frac{|E(G_v)|^2}{n^2/2}\ge 2(1/8-10\beta-2c)^2\beta^2 n^4.
    \end{align*}
    On average, each vertex in $A$ is in at least $2(1/8-10\beta-2c)\beta^2 n^3$ many $2$-walks and each vertex in $A$ is in at most $(1/8-10\beta-2c)\beta n^3$ such $2$-walks.
    Then there are at least $\beta|A|$ vertices contained in at least $\alpha n^3$ many $2$-walks, since
    $$ \beta |A| \cdot \left( 1/8-10\beta-2c \right) \beta n^3 + (1-\beta |A|) \alpha n^3 < 2 \left( 1/8-10\beta-2c \right) \beta^2 n^3 . \eqno\qedhere$$
\end{proof}

Now we can apply Lemma~\ref{lem:partition} to $H_4(G)$ and obtain the following.

\begin{lemma}\label{lem:closed_part}
    There exist $c, \eta >0$ and $n_0\in \N$ so that the following holds. Suppose that $G$ is an oriented graph on $n \ge n_0$ vertices such that $\delta^0(G)\ge (1/2-c)n$.
    Then there exists $\CP=\{V_1,\dots,V_d\}$ a $(H_4(G),\eta,2^{4})$-closed partition of $V(G)$ such that $d\le 5$ and $|V_i|\geq 10^{-5} n$ for every $i\in [d]$.
\end{lemma}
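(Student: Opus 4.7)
The plan is to apply Lemma~\ref{lem:partition} directly to the $4$-uniform hypergraph $H_4 = H_4(G)$. To do this, I need to verify its minimum-degree and reachable-neighborhood hypotheses, choosing the constants so that the output parameters match those required in the statement.

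First, I will pick $\delta \in (1/6, 3/16)$, say $\delta = 0.17$, so that $\lfloor 1/\delta \rfloor = 5$. With this choice, the partition produced by Lemma~\ref{lem:partition} has at most $d \le 5$ parts and closure parameter exactly $2^{\lfloor 1/\delta \rfloor - 1} = 2^{4}$, matching the statement. For $c$ sufficiently small and $n$ sufficiently large, Lemma~\ref{lem:minimum_degree} gives
$$\delta_1(H_4(G)) \ge (1/32 - 2c) n^3 \ge \delta \binom{n-1}{3},$$
verifying the minimum-degree hypothesis of Lemma~\ref{lem:partition}.

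Next, I will fix $\delta' > 10^{-5}$ (say $\delta' = 10^{-4}$) and feed $\delta, \delta'$ into Lemma~\ref{lem:partition} to obtain constants $\alpha, \eta > 0$. To verify the reachable-neighborhood hypothesis $|\wt N_{H_4(G)}(\alpha, 1, v)| \ge \delta' n$, I will pick $\beta > 0$ small enough that $(1/8 - 10\beta - 2c)\beta^2 > \alpha$ (the prerequisite of Lemma~\ref{lem:h4_reachable}) and $(1/8 - 10\beta - 2c)\beta \ge \delta'$ (so that its conclusion delivers the desired bound). A feasible $\beta$ exists provided $\alpha$ is small compared to $\delta'^2$, which can be arranged since in Lemma~\ref{lem:partition} any sufficiently small $\alpha$ remains valid (as $\alpha$ decreases, its hypothesis on $\wt N$ becomes easier and its conclusion $|V_i| \ge (\delta' - \alpha) n$ only becomes stronger). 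Shrinking $c$ further if necessary so that Lemma~\ref{lem:h3_reachable} is applicable, Lemma~\ref{lem:h4_reachable} then yields
$$|\wt N_{H_4(G)}(\alpha, 1, v)| \ge (1/8 - 10\beta - 2c)\beta n \ge \delta' n$$
for every $v \in V(G)$.

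With both hypotheses in place, Lemma~\ref{lem:partition} supplies an $(H_4(G), \eta, 2^{4})$-closed partition $\CP = \{V_1, \dots, V_d\}$ of $V(G)$ with $d \le 5$ and $|V_i| \ge (\delta' - \alpha) n \ge 10^{-5} n$ for every $i \in [d]$, as required. There is no substantive obstacle; the only technical point is the simultaneous choice of $c, \alpha, \beta, \eta$ so that the hypotheses of all cited lemmas are satisfied, which is essentially parameter bookkeeping.
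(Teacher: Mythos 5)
Your proposal is correct and follows essentially the same route as the paper: choose $\delta \in (1/6, 3/16)$ so that $\lfloor 1/\delta\rfloor = 5$ while keeping $\delta\binom{n-1}{3}$ below the $(1/32-2c)n^3$ bound from Lemma~\ref{lem:minimum_degree}, fix $\delta' = 10^{-4}$, and verify the reachability hypothesis of Lemma~\ref{lem:partition} via Lemma~\ref{lem:h4_reachable} with a small enough $\beta$. The paper makes the same (implicit) assumption you flag explicitly --- that the $\alpha$ from Lemma~\ref{lem:partition} may be taken small enough to satisfy $\alpha < (1/8-10\beta-2c)\beta^2$ --- and simply fixes $\delta = 6/35$ and $\beta = 10^{-2}$ rather than describing the feasibility region, so the two proofs differ only in bookkeeping.
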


\begin{proof}
    Throughout the proof, we assume that $c>0$ is sufficiently small and $n_0$ is sufficiently large so that all inequalities hold true. Let $\delta'=10^{-4}$, $\delta=6/35$, and $\alpha, \eta > 0$ be given by Lemma~\ref{lem:partition}. We also set $\beta = 10^{-2}$ and assume $\alpha < \left( 1/8-10\beta-2c \right) \beta^2$.
    
    By Lemma~\ref{lem:minimum_degree}, we have that 
    $$\delta_1(H_4(G)) \ge 
    \left( \frac{1}{32} - 4c \right) n^3 > 
    \delta \binom{n-1}{3}.$$
    By Lemma~\ref{lem:h4_reachable}, we have that 
    $$|\wt N_{H_4}(\alpha,1,v)|\ge \delta' n$$
    for every $v\in V(G)$.
    Then, by Lemma~\ref{lem:partition}, there exists a $(H_4(G),\eta,2^{4})$-closed partition $\CP= \{ V_1,\dots,V_d \}$ such that $d\le 5$ and $|V_i|\ge (\delta'-\alpha)n\ge 10^{-5} n$.
\end{proof}

In order to obtain a non-trivial $\veps$-partition $\CP$ for which $L_\CP^{\mu_3}(H_3(G))$ is $2$-transferral free, so that we can apply Lemma~\ref{lem:free2extremal}, we will need the following lemmas.

\begin{lemma}\label{lem:merge}
    Suppose $\ell, d \in \N$, $0< \beta' \le 1/d$, and $\mu_4 > 0$. Then there are $\beta>0$ and $n_0 \in \N$ such that the following holds. If $G$ is an oriented graph on $n\ge n_0$ vertices and $\CP = \{V_1, \dots, V_d\}$ is a $(H_4,\beta',\ell)$-closed partition such that there exists a $2$-transferral $\mathbf{u_i}-\mathbf{u_j}\in L_{\CP}^{\mu_4}(H_4)$, then merging $V_i,V_j$ gives a $(H_4,\beta,5\ell+1)$-closed partition.
\end{lemma}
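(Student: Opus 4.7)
The plan is to verify $(H_4,\beta,5\ell+1)$-reachability for every pair of vertices lying in a common part of the merged partition $\CP' = (V_1,\dots,V_{i-1},V_i\cup V_j, V_{i+1},\dots,V_{j-1},V_{j+1},\dots,V_d)$; that is, to exhibit at least $\beta n^{20\ell+3}$ linking sets of size $4(5\ell+1)-1 = 20\ell+3$ for each such pair. For pairs lying in an unchanged part, or for pairs with both endpoints in $V_i$, or both in $V_j$, a pre-existing $(H_4,\beta',\ell)$-linking $(4\ell-1)$-set is extended to the desired size by adjoining $4\ell+1$ vertex-disjoint additional copies of $D$; since $\mathbf{v}_1\in I_\CP^{\mu_4}(H_4)$ forces at least $\mu_4 n^4$ copies of $D$ in $G$, greedy packing yields $\Omega(n^{16\ell+4})$ such extensions of each initial linking set, giving $\Omega(n^{20\ell+3})$ linking sets in total.

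The substantial case is a cross pair $x\in V_i$, $y\in V_j$. The idea is to use the 2-transferral $\mathbf{v}_1-\mathbf{v}_2=\mathbf{e}_i-\mathbf{e}_j$ to interchange the roles of $x$ and $y$ via bridges through copies of $D$ with index vectors $\mathbf{v}_1$ and $\mathbf{v}_2$. By a simple averaging argument, the set $V_i^\star$ of vertices lying in at least $\mu_4 n^3/2$ copies of $D$ with index vector $\mathbf{v}_1$ has size $\Omega(n)$; symmetrically define $V_j^\star$ using $\mathbf{v}_2$. I would then choose $x^\star\in V_i^\star$ together with a $D$-copy $e_1=\{x^\star,a_1,a_2,a_3\}$ of index $\mathbf{v}_1$, and symmetrically $y^\star\in V_j^\star$ with $e_2=\{y^\star,a_1',a_2',a_3'\}$ of index $\mathbf{v}_2$. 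The key structural observation is that $\mathbf{v}_1-\mathbf{e}_i=\mathbf{v}_2-\mathbf{e}_j$, so the triples $\{a_1,a_2,a_3\}$ and $\{a_1',a_2',a_3'\}$ have \emph{identical} distribution across the parts of $\CP$; this lets me pair them as $(a_s,a_s')$ with both vertices in a common part of $\CP$. Using the $(H_4,\beta',\ell)$-closedness of $\CP$, I select pairwise vertex-disjoint $(4\ell-1)$-linking sets $R_x,R_y,R_1,R_2,R_3$ for the pairs $(x,x^\star),(y,y^\star),(a_1,a_1'),(a_2,a_2'),(a_3,a_3')$, and set
$$S := R_x\cup R_y\cup R_1\cup R_2\cup R_3 \cup \{x^\star,y^\star,a_1,a_2,a_3,a_1',a_2',a_3'\},$$
a set of size $5(4\ell-1)+8 = 20\ell+3$. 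Then $S\cup\{x\}$ has the perfect $D$-matching built from $R_x\cup\{x\}$, the edge $e_1$, the matching on $R_y\cup\{y^\star\}$, and the matchings on $R_s\cup\{a_s'\}$ for $s\in[3]$; symmetrically, $S\cup\{y\}$ has the perfect matching built from $R_y\cup\{y\}$, $e_2$, $R_x\cup\{x^\star\}$, and $R_s\cup\{a_s\}$.

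To finish I count the ordered configurations $(x^\star,e_1,y^\star,e_2,R_x,R_y,R_1,R_2,R_3)$: there are $\Omega(n)$ choices of $x^\star$, $\Omega(n^3)$ of $e_1$, and symmetrically $\Omega(n^4)$ choices for the pair $(y^\star,e_2)$, together with $\Omega(n^{4\ell-1})$ for each of the five linking sets; disjointness constraints cost only a lower-order factor since each fixed piece has bounded size while $n\to\infty$. This produces $\Omega(n^{8+5(4\ell-1)}) = \Omega(n^{20\ell+3})$ configurations, and since at most $O_\ell(1)$ configurations can yield the same underlying set $S$, we obtain at least $\beta n^{20\ell+3}$ distinct linking $(20\ell+3)$-sets for a suitably small $\beta>0$. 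The main obstacle (and the true content of the lemma) is arranging the same-part pairing of $\{a_s\}$ with $\{a_s'\}$: this is feasible precisely because $\mathbf{v}_1-\mathbf{v}_2$ is a \emph{2}-transferral (rather than a longer alternating sum of robust edge vectors), which is what guarantees that the residual triples share an index vector with respect to $\CP$ and can be linked using the pre-existing closedness.
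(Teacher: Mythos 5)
Your argument is correct and is essentially the approach in the paper: for pairs already in the same part of~$\CP$, extend a $(4\ell-1)$-linking set by $4\ell+1$ vertex-disjoint extra $D$-copies; for a cross pair $x\in V_i$, $y\in V_j$, use the 2-transferral to pick $D$-copies $e_1,e_2$ with $\mathbf{i}_\CP(e_1)=\mathbf{v}_1$, $\mathbf{i}_\CP(e_2)=\mathbf{v}_2$, observe that $\mathbf{v}_1-\mathbf{e}_i=\mathbf{v}_2-\mathbf{e}_j$ so the residual triples can be paired within common parts, and then glue with five $(4\ell-1)$-linking sets, giving $(20\ell+3)$-sets for the new reachability parameter $5\ell+1$. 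The paper's version is slightly more streamlined (it counts ordered tuples directly and lower-bounds the total number of $D$-copies via the closedness of one large part rather than via $\mathbf{v}_1\in I_\CP^{\mu_4}(H_4)$, and it skips the explicit averaging step defining $V_i^\star$, $V_j^\star$), but these are cosmetic differences; the decomposition of the linking set and the matchings witnessing both $S\cup\{x\}$ and $S\cup\{y\}$ are the same as yours.
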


Lemma~\ref{lem:merge} is analogous to Lemma 11 in~\cite{li-molla}, and its proof is almost identical to the one presented in~\cite{li-molla}. For the sake of completeness, we present the proof of Lemma~\ref{lem:merge} in Appendix~\ref{appendix}.

\begin{lemma}\label{lem:free2free}
    Let $G$ be an oriented graph on $n$ vertices such that $\delta^0(G)\ge (1/2-c)n$ and assume that $\mu_4 < \mu_3^2/3 - c$.
    If $\CP$ is a non-trivial partition of $V(G)$ and $L_\CP^{\mu_4}(H_4(G))$ is $2$-transferral free, then $L_\CP^{\mu_3}(H_3(G))$ is also $2$-transferral free.
\end{lemma}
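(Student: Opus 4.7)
The plan is to prove the contrapositive: starting from a 2-transferral $\mathbf{e}_i - \mathbf{e}_j = \mathbf{v}_1 - \mathbf{v}_2$ in $L_\CP^{\mu_3}(H_3(G))$ (with $\mathbf{v}_1, \mathbf{v}_2 \in I_\CP^{\mu_3}(H_3(G))$ and indices $i \neq j$), I will construct a 2-transferral in $L_\CP^{\mu_4}(H_4(G))$. The key structural observation is that $D = D_{1,1,2}$ consists of two consistently oriented triangles sharing an edge: if $T_1 = abu$ and $T_2 = abv$ are distinct copies of $C_3$ in $G$ sharing the edge $ab$, and $uv$ or $vu$ is an edge of $G$, then $\{a, b, u, v\}$ induces a copy of $D$ with parts $\{a\}, \{b\}, \{u, v\}$.

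Next I will count pairs of triangles sharing an edge via Cauchy--Schwarz. Fix $\mathbf{v} \in \{\mathbf{v}_1, \mathbf{v}_2\}$, let $\mathcal{T}(\mathbf{v}) = \{T \in H_3(G) : \mathbf{i}_\CP(T) = \mathbf{v}\}$, and for each edge $e$ of $G$ let $m_\mathbf{v}(e) = |\{T \in \mathcal{T}(\mathbf{v}) : e \in T\}|$. Since $\sum_e m_\mathbf{v}(e) = 3|\mathcal{T}(\mathbf{v})| \ge 3\mu_3 n^3$ and $|E(G)| \le n^2/2$, Cauchy--Schwarz yields $\sum_e m_\mathbf{v}(e)^2 \ge 18\mu_3^2 n^4$, so after subtracting the diagonal contribution, the number of ordered pairs of distinct $\mathbf{v}$-triangles sharing an edge is at least $(18\mu_3^2 - o(1))n^4$. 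For such a pair $(T_1, T_2)$ with shared edge $e$, the unique vertices $u \in T_1 \setminus T_2, v \in T_2 \setminus T_1$ must lie in a common part $V_k$ with $k$ in the support of $\mathbf{v}$ (forced by $\mathbf{i}_\CP(T_1) = \mathbf{i}_\CP(T_2) = \mathbf{v}$), and whenever $u, v$ are adjacent in $G$---which holds for all but $O(cn^4)$ pairs since $\delta^0(G) \ge (1/2-c)n$ limits non-adjacent pairs in each $V_k$ to $O(cn^2)$---the induced $D \subseteq G$ has index $\mathbf{v} + \mathbf{e}_k$. Since each such $D$ is counted by exactly two ordered pairs and the support of $\mathbf{v}$ has size at most $3$, pigeonhole produces $k_\mathbf{v}$ in the support of $\mathbf{v}$ with at least $(3\mu_3^2 - O(c))n^4$ copies of $D$ of vector $\mathbf{v} + \mathbf{e}_{k_\mathbf{v}}$, so $\mathbf{v} + \mathbf{e}_{k_\mathbf{v}} \in I_\CP^{\mu_4}(H_4(G))$ (using $\mu_4 < \mu_3^2/3 - c$).

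Setting $\mathbf{w}_r = \mathbf{v}_r + \mathbf{e}_{k_r} \in I_\CP^{\mu_4}(H_4(G))$ for $r = 1, 2$, their difference
\[
\mathbf{w}_1 - \mathbf{w}_2 = (\mathbf{e}_i - \mathbf{e}_j) + (\mathbf{e}_{k_1} - \mathbf{e}_{k_2})
\]
is a transferral---and hence a 2-transferral in $L_\CP^{\mu_4}(H_4(G))$---when $k_1 = k_2$, or $k_1 = j$ and $k_2 \neq i$, or symmetrically $k_2 = i$ and $k_1 \neq j$. The hard part will be the degenerate configurations $(k_1, k_2) = (j, i)$ (which makes $\mathbf{w}_1 = \mathbf{w}_2$) and the case when $i, j, k_1, k_2$ are all distinct (which yields a non-transferral sum). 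I expect to resolve these using the substantial slack afforded by the hypothesis $\mu_4 < \mu_3^2/3 - c$ versus the threshold $3\mu_3^2$ extracted above: either some $\mathbf{v} \in \{\mathbf{v}_1, \mathbf{v}_2\}$ admits two distinct robust extensions $\mathbf{v} + \mathbf{e}_k, \mathbf{v} + \mathbf{e}_{k'} \in I_\CP^{\mu_4}(H_4(G))$ whose difference is itself a 2-transferral in $L_\CP^{\mu_4}$, or the ``other'' sub-$C_3$'s of the $D$'s in the bad case---triangles with index vectors of the form $\mathbf{v}_1 + \mathbf{e}_j - \mathbf{e}_{k'}$ for varying $k'$---must collectively contribute enough to place a new vector in $I_\CP^{\mu_3}(H_3(G))$, against which the previous steps can be rerun with a triangle class that avoids the degeneracy.
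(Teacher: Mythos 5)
Your plan follows the same broad outline as the paper's proof: argue the contrapositive, note that $D_{1,1,2}$ is two consistently oriented triangles glued along an edge, and apply Cauchy--Schwarz to promote a robust $C_3$-vector to a robust $D$-vector. However, the way you apply Cauchy--Schwarz is exactly what creates the gap you flag at the end, and the slack in $\mu_4 < \mu_3^2/3 - c$ does not close it. You sum $m_\mathbf{v}(e)^2$ over \emph{all} edges of $G$, which only controls the \emph{total} number of pairs of $\mathbf{v}$-triangles sharing an edge, and you then pigeonhole to obtain a single coordinate $k$ with $\mathbf{v}+\mathbf{e}_k \in I_\CP^{\mu_4}(H_4(G))$. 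Since the pairs can be concentrated entirely on one third-vertex class --- all the mass on a single $k$ in the support of $\mathbf{v}$ --- no amount of slack in the constant forces a second robust extension from the same $\mathbf{v}$. This leaves the degenerate cases $(k_1,k_2)=(j,i)$ and $i,j,k_1,k_2$ pairwise distinct genuinely unresolved; the closing sketch is speculation, not an argument.

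The missing idea is to run Cauchy--Schwarz \emph{coordinatewise}. For each $i$ in the support of $\mathbf{v}$, restrict to the edges $e$ with $\mathbf{i}_\CP(e) = \mathbf{v}-\mathbf{e}_i$: every $\mathbf{v}$-triangle contributes to the resulting sum (once or twice), so it is still at least $\mu_3 n^3$, while the denominator is now only the number of edges in this one index class, at most $n^2$, rather than $|E(G)| \le n^2/2$ over all of $G$. Cauchy--Schwarz then yields at least $(\mu_3^2/3 - c)n^4 > \mu_4 n^4$ copies of $D$ of index $\mathbf{v}+\mathbf{e}_i$ for \emph{every} coordinate $i$ in the support, not just one pigeonholed $k$. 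Therefore, for any $\mathbf{v} \in I_\CP^{\mu_3}(H_3(G))$ with two distinct coordinates $a\neq b$ in its support, both $\mathbf{v}+\mathbf{e}_a$ and $\mathbf{v}+\mathbf{e}_b$ lie in $I_\CP^{\mu_4}(H_4(G))$ and their difference $\mathbf{e}_a - \mathbf{e}_b$ is already a $2$-transferral, so one never needs to compare $\mathbf{w}_1$ against $\mathbf{w}_2$ at all. The only $C_3$-vectors without two distinct support coordinates are of the form $3\mathbf{e}_a$, and since $3\mathbf{e}_a - 3\mathbf{e}_b$ is never of the form $\mathbf{e}_i-\mathbf{e}_j$, any pair $\mathbf{v}_1,\mathbf{v}_2$ witnessing a $2$-transferral in $L_\CP^{\mu_3}(H_3(G))$ must contain at least one vector outside this exceptional family --- which is precisely how the paper finishes.
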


\begin{proof}
We show the following stronger relation between $I_\CP^{\mu_3}(H_3(G))$ and $I_\CP^{\mu_4}(H_4(G))$.

\begin{claim*}
    Let $\CP = (V_1, \dots, V_d)$ be a non-trivial partition of $V(G)$. Assume that $\mu_4 < \mu_3^2/3 - c$ and $\mathbf{e_i}+\mathbf{e_j}+\mathbf{e_k} \in I_\CP^{\mu_3}(H_3(G))$ for some (not necessarily distinct) $i,j,k \in [d]$, then $2\mathbf{e_i}+\mathbf{e_j}+\mathbf{e_k} \in I_\CP^{\mu_4}(H_4(G))$.
\end{claim*}

\begin{claimproof}
    By the definition of $I_\CP^{\mu_3}(H_3(G))$, there are at least $\mu_3 n^3$ consistently oriented triangles with vertices in $V_i$, $V_j$, and $V_k$, that is,
    $$\sum_{uv\in E(V_j,V_k)}d^{-,+}(uv,V_i)\ge \mu_3 n^3.$$
    Hence, the number of copies of $D$ that have one vertex in each of $V_j,V_k$ and two vertices in $V_i$ is at least
    \begin{align*}
        \sum_{uv\in E(V_j,V_k)}\binom{d^{-,+}(uv,V_i)}{2}-cn^2 \ge \frac{\left(\sum_{uv\in E(V_j,V_k)}d^{-,+}(uv,V_i)\right)^2}{3|E(V_j,V_k)|}-cn^4 \ge \left( \frac{\mu_3^2}{3} - c \right) n^4>\mu_4 n^4,
    \end{align*}
    where, for each edge $uv\in E(V_j,V_k)$, we need to subtract the pairs $\{w, z\}\subseteq V_i$ such that $wz,zw\not\in E(G)$, and there are at most $cn\cdot |V_i| \le cn^2$ of these pairs in total.
\end{claimproof}
    
    By the Claim, if there exists $i \neq j$ such that $2\mathbf{e_i}+\mathbf{e_j} \in I_\CP^{\mu_3}(H_3(G))$, then $$3\mathbf{e_i}+\mathbf{e_j}, 2\mathbf{e_i}+2\mathbf{e_j} \in I_\CP^{\mu_4}(H_4(G)),$$
    and $\mathbf{e_i}-\mathbf{e_j} \in L_\CP^{\mu_4}(H_4(G))$ is a 2-transferral.
    Similarly, if there are distinct $i,j,k$ such that $\mathbf{e_i}+\mathbf{e_j}+\mathbf{e_k} \in I_\CP^{\mu_3}(H_3(G))$, then $L_\CP^{\mu_4}(H_4(G))$ has a 2-transferral.   
    Since every pair of vectors $\textbf{v}_1,\textbf{v}_2\in I_\CP^{\mu_3}(H_3(G))$ that witness a $2$-transferral in $L_\CP^{\mu_3}(H_3(G))$ contains a vector as above, we conclude the lemma. 
\end{proof}

We are now ready to prove Lemma~\ref{lem:notclosedtoextremal}.

\begin{proof}[Proof of Lemma~\ref{lem:notclosedtoextremal}]
    Let $\mu_3 >0$ be given by Lemma~\ref{lem:free2extremal} applied with $\veps = 10^{-5}$. Also, let $\mu_4>0$ be so that we can apply Lemma~\ref{lem:free2free}. 
    By Lemma~\ref{lem:closed_part}, there exists a $(H_4(G),\eta,2^{4})$-closed partition $\CP'=\{V_1,\dots,V_d\}$ of $V(G)$ such that $d\le 5$ and $|V_i|\geq 10^{-5} n$ for every $i\in [d]$.
    If $L_{\CP'}^{\mu_4}(H_4(G))$ contains a $2$-transferral $\textbf{u}_i-\textbf{u}_j$ for distinct $i,j\in [d]$,
    then we consider the new partition $\CP'-V_i-V_j+(V_i\cup V_j)$.
    We may continue to merge in this way until we get a partition $\CP$ such that $L_{\CP}^{\mu_4}(H_4(G))$ is $2$-transferral free. Thus, by Lemma~\ref{lem:free2free}, $L_{\CP}^{\mu_3}(H_3(G))$ is also $2$-transferral free.
    By Lemma~\ref{lem:merge}, we may assume that $\CP$ is a $(H_4(G),\beta,\ell)$-closed partition of $V(G)$ for some $\beta>0$ and $\ell\le 5^{4}\cdot (2^{4}+1)$.   
    If $|\CP|=1$, then $V(G)$ is $(H_4(G),\beta,\ell)$-closed which is a contradiction, so $\CP$ is non-trivial.
    Then, by Lemma~\ref{lem:free2extremal}, $G$ is $\gamma$-extremal.
\end{proof}

\subsection{Extremal case}\label{subsec:ext}

For a partition $\CP =\{ V_1,V_2,V_3 \}$ of $V(G)$, we may abuse the notation by setting $V_0=V_3$ and $V_4=V_1$. We may refer to a copy of $D$ as a copy of the \emph{$i$-th type} for some $i\in[3]$ if such copy has two vertices in $V_i$ and one vertex in each of $V_{i-1},V_{i+1}$. 

The proof of Lemma~\ref{lem:extremal} will rely on the following result, which allows us to obtain a perfect tiling after cleaning up the extremal partition. 

\begin{theorem}[Theorem 1.2 in~\cite{martin-zhao}]\label{thm:blackbox}
    For every positive real number $\gamma$ and positive integer $h$ there exists $n_0 \in \N$ such that if $n\ge n_0$ and $n$ is divisible by $h$, then for every graph $G$ with a vertex partition $\{V_1,V_2,V_3\}$ such that $|V_1|=|V_2|=|V_3|=n$ and for every $i\in [3]$ and $v\in V_i$, $|N(v)\cap V_{i+1}|,|N(v)\cap V_{i-1}|\ge (2/3+\gamma) n$, $G$ contains a perfect $K_{h,h,h}$-tiling.
\end{theorem}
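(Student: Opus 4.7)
The plan is to combine the absorbing method with the tripartite regularity and blow-up lemmas, reducing the $H$-tiling problem to a tripartite $K_3$-factor problem in the reduced graph. I would first build an absorbing set $U \subseteq V(G)$ of small linear size with $|U \cap V_1| = |U \cap V_2| = |U \cap V_3|$, such that for every small leftover $W$ with $|W \cap V_i|$ equal across $i$ (and the total divisible by $h$), the graph $G[U \cup W]$ admits a perfect $H$-tiling. The standard random construction (in the spirit of Lemma~\ref{lem:absorbing}) produces such a $U$ provided every balanced $h$-element target $T$ has $\Omega(n^C)$ absorbing gadgets of constant size $C$, and the minimum-degree hypothesis $|N(v) \cap V_{i\pm 1}| \ge (2/3 + \gamma)n$ is precisely what allows the greedy construction of such gadgets: any balanced $T$ can be enveloped by a constant-size piece admitting two perfect $H$-tilings that differ exactly by swapping in $T$.

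Next, apply a tripartite regularity lemma to $G' = G - U$ to obtain a reduced tripartite graph $R$ on three parts of size $\ell$, still satisfying reduced minimum-degree at least $(2/3 + \gamma/2)\ell$ between parts. The tripartite Hajnal--Szemer\'edi theorem (Fischer's conjecture, proven in various forms by Magyar--Martin, Martin--Szemer\'edi, and Keevash--Mycroft) produces a perfect $K_3$-factor in $R$, which lifts, after minor cleaning that moves non-super-regular vertices into the leftover, to a collection of super-regular tripartite triples $(C_1, C_2, C_3)$ in $G'$ with equal-size clusters of size $t$.

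Within each such super-regular triple, fix a $3$-coloring of $H$ with color classes of sizes $a,b,c$ summing to $h$. For each permutation $\pi \in S_3$, the blow-up lemma produces arbitrarily many vertex-disjoint copies of $H$ in which the color classes of sizes $a, b, c$ embed into $C_{\pi(1)}, C_{\pi(2)}, C_{\pi(3)}$ respectively. Choose nonnegative integers $x_\pi$ so that the total usage of each cluster equals $t$: this is a $3 \times 6$ linear system that admits the positive rational solution $x_\pi = t/(2h)$ for every $\pi$ by symmetry, since as $\pi$ ranges over $S_3$ the value $\pi^{-1}(i)$ hits each coordinate twice, giving $\sum_{\pi \in S_3} |A_{\pi^{-1}(i)}| = 2h$ for every $i$. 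An integer solution with $O(1)$ slack follows by rounding, and the slack vertices are pushed to the leftover.

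Concatenating the per-triple tilings produces an almost perfect $H$-tiling whose uncovered set $W$ is small and balanced across $V_1, V_2, V_3$; by design of $U$, the set $G[U \cup W]$ admits a perfect $H$-tiling, completing the perfect $H$-tiling of $G$. The principal obstacle is the color-class balancing in the third step: a single super-regular triple admits a perfect $H$-tiling only because all six permutation-types of embedding are simultaneously available, and the rounding of the linear-system solution must be reconciled globally with the cluster sizes $t$ selected in the regularity step and with the divisibility hypothesis $h \mid n$. Carrying out this global accounting, while ensuring the resulting leftover remains balanced and absorbable by $U$, is the technical heart of the argument.
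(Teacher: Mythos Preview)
This statement is not proved in the paper: it is quoted verbatim as Corollary~1.3 of Martin--Zhao~\cite{martin-zhao} and invoked purely as a black box in the proof of Corollary~\ref{cor:0mod4}. There is therefore no proof in the paper to compare your proposal against.

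For what it is worth, your outline is a plausible route to the $(2/3+\gamma)n$ threshold, though a couple of points deserve care. First, invoking the tripartite Hajnal--Szemer\'edi theorem for the $K_3$-factor in the reduced graph is a heavy hammer: that result is essentially the $H=K_3$ case of the very statement you are proving, so while not formally circular (you only need it on the bounded-size reduced graph), it is worth noting that a much softer argument suffices at reduced minimum degree $(2/3+\gamma/2)\ell$. Second, in your absorbing step you assert that every balanced $h$-set $T$ has many absorbing gadgets; to actually build such gadgets you must exhibit, for an arbitrary balanced $T$, a constant-size configuration admitting two perfect $H$-tilings that toggle exactly $T$, and this construction already requires the permutation-of-colour-classes idea from your third step together with a linking argument between arbitrary vertices of the same part. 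This is doable under the hypothesis but is not automatic from the generic absorbing lemma you cite. Finally, the global accounting you flag as the ``technical heart'' is genuinely delicate: the rounding slack from each super-regular triple must be balanced across the three parts \emph{and} kept divisible by $h$ in total, which typically forces one to first remove a small number of carefully chosen copies of $H$ (of specified permutation types) before handing the remainder to the blow-up lemma.
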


\begin{corollary}\label{cor:0mod4}
    There exist $c', \xi >0$ and $n_0\in \N$ so that the following holds. Suppose $n\ge n_0$ is a multiple of $4$, and $G$ is a near-tournament on $n$ vertices such that $\delta^0(G)\ge (1/2-c')n$. If there is a partition $\{V_1,V_2,V_3\}$ of $V(G)$ such that 
    \begin{equation} \label{eq:div}
        |V_1|\equiv |V_2|\equiv |V_3|\equiv 0\pmod 4
    \end{equation}
    and, for every $i\in [3]$ and $v\in V_i$, 
    \begin{equation} \label{eq:v}
        d^+(v,V_{i+1}), \; d^-(v,V_{i-1})\ge (1/3-\xi)n ,
    \end{equation}
    then $G$ has a perfect $D$-tiling.
\end{corollary}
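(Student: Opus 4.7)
The plan is to reduce to an application of Theorem~\ref{thm:blackbox} on an auxiliary undirected tripartite graph built from the ``forward'' (cyclic) edges of $G$, taking $H$ to be the complete tripartite graph $K_{1,1,2}$ on $4$ vertices, which is $3$-colorable with color classes of sizes $1,1,2$. Call a copy of $D = D_{1,1,2}$ in $G$ a \emph{type-$i$} copy if it has two vertices in $V_i$ and one vertex in each of $V_{i-1}$ and $V_{i+1}$. I will first balance the three parts by removing a small number of type-$i$ copies of $D$; then apply Theorem~\ref{thm:blackbox} to the residual tripartite graph; finally lift the resulting $K_{1,1,2}$-tiling to a $D$-tiling via the tournament edge within each doubled part.

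For the balancing, let $b_i$ be the number of type-$i$ copies to be removed and $B = b_1+b_2+b_3$. Removing a type-$i$ copy costs $2$ vertices of $V_i$ and $1$ of each of $V_{i-1}, V_{i+1}$, so the residual sizes satisfy $|V_i'| = |V_i| - b_i - B$. I choose $n'$ to be the largest multiple of $4$ with $n' \le 4\min_i|V_i| - n$ and then set $b_i = |V_i| - (n+n')/4$. Using~\eqref{eq:div}, each $b_i$ and $B = (n-3n')/4$ are non-negative integers, and $|V_i'| = n'$ is a common multiple of $4$. The $\gamma$-extremality yields $b_i \le 2\gamma n + O(1)$, so $B = O(\gamma n)$. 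To extract the copies greedily, at any intermediate stage at most $4B = O(\gamma n)$ vertices are used; for any still-unused $x \in V_{i+1}$ and $y \in N^+(x, V_{i-1})$, \eqref{eq:v} combined with $|V_i| \le (1/3+\gamma)n$ gives that the set of unused vertices in $N^-(x, V_i) \cap N^+(y, V_i)$ has size at least $(1/3 - 2\xi - O(\gamma))n \ge 2$ once $\xi, \gamma$ are small. Any two distinct vertices $u_1, u_2$ in this set form a type-$i$ copy of $D$ on $\{u_1, u_2, x, y\}$, the internal edge on the doubled part being automatic from the tournament structure. Iterating yields the required vertex-disjoint family $\D_0$.

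Set $V_i' = V_i \setminus V(\D_0)$ and define $G^*$ to be the undirected tripartite graph on $V_1' \cup V_2' \cup V_3'$ whose edge set is $\{\{u,v\} : u\in V_i',\,v\in V_{i+1}',\,uv \in E(G) \text{ for some } i\in [3]\}$. For every $v \in V_i'$,
\[
    |N_{G^*}(v) \cap V_{i+1}'| \ge d^+(v, V_{i+1}) - (b_{i+1} + B) \ge \left(1/3 - \xi - O(\gamma)\right) n,
\]
and the analogous bound for $V_{i-1}'$; using $n' \le n/3$, this is at least $(2/3 + \gamma_0) n'$ for some $\gamma_0 > 0$ once $\xi, \gamma$ are sufficiently small. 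Theorem~\ref{thm:blackbox} applied to $G^*$ with $H = K_{1,1,2}$ (so $h = 4$ divides $n'$) then produces a perfect $K_{1,1,2}$-tiling $\mathcal{K}$ of $G^*$.

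Each copy in $\mathcal{K}$ consists, for some $i$, of vertices $a \in V_i'$, $b \in V_{i+1}'$, $c, c' \in V_{i+2}'$ with all five cross-part edges of $K_{1,1,2}$ present in $G^*$; by construction this means $a \to b$, $b \to c$, $b \to c'$, $c \to a$, and $c' \to a$ in $G$. The tournament edge between $c$ and $c'$, in either orientation, then completes $G[\{a,b,c,c'\}]$ as a copy of $D = D_{1,1,2}$. Combined with $\D_0$ this produces the sought perfect $D$-tiling of $G$. The main technical point to verify carefully is the balancing step and the compatibility of the residual degree conditions with Theorem~\ref{thm:blackbox}: the constants $c', \xi, \gamma_0$ must be chosen so that, after removing $O(\gamma n)$ vertices, the residual cross-part degrees in $G^*$ still clear the $(2/3+\gamma_0)$-threshold relative to $n'$.
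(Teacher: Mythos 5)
Your argument is correct and follows the same high-level strategy as the paper's proof: remove a small family of copies of $D$ to equalize the three parts at a common size divisible by $4$, then apply Theorem~\ref{thm:blackbox} with $H=K_{1,1,2}$ to the auxiliary tripartite graph of forward edges, and finally lift each $K_{1,1,2}$ back to a copy of $D$ via the tournament edge inside the doubled class. The balancing step is, however, implemented differently. The paper removes $(|V_i|-|V_1|)/4$ copies of $D$ lying \emph{entirely inside} $V_i$ for $i\in\{2,3\}$ (after observing $\delta^0(G[V_i])\ge(1/2-O(\xi+c'))|V_i|$ and invoking Lemma~\ref{lem:minimum_degree}), which leaves all three parts of size $|V_1|$; you instead remove ``type-$i$'' copies that straddle all three parts and solve a three-variable linear system for the counts $b_1,b_2,b_3$, greedily extracting each copy using only the cross-part degree conditions \eqref{eq:v}. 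Your route avoids any appeal to the minimum semi-degree inside a single $V_i$, at the modest cost of removing roughly four times as many copies and a bit more arithmetic; both are $O(\xi n)$ and both land in the regime where Theorem~\ref{thm:blackbox} applies. One cosmetic point: you repeatedly invoke a parameter $\gamma$ (as in ``$\gamma$-extremality''), but Corollary~\ref{cor:0mod4} does not assume $\gamma$-extremality; the bound $|V_i|\le(1/3+2\xi)n$ that you actually need follows directly from \eqref{eq:v} applied to the two other parts, exactly as the paper records, so this is just a naming slip rather than a gap.
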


\begin{proof}
    In what follows, we assume $c', \xi >0$ are sufficiently small and $n \in \N$ is sufficiently large so that all strict inequalities are true.  
    Without loss of generality, assume that $|V_1|\le |V_2|\le |V_3|$.
    For $i\in[3]$, the degree conditions guarantee that $(1/3 - \xi)n \le |V_i|\le (1/3 + 2\xi)n$ and 
    $$\delta^0(G[V_i])\ge 
    \left( \frac{1}{2} -c' \right)n - \left( \frac{1}{3} + 2\xi \right) n 
    \ge \left( \frac{1}{2} - 3c' - 9\xi \right) \left( \frac{1}{3} +2 \xi \right) n
    \ge \left( \frac{1}{2} - 3c' - 9\xi \right) |V_i|.$$ 
    We then set $c \coloneqq 3c'+9\xi$ and note that, for $i\in \{2,3\}$, 
    \begin{equation*}
        \left( \frac{1}{32}-4c \right)|V_{i}| \ge 
        \left( \frac{1}{32}-4c \right) \left( \frac{1}{3}-\xi \right)n > 
        3\xi n \ge |V_{i}|-|V_{1}|.
    \end{equation*}
    
    For $i\in \{2,3\}$, by Lemma~\ref{lem:minimum_degree} applied to $G[V_i]$, we can greedily find a collection $\D_i$ of $|\D_i| = (|V_i|-|V_{1}|)/4 \le \xi n$ vertex-disjoint copies of $D$ in $G[V_i]$.    

    We set $V_1' \coloneqq V_1$, $V_2' \coloneqq V_2 \setminus V(\D_2)$ and $V_3' \coloneqq V_3 \setminus V(\D_3)$ so that $|V_1'|=|V_2'|=|V_3'|$.  
    Let $G'$ be the (simple, non-oriented) $3$-partite graph on the vertex set $V(G') = V_1'\cup V_2' \cup V_3'$ and the edge set $E(G')$ corresponding to the underlying edges from $\bigcup_{i=1}^3 E^+_G(V_i', V_{i+1}')$. For every $i \in [3]$ and $v \in V_i'$, we have that 
    \begin{equation*}
        |N_{G'}(v) \cap V_{i+1}'| \ge d^+_G(v, V_{i+1}) - \xi n \ge 
        \left( \frac{1}{3}-2\xi \right) n > 
        \left( \frac{2}{3} + \xi \right) \left(\frac{1}{3}+2\xi\right)n \ge \left( \frac{2}{3} + \xi \right) |V_i'| ,
    \end{equation*}
    and, similarly, $|N_{G'}(v) \cap V_{i-1}'| \ge \left( \frac{2}{3} + \xi \right) |V_i'|$.
    By Theorem~\ref{thm:blackbox} applied to the graph $G'$ with $h=4$, and by Lemma~\ref{lem:k444}, there exists a perfect $D$-tiling $\D$ of the oriented graph induced by $V(G)\setminus V(\D_2\cup \D_3)$, and $\D\cup \D_2\cup \D_3$ is a perfect $D$-tiling of $G$.
\end{proof}

While Lemma~\ref{lem:minimum_degree} allows us to greedily find copies of $D$ covering the few vertices $v$ that do not satisfy~\eqref{eq:v}, we will also need the following lemma, which allows us to find copies of $D$ while guaranteeing the divisibility condition~\eqref{eq:div}. 

\begin{lemma}\label{lem:ith}
    Let $\alpha, \beta, c>0$ be such that $\alpha+2\beta+2c < 1/3$. Let $G$ be an oriented graph with $\delta^0(G)\ge (1/2-c)n$. Suppose that there are disjoint $V_1,V_2,V_3\subseteq V(G)$ satisfying that for every $i\in [3]$, we have $|V_i|\ge (1/3-\alpha)n$ and $V_i\subseteq N^-_\beta(V_{i+1})$ and $V_i\subseteq N^+_\beta(V_{i-1})$. Then, for every $u\in V_1\cup V_2\cup V_3$ and $i \in [3]$, there is a copy of $D$ in $G$ of $i$-th type that contains the vertex $u$.
\end{lemma}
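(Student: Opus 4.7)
The plan is to split into three cases based on the location of $u$, and in each case to greedily construct a copy of $D$ of $i$-th type vertex-by-vertex using the $\beta$-reachability hypothesis. Write the vertex set of $D=D_{1,1,2}$ as $\{a_1,a_2,b,c\}$ where $\{a_1,a_2\}$ is the part of size two (with transitive edge $a_1\to a_2$) and $c\to a_j$, $a_j\to b$ for $j\in\{1,2\}$, $b\to c$; an $i$-th type copy places $a_1,a_2\in V_i$, $b\in V_{i+1}$, and $c\in V_{i-1}$. Cycling the hypothesis over all three indices, for every $v\in V_j$ we have $d^+(v,V_{j+1})\ge|V_{j+1}|-\beta n$ and $d^-(v,V_{j-1})\ge|V_{j-1}|-\beta n$, so between any two consecutive parts every vertex participates in the ``forward'' bipartite edges save for at most $\beta n$ exceptions.

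For $u\in V_i$, I will place $u$ as $a_1$. First pick $c\in V_{i-1}$ with $c\to u$; there are at least $|V_{i-1}|-\beta n$ choices. Next pick $b\in V_{i+1}$ with $u\to b$ and $b\to c$; since $u$ sends to all but $\beta n$ of $V_{i+1}$ and $c$ receives from all but $\beta n$ of $V_{i+1}$, at least $|V_{i+1}|-2\beta n$ valid $b$ remain. Finally, set $A\coloneqq\{a\in V_i\setminus\{u\}: c\to a \text{ and } a\to b\}$, which satisfies $|A|\ge|V_i|-2\beta n-1$ by the same bipartite estimates applied to $c$ and $b$. To complete the copy I need to pick $a\in A$ adjacent to $u$ (in either orientation). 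Since $\delta^0(G)\ge(1/2-c)n$ implies $d(u)\ge(1-2c)n$, there are at most $2cn$ vertices non-adjacent to $u$, so at least
\[
|V_i|-2\beta n-2cn-1 \ge (1/3-\alpha-2\beta-2c)n-1
\]
valid choices for $a$ remain, which is positive for large $n$ by the hypothesis $\alpha+2\beta+2c<1/3$. Then $\{u,a,b,c\}$ induces the desired copy of $D$.

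The cases $u\in V_{i+1}$ (where $u$ plays the role of $b$) and $u\in V_{i-1}$ (where $u$ plays the role of $c$) are handled by the same three-step procedure: first pick the opposite-part singleton using the $\beta$-reachability of $u$ to that part, then pick a first vertex $a_1\in V_i$ satisfying the two relevant ``forward'' bipartite constraints, and finally pick a second vertex $a_2\in V_i$ satisfying those same two constraints and additionally being adjacent to $a_1$, controlled this time by the non-neighbor count of $a_1$ in $G$. In every case the arithmetic that closes the count is $(1/3-\alpha-2\beta-2c)n-1>0$. The main (minor) obstacle is precisely this final edge-inside-$V_i$ requirement: the hypothesis is too weak to directly lower bound $d^\pm(u,V_i)$ (or $d^\pm(a_1,V_i)$), so the edge requirement must be routed through the global non-neighbor count afforded by the minimum semi-degree of $G$.
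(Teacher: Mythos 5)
Your proposal is correct and is essentially the same greedy argument as the paper's: extend from $u$ vertex by vertex, using the $\beta$-reachability hypothesis to bound the ``forward'' degrees into each part, and route the one edge required inside $V_i$ through the global adjacency count forced by $\delta^0(G)\ge(1/2-c)n$ (at most $2cn$ non-neighbors). The only cosmetic difference is organization: the paper fixes $u\in V_1$, builds a copy of the third type by first choosing $v\in V_2\cap N^+(u)$ and then two vertices $z,w\in V_3\cap N^-(u)\cap N^+(v)$ with an edge between them, and then \emph{reuses} the same $v,w$ to produce copies of the first and second types by adding $x\in V_1\cap N^+(w)\cap N^-(v)\cap N(u)$ or $y\in V_2\cap N^+(u)\cap N^-(w)\cap N(v)$; whereas you split on the part containing $u$ and run three independent constructions. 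Both are valid and close the counting with the same inequality $\alpha+2\beta+2c<1/3$.
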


\begin{proof}
    Without loss of generality, it suffices to show that an arbitrary vertex $u\in V_1$ is contained in copies of $D$ of any of the three types.
    By assumption, we have $d^+(u,V_{2})\ge |V_{2}|-\beta n$ and $d^-(u,V_{3})\ge |V_{3}|-\beta n$.
    We may pick some vertex $v\in V_2$ with $v\in N^+(u)$.
    Since
    $$|V_3\cap N^-(u)\cap N^+(v)|\ge d^-(u, V_3)+d^+(v, V_3)-|V_3|\ge |V_3|-2\beta n \ge \left( \frac{1}{3}-\alpha - 2\beta \right) n >2cn,$$
    there are two distinct vertex $z,w\in V_3\cap N^-(u)\cap N^+(v)$ that either $zw$ or $wz$ is an edge in $G$.
    Then $G[\{u,v,w,z\}]$ induces a copy of $D$ of the third type containing $u$.
    Similarly, we have that
    $$|V_1\cap N^+(w)\cap N^-(v)|\ge |V_1|-2\beta n
    \quad \text{ and } \quad
    |V_2\cap N^+(u)\cap N^-(w)|\ge |V_2|-2\beta n.$$
    For $x\in V_1\cap N^+(w)\cap N^-(v)\cap N(u)$ and $y\in V_2\cap N^+(u)\cap N^-(w)\cap N(v)$,
    we conclude that $G[\{u,v,w,x\}]$ contains a copy of $D$ of the first type containing $u$ and $G[\{u,v,w,y\}]$ contains a copy of $D$ of the second type containing $u$.
\end{proof}

We are now ready to prove Lemma~\ref{lem:extremal}.

\begin{proof}[Proof of Lemma~\ref{lem:extremal}]
    In what follows, we assume that $c, \gamma >0$ are sufficiently small so that there exists $\beta > \max\{ 4c, \sqrt{60\gamma} \}$ satisfying   
    \begin{equation}\label{eq:beta<c}
        \beta < \frac{1}{32}-4c
    \end{equation}
    \begin{equation}\label{eq:bgc<1/3}
        9\beta + \gamma + 2c < \frac{1}{3}
    \end{equation}
    and so that Corollary~\ref{cor:0mod4} holds with parameters $c' > c+3\beta$ and $\xi > 2\beta +\gamma $. 
    
    We fix a partition $\{V_1,V_2,V_3\}$ of $V(G)$ such that, for every $i\in[3]$, $(1/3 - \gamma)n \le |V_i| \le (1/3 + \gamma)n$ and $e^-(V_i,V_{i+1})\le \gamma n^2$.
    Let $U_i=N^-_\beta(V_{i+1})\cap N^+_\beta(V_{i-1}) \cap V_i$ for $i\in [3]$, and $U_0=V\setminus(U_1\cup U_2\cup U_3)$.
    For every $i\in[3]$ and $v\in V_i\setminus U_i$, by definition of $U_i$, at least one of the following holds:
    \begin{align*}
        d^-(v,V_{i+1})\ge |V_{i+1}|-(|V_{i+1}|-\beta n)-(n-2\delta^0(G))=\beta n-2cn > \beta n/2,\\
        d^+(v,V_{i-1})\ge |V_{i-1}|-(|V_{i-1}|-\beta n)-(n-2\delta^0(G))=\beta n-2cn > \beta n/2.
    \end{align*}
    Thus we have that
    \begin{equation}
        |U_0|=\sum_{i=1}^3|V_i\setminus U_i|\le \frac{\sum_{i=1}^3 2e^-(V_i,V_{i+1})}{\beta n/2}\le \frac{12\gamma n^2}{\beta n} < \frac{\beta n}{5}.\label{equ:u0}
    \end{equation}

    By~\eqref{eq:beta<c},~\eqref{equ:u0} and Lemma~\ref{lem:minimum_degree}, we can greedily find a collection $\C_0$ of vertex-disjoint copies of $D$ that covers all of $U_0$.
    For $i\in[3]$, let $W_i=U_i\setminus V(\C_0)$ and $W=W_1\cup W_2\cup W_3$.
    By~\eqref{equ:u0}, since $|V(\C_0)|\le 4|U_0|$, we have that $|V_i\setminus W_i|\le |V(\C_0)| + |U_0| < \beta n$. This implies that
    \begin{equation} \label{eq:|Wi|}
        |W_i| > |V_i| - \beta n \ge \left(\frac{1}{3} - \gamma - \beta \right) n, \text{ and}
    \end{equation}
    \begin{equation} \label{eq:deg_Wi}
        \delta^0(G[W]) > \left( \frac{1}{2} -c \right)n - 3\beta n \ge 
        \left( \frac{1}{2} -c - 3\beta \right) |W|.
    \end{equation}

    For $v \in W_i$, we have that $v \in N_\beta^-(V_{i+1})$, which implies $d^+(v, V_{i+1}) \ge |V_{i+1}|-\beta n$ and then $d^+(v, W_{i+1}) \ge |W_{i+1}|-\beta n$. Similarly, $d^-(v, W_{i-1}) \ge |W_{i-1}|-\beta n$. That is, 
    \begin{equation} \label{eq:Wi_subset}
        W_i \subseteq N_\beta^-(W_{i+1}) \cap N_\beta^+(W_{i-1}).
    \end{equation}
    
    Let the triple $(a_1,a_2,a_3)\in \{0,1,2,3\}^3$ be such that $|W_i| \equiv a_i \pmod 4$ for every $i \in [3]$. Note that $a_1+a_2+a_3 \equiv n \equiv 0 \pmod 4$. For every $i \in [3]$, set $b_i \equiv a_i - \frac{a_1+a_2+a_3}{4} \pmod 4$ so that $b_i \in \{0,1,2,3\}$.
    
    By~\eqref{eq:bgc<1/3},~\eqref{eq:|Wi|},~\eqref{eq:deg_Wi},~\eqref{eq:Wi_subset}, and Lemma~\ref{lem:ith} applied multiple times, there exists a collection $\C_1$ of disjoint copies of $D$ such that $\C_1$ contains exactly $b_i$ many copies of $D$ of the $i$-th type for every $i\in [3]$.
    It follows
    \begin{itemize}
        \item For $W_i'=W_i\setminus V(\C_1)$, we have $|W_i'| \equiv a_i - 2b_i - b_{i+1} -b_{i+2}\equiv 0\pmod 4$,
        \item $|V(\C_1)| = 4 (b_1+b_2+b_3) \le 36$.
    \end{itemize}

    Let $W'=W_1'\cup W_2'\cup W_3'$. We note that, for every $i\in [3]$ and $v \in W_i'$,
    \begin{equation}
        d^+(v, W_{i+1}') \ge d^+(v, W_{i+1}) - 12 \ge   
        \left( \frac{1}{3} - \gamma - 2\beta \right) n \ge
        \left( \frac{1}{3} - \xi \right) |W'|,
    \end{equation}
    and, similarly, $d^-(v, W_{i-1}')\ge \left( \frac{1}{3} - \xi \right) |W'|$.
    Then, by Corollary~\ref{cor:0mod4}, there is some perfect $D$-tiling $\C_2$ of $G[W']$, and $\C_0 \cup\C_1\cup \C_2$ is a perfect $D$-tiling of $G$.
\end{proof}

\section{Concluding remarks} \label{sec:conclusion}

\subsection{Tur\'anability}

In this paper, we show that there exists an oriented graph which is Tur\'anable and it is not a subgraph of $D_{s}$ for any $s \ge 2$, disproving Conjecture~\ref{old_conj}. However, the example we gave is a subgraph of the regular tournaments $F_r$ (for $r=2$) and $C_{2k+1}^k$ (for $k=2$), which suggests the following conjecture.

\setcounter{theorem}{7} \setcounter{section}{1}
\begin{conjecture}
    An oriented graph $H$ is Tur\'anable if and only if there are integers $r, k$ such that $H \subset F_r$ and $H \subset C_{2k+1}^k$.
\end{conjecture}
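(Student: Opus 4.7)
The necessity direction is immediate from the fact that $F_r$ and $C_{2k+1}^k$ are themselves regular tournaments: if $H \not\subset F_r$ for every $r$, then the sequence $\{F_r\}_{r \ge 1}$ gives arbitrarily large regular tournaments avoiding $H$, and analogously for $\{C_{2k+1}^k\}$. So any Tur\'anable $H$ must embed into some $F_r$ and some $C_{2k+1}^k$.

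The sufficiency direction is the crux, and my plan is to adapt the Bollob\'as--H\"aggkvist strategy for Theorem~\ref{thm:bh}. Their argument for tournaments reduced matters to embedding into $D_s$, but Theorem~\ref{thm:conj_is_false} shows this route is unavailable in general: although $S \subset F_2 \cap C_5^2$, no $D_s$ contains $S$. Notably, the ad-hoc proof of Claim~\ref{claim:S_turanable} proceeds by first locating an anchor copy of $D_{1,1,2}$ and then extending it using a degree/complement-counting argument driven by regularity of the host. In spirit, the general proof should also begin with an anchor structure guaranteed by containment in one of the two canonical templates and then extend by exploiting the structural constraint coming from the other, treating $F_r$ and $C_{2k+1}^k$ as independent rather than intersected via $D_s$.

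Concretely, I would proceed in three steps. First, apply the directed regularity lemma (Lemma~\ref{lem:di-regularity}) and Lemma~\ref{lem:reduced_graph} to pass from a large regular tournament $G$ to a reduced oriented graph $\wt R$ of bounded size with near-maximal minimum semi-degree. Second, establish a structural dichotomy for near-regular tournaments: every sufficiently large such tournament contains, as an approximate regular sub-tournament, either a large copy of $F_R$ with $R \ge r$ or a large copy of $C_{2K+1}^K$ with $K \ge k$. Third, use the blow-up lemma (Lemma~\ref{lem:blowup}) to lift the given embedding $H \subset F_r$ or $H \subset C_{2k+1}^k$ back to $G$ through the blown-up template found in $\wt R$.

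The main obstacle is the second step: a Ramsey-type structural dichotomy for regular tournaments. It is not even clear that the dichotomy holds in the clean form stated above; likely a third ``extremal'' case must be allowed and then handled by a stability argument analogous to Lemma~\ref{lem:notclosedtoextremal}. A promising approach is to iterate the recursive definition $F_{r+1}$: search in the reduced tournament for an approximate tripartition with consistent cyclic orientation between parts, recurse into each part, and halt either when the recursion yields $F_R$ with $R \ge r$ or when some part fails to admit such a tripartition, in which case one argues that the tournament must be close to a power-of-a-cycle structure and hence contains $C_{2K+1}^K$ for large $K$. Controlling the parameters through the recursion, and combining the $F_r$-side and $C_{2k+1}^k$-side obstructions --- the tournament analogue of the canonical edge-orderings in~\cite{APTX, GMNPTV} --- forms the technical heart and likely requires tools beyond what is currently available in the oriented-graph literature.
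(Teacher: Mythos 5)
This statement is presented in the paper as Conjecture~\ref{new_conj}; the paper offers no proof of it and indeed states it as an open problem, motivated by the counterexample $S$ to Conjecture~\ref{old_conj}. There is therefore no ``paper's own proof'' to compare against, and your submission should be read as a proposed proof strategy for an open conjecture.

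Within that framing: the necessity direction you give is correct and is exactly the standard observation (since $F_r$ and $C_{2k+1}^k$ are regular tournaments of unbounded order, a Tur\'anable $H$ must embed in each family eventually). But the sufficiency direction, which is the entire content of the conjecture, is not established by your sketch. The crucial step~(2) --- that every large near-regular tournament contains a large blown-up $F_R$ or a large blown-up $C_{2K+1}^K$ --- is asserted without proof, and you yourself concede both that it ``is not even clear that the dichotomy holds in the clean form stated'' and that the recursion ``likely requires tools beyond what is currently available.'' In other words, the sketch relocates the difficulty of the conjecture into an unproved Ramsey-type dichotomy lemma rather than resolving it. There are also concrete technical obstacles you do not address: the reduced oriented graph produced by Lemmas~\ref{lem:di-regularity} and~\ref{lem:reduced_graph} is only near-regular and need not be a tournament, so ``recursing into a part'' does not obviously preserve the hypotheses; and even granting a blown-up template in the reduced graph, the blow-up lemma gives you a blow-up of $F_r$ or $C_{2k+1}^k$, whereas $H$ is assumed to embed into $F_r$ and $C_{2k+1}^k$ themselves, not into a blow-up of a single one, so one must argue that $H \subset F_r$ and $H \subset C_{2k+1}^k$ together force $H$ to embed into whichever blown-up template the dichotomy happens to produce --- and that implication is not justified. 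As it stands, the proposal is a plausible research program, not a proof, and it leaves the conjecture open.
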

\setcounter{section}{6} \setcounter{theorem}{0}

In the context of edge-ordered graphs (see, e.g.,~\cite{APTX, GMNPTV}), there are four \emph{canonical edge-orders} so that an edge-ordered graphs is Tur\'anable if and only if it is a subgraph of all four canonical edge-orders. In this notation, Conjecture~\ref{new_conj} asserts that there is a finite list of canonical regular tournaments, and this list consists of the two regular tournaments $F_r$ and $C_{2k+1}^k$. We note that a counterexample to Conjecture~\ref{new_conj} would provide another semi-regular near-tournament to be included in this list. Even if Conjecture~\ref{new_conj} turns out to be false, we strongly believe that there should only be finitely many elements in a list of \emph{canonical semi-regular near-tournaments}.  

\subsection{Tileability}

We also show that the tournament $D_{1,1,2}$ is tileable, while $D_{s,s,s}$ is not tileable for any $s\ge 2$. It would be interesting to obtain a characterization of \emph{tournaments} that are Tur\'anable and not tileable. 
Since we know from Theorem~\ref{thm:bh} that a tournament is Tur\'anable if and only if it is subgraphs of $D_s$ for some $s\ge 1$, an answer to the following question would provide such characterization.

\begin{question} 
    For which $a\le b \le c$ is the oriented graph $D_{a,b,c}$ tileable?
\end{question}

We believe that $D_{a,b,c}$ will be tileable unless there is a divisibility barrier that does not break after changing the orientation of edges in a subgraph of bounded degree akin to the construction in Section~\ref{sec:ds}, where the bound on the degrees will depend on $a$, $b$, and $c$.

\subsection{Semi-degree thresholds}
In~\cite{DHLMPT}, it is shown that $C_{k}^\ell$ is Tur\'anable if and only if $k \ge 3\ell$. 
The authors of~\cite{DHLMPT} then ask what is $\kappa^0(C_{3t}^t)$.
In this paper, we show that $\kappa^0(C_{6}^2) \ge 3/7$.
It follows from Claim~\ref{claim:blowup} that it is enough to give a single example of an oriented graph without copy of $C_{3t}^t$ to obtain a lower bound on $\kappa^0(C_{3t}^t)$. 
Therefore, $\kappa^0(C_{3t}^t) \ge \lfloor \frac{3t-2}{2} \rfloor \cdot \frac{1}{3t-1}$ follows from the fact that a semi-regular near-tournament on $3t-1$ vertices does not contain an oriented graph on $3t$ vertices as a subgraph.
We ask the following questions.

\begin{question}
    Is it true that $\kappa^0(C_{3t}^t) > \lfloor \frac{3t-2}{2} \rfloor \cdot \frac{1}{3t-1}$ for every $t \ge 2$?
\end{question}

\begin{question}
    Is it true that $\kappa^0(C_{6}^2) > 3/7$?
\end{question}

\section*{Acknowledgments}

The authors thank Louis DeBiasio and the anonymous referees for their helpful comments and corrections on earlier versions of the paper.

\appendix
\section{Proof of Lemma~\ref{lem:merge}} \label{appendix}

Let $\beta>0$ be sufficiently small so that there exists $\xi>0$ satisfying 
$$ \sqrt{2 \beta} < \xi < \min \left\{ \frac{1}{(20\ell+3)!},  6 (\beta')^{8\ell+3} , \mu^2 (\beta')^{5} \right\} .$$

Then two vertices $u_0$ and $v_0$ are $(H_4,\beta,5\ell+1)$-reachable if there exist at least $ \xi n^{20\ell+3}$ ordered $(20\ell+3)$-tuples that can be permuted to $(u_1, u_2, \dots, u_{20\ell+3})$ and $(v_1, v_2, \dots, v_{20\ell+3})$ so that $\{u_{4j},u_{4j+1},u_{4j+2},u_{4j+3}\}$ and $\{v_{4j},v_{4j+1},v_{4j+2}, v_{4j+3}\}$ induce copies of $D$ for every $0\le j \le 5\ell$.

Indeed, this follows from $\beta< \xi^2/2$, the fact that there are at most $(20\ell+3)!<1/\xi$ orderings for each such tuple, and the number of tuples containing $u_0$, $v_0$ or repeated vertices is at most 
$$ 2(20\ell+3)n^{20\ell+2} + n \cdot (20\ell+3)^2 \cdot n^{20\ell+1} < \xi n^{20\ell+3}/2 ,$$
where the last inequality holds for sufficiently large $n\ge n_0$.

We will show that a pair of vertices $u, v$ is $(H_4,\beta,5\ell+1)$-reachable in both cases when $u,v$ belong to the same part of $\CP$ and when $u\in V_i$, $v \in V_j$ for $i,j \in [d]$ so that $\mathbf{u}_i - \mathbf{u}_j \in L_\CP^{\mu_4}(H_4)$.
    
\begin{claim}
    If $u,v$ are $(H_4,\beta',\ell)$-reachable, then they are $(H_4,\beta,5\ell+1)$-reachable.
\end{claim}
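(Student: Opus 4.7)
The plan is to construct many $(H_4,u,v)$-linking sets of size $20\ell+3$ by extending each $(H_4,u,v)$-linking set of size $4\ell-1$ by $4\ell+1$ additional vertex-disjoint copies of $D$ drawn from the rest of the graph. By the reformulation given at the opening of the proof, it suffices to exhibit at least $\xi n^{20\ell+3}$ ordered $(20\ell+3)$-tuples that admit the required permutations witnessing reachability for both $u$ and $v$.

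My first step is to use the hypothesis that $u,v$ are $(H_4,\beta',\ell)$-reachable to pick one of at least $\beta' n^{4\ell-1}$ ordered $(4\ell-1)$-tuples $(w_1,\dots,w_{4\ell-1})$ witnessing $\ell$-reachability. For each such tuple, I will append an ordered $(16\ell+4)$-tuple $(y_1,\dots,y_{16\ell+4})$ of distinct vertices in $V(G)\setminus\{u,v,w_1,\dots,w_{4\ell-1}\}$ such that each consecutive block of four induces a copy of $D$. The concatenated $(20\ell+3)$-tuple then has the right structure for both $u$ and $v$: the first $4\ell-1$ coordinates inherit the two permutations from the $\ell$-reachability witness, while the remaining $16\ell+4$ coordinates already decompose into $4\ell+1$ copies of $D$ in the fixed order and thus contribute the same blocks to either permutation.

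For the counting step, I invoke Lemma~\ref{lem:minimum_degree}: every vertex lies in at least $(1/32-2c)n^3$ copies of $D$, so the number of ordered $4$-tuples in $G$ inducing $D$ is at least $c_1 n^4$ for some constant $c_1=c_1(c)>0$. Since forbidding the at most $20\ell+3$ vertices already used kills only $O(n^3)$ such $4$-tuples, at each of the $4\ell+1$ greedy extension steps there remain at least $c_1 n^4/2$ valid choices of the next block. Multiplying out gives at least $c_2(\ell)\, n^{16\ell+4}$ extensions per starting tuple, hence a total of at least $\beta'\,c_2(\ell)\, n^{20\ell+3}$ ordered $(20\ell+3)$-tuples; by the parameter choice $\xi < \mu^2(\beta')^{5}$ etc.\ set at the start of the proof, this is at least $\xi n^{20\ell+3}$.

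The main obstacle is bookkeeping rather than any genuine combinatorial difficulty. I need to ensure no overcounting, which is immediate because the first $4\ell-1$ coordinates of a concatenated tuple uniquely recover the starting ordered tuple. I must also check that the greedy loss estimate is sharp enough and that all the parameters $\beta, \xi, c, c_1, \ell$ fit the inequalities listed at the beginning of the proof of Lemma~\ref{lem:merge}; this is a routine constant chase. Finally, I should verify that the concatenation honestly yields a linking set (not merely a multiset with repeats), which is ensured by explicitly forbidding the already-used vertices in each greedy step.
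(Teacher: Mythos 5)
Your overall construction---choosing a $(4\ell-1)$-tuple witnessing $(H_4,\beta',\ell)$-reachability and appending $4\ell+1$ further ordered $4$-tuples each inducing a copy of $D$---is exactly the paper's construction, and your observation that the concatenation serves as a witness of $(5\ell+1)$-reachability is correct. The issue is with your counting step.

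You invoke Lemma~\ref{lem:minimum_degree} to guarantee that every vertex lies in $\Omega(n^3)$ copies of $D$, hence that $G$ contains $\Omega(n^4)$ copies of $D$. But Lemma~\ref{lem:minimum_degree} requires $\delta^0(G)\ge(1/2-c)n$, and no such minimum semi-degree hypothesis appears in the statement of Lemma~\ref{lem:merge}: the only hypotheses available are $0<\beta'\le 1/d$, $\mu_4>0$, the existence of a $(H_4,\beta',\ell)$-closed partition $\CP$, and the presence of a robust $2$-transferral. Consequently, the constant you call $c_1(c)$ has no value you are entitled to, and the final inequality ``$\beta' c_2(\ell) n^{20\ell+3}\ge\xi n^{20\ell+3}$'' cannot be closed against the paper's choice of $\xi$, whose constraints (e.g.\ $\xi<6(\beta')^{8\ell+3}$) involve only $\beta'$, $\ell$, and $\mu_4$ and have no dependence on any degree-threshold parameter. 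The paper instead obtains the lower bound on the number of copies of $D$ directly from the closedness hypothesis: picking a part $V_q$ of $\CP$ with $|V_q|\ge n/d\ge\beta' n$, each vertex $v\in V_q$ is in at least $\beta' n^3$ copies of $D$ because the $\beta' n^{4\ell-1}$ many $(H_4,v,w)$-linking $(4\ell-1)$-sets each exhibit a copy of $D$ through $v$ and at most $n^{4\ell-4}$ of them can exhibit the same copy; this gives $d(V(G))\ge(\beta')^2 n^4/4$ purely from the lemma's own hypotheses. Replace your appeal to Lemma~\ref{lem:minimum_degree} by this argument and the rest of your proof goes through.
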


\begin{claimproof}
    Let $\CT$ be the set of ordered $(20\ell+3)$-tuples $(x_1, \dots, x_{20\ell+3}$ such that 
    \begin{itemize}
        \item $\{x_1,\dots,x_{4\ell-1}\}$ is $(H_4,u,v)$-linking, and 
        \item $G[\{x_{4j},x_{4j+1},x_{4j+2},x_{4j+3}\}]$ contains a copy of $D$ for every $\ell \le j\le 5\ell$.
    \end{itemize}

    There are $\beta'n^{4\ell-1}$ ways to choose the prefix of the tuple since $u,v$ are $(H_4,\beta',\ell)$-linking, and $(24d(V(G)))^{4\ell+1}$ ways to choose the remaining vertices.
    Within the partition $\CP$, pick some $V_q$ with $|V_q|\ge n/d \ge \beta' n$.
    Since $V_q$ is $(H_4,\beta',\ell)$-closed, each $v\in V_q$ is contained in at least $\beta'n^3$ many copies of $D$.
    Then, $24d(V(G))\ge (3!)(\beta' n)(\beta'n^3)=6(\beta')^2n^4$.
    We conclude that 
    $$|\CT|\ge \beta'n^{4\ell-1}\cdot (6(\beta')^2n^4)^{4\ell+1} > \xi n^{20\ell+3},$$
    which implies $u, v$ are $(H_4,\beta,5\ell+1)$-reachable.
\end{claimproof}

\begin{claim}
    If $\mathbf{u}_i - \mathbf{u}_j \in L_\CP^{\mu_4}(H_4)$, $u_0\in V_i$ and $v_4\in V_j$, then $u_0,v_4$ are $(H,\beta,5\ell+1)$-reachable.
\end{claim}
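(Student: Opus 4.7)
Since $\mathbf{u}_i - \mathbf{u}_j \in L_\CP^{\mu_4}(H_4)$ is a $2$-transferral, fix $\mathbf{v}_1, \mathbf{v}_2 \in I_\CP^{\mu_4}(H_4)$ with $\mathbf{v}_1 - \mathbf{v}_2 = \mathbf{u}_i - \mathbf{u}_j$, and set $\mathbf{w} \coloneqq \mathbf{v}_1 - \mathbf{u}_i = \mathbf{v}_2 - \mathbf{u}_j$, a non-negative integer vector whose entries sum to $3$. By averaging, at least $|V_i|/2 = \Omega(n)$ vertices $u_0' \in V_i$ lie in at least $\mu_4 n^3/2$ copies of $D$ of index $\mathbf{v}_1$ (call these $\mathbf{v}_1$\emph{-good}); $\mathbf{v}_2$-good vertices $v_4' \in V_j$ are defined analogously.

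The plan is to build the linking set out of two ``transferring'' $D$-copies together with five linking $(4\ell-1)$-sets provided by the $(H_4,\beta',\ell)$-closedness of the parts of $\CP$. Pick a $\mathbf{v}_1$-good $u_0' \in V_i \setminus \{u_0\}$ and a $\mathbf{v}_2$-good $v_4' \in V_j \setminus \{v_4\}$, then copies $D_1 = \{u_0', a_1, a_2, a_3\}$ of index $\mathbf{v}_1$ and $D_2 = \{v_4', b_1, b_2, b_3\}$ of index $\mathbf{v}_2$. Since $\{a_1, a_2, a_3\}$ and $\{b_1, b_2, b_3\}$ share the index $\mathbf{w}$, relabel so that $a_r$ and $b_r$ lie in a common part $V_{k_r}$ for each $r$. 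Using the closedness of $V_i$, $V_j$, and each $V_{k_r}$, pick pairwise disjoint linking sets $L$ (between $u_0$ and $u_0'$), $L'$ (between $v_4$ and $v_4'$), and $L_r^{\mathrm{mid}}$ (between $a_r$ and $b_r$, $r\in[3]$), all disjoint from the eight anchor vertices. Set $S \coloneqq L \cup L' \cup L_1^{\mathrm{mid}} \cup L_2^{\mathrm{mid}} \cup L_3^{\mathrm{mid}} \cup D_1 \cup D_2$; then $|S| = 5(4\ell-1) + 8 = 20\ell+3$. The linking property is witnessed by the decompositions
$$ S \cup \{u_0\} = (L \cup \{u_0\}) \sqcup D_1 \sqcup \bigsqcup_{r=1}^{3}(L_r^{\mathrm{mid}} \cup \{b_r\}) \sqcup (L' \cup \{v_4'\}), $$
$$ S \cup \{v_4\} = (L' \cup \{v_4\}) \sqcup D_2 \sqcup \bigsqcup_{r=1}^{3}(L_r^{\mathrm{mid}} \cup \{a_r\}) \sqcup (L \cup \{u_0'\}), $$
in which every piece admits a perfect $D$-tiling by the defining property of its linking set ($D_1, D_2$ are themselves $D$-copies).

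Counting the choices independently gives $\Omega(n)$ for $u_0'$, $\Omega(\mu_4 n^3)$ for $D_1$ given $u_0'$, the same bounds for $(v_4', D_2)$, and $\Omega(\beta' n^{4\ell-1})$ for each of the five linking sets; the disjointness conditions cost only a $(1-o(1))$ factor since each forbidden vertex removes only an $O(n^{4\ell-2})$ fraction from the corresponding count. This yields $\Omega(\mu_4^2(\beta')^5 n^{20\ell+3})$ unordered linking sets, and after multiplication by the $(20\ell+3)!$ orderings of each set, comfortably exceeds the required $\xi n^{20\ell+3}$ ordered tuples under the assumption $\xi < \mu^2(\beta')^5$. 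The conceptual heart of the argument is the symmetric swap in the two displayed tilings: each $L_r^{\mathrm{mid}}$ is completed by $b_r$ in the first tiling but by $a_r$ in the second, while $u_0'$ and $v_4'$ swap roles between $D_1$ and $L' \cup \{v_4'\}$ versus $D_2$ and $L \cup \{u_0'\}$; this is precisely the transfer made possible by $\mathbf{v}_1 - \mathbf{v}_2 = \mathbf{u}_i - \mathbf{u}_j$. The remainder is bookkeeping, with the minor case where some $V_{k_r}$ coincides with $V_i$ or $V_j$ absorbed by the abundant count of each linking set.
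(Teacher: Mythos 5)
Your proof is correct and takes essentially the same approach as the paper: both arguments choose two $D$-copies whose index vectors $\mathbf{v}_1,\mathbf{v}_2\in I_\CP^{\mu_4}(H_4)$ witness the $2$-transferral and differ by $\mathbf{u}_i-\mathbf{u}_j$, pair the remaining three vertices of each copy part-by-part via the common vector $\mathbf{w}$, and bridge the two copies with five $(H_4,\beta',\ell)$-linking sets between vertices in the same part (the endpoints $u_0,u_0'$ in $V_i$, $v_4,v_4'$ in $V_j$, and $a_r,b_r$ for $r\in[3]$); the "swap" between the two displayed perfect matchings is exactly the paper's construction with $D_1$ playing the role of $\{v_0,v_1,v_2,v_3\}$ and $D_2$ of $\{u_1,u_2,u_3,u_4\}$. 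Two minor quibbles that don't affect validity: the averaging step gives $\Omega(\mu_4 n)$ (not $|V_i|/2$) $\mathbf{v}_1$-good vertices, and in fact the paper avoids this intermediate step by directly counting the $\mu_4 n^4$ choices of each transferring copy; and the final remark about "multiplying by $(20\ell+3)!$" is stated in the wrong direction — the definition demands $\beta n^{20\ell+3}$ \emph{unordered} linking sets, so once you have $\Omega(\mu_4^2(\beta')^5 n^{20\ell+3})$ choices you should observe each unordered set arises from at most $O_\ell(1)$ of them (or, as the paper does, count ordered tuples first and then divide by $(20\ell+3)!<1/\xi$, absorbed by the hypothesis $\beta<\xi^2/2$).
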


\begin{claimproof}
    Since $\mathbf{u}_i - \mathbf{u}_j \in L_\CP^{\mu_4}(H_4)$, there are $A,B,C$ such that $d(V_i,A,B,C)$ and $d(A,B,C,V_j)$ are both at least $\mu_4 n^4$.
    Let $\CT$ be the set of $(20\ell+3)$-tuples
    $$(v_0,v_1,v_2,v_3,u_1,u_2,u_3,u_4,w_1,\dots,w_{20\ell-5})$$ satisfying
    \begin{itemize}
        \item $G[v_0,v_1,v_2,v_3]$ is a copy of $D$ with $v_0\in V_i,v_1\in A, v_2\in B, v_3\in C$.\;
        \item $G[u_1,u_2,u_3,u_4]$ is a copy of $D$ with $u_1\in A, u_2\in B, u_3\in C,u_4\in V_j$.\;
        \item $\{w_{i(4\ell-1)+1},\dots,w_{(i+1)(4\ell-1)}\}$ is $(H_4,u_i,v_i)$-linking $(4\ell-1)$ set for $i\in\{0,1,2,3,4\}$.
    \end{itemize}
    We may count $|\CT|\ge (\mu n^4)^2\cdot (\beta' n^{4\ell-1})^5\ge \xi n^{20\ell +3}$, so $u_0$ and $v_4$ are $(H_4,\beta,5\ell+1)$-reachable.
\end{claimproof}

\end{document}